\newtheorem{thm}{Theorem}[section]
\newtheorem{prop}{Proposition}[section]
\newtheorem{cor}{Corollary}[section]
\newtheorem{rem}{Remark}[section]
\def\<{\left<}\def\>{\right>}
\def\({\left(}\def\){\right)}
\begin{document}

\title{ Draw-down Parisian ruin for spectrally negative L\'{e}vy process}
\author{Wenyuan Wang and  Xiaowen Zhou}

\address{Wenyuan Wang: School of Mathematical Sciences, Xiamen University, Fujian 361005, People's Republic of China;
Email address: wwywang@xmu.edu.cn.\\
Xiaowen Zhou: Department of Mathematics and Statistics, Concordia University, Montreal, Quebec, Canada H3G 1M8; Email address:  xiaowen.zhou@concordia.ca.}

%\email{libo@nankai.edu.cn, linh_vn12@yahoo.com and xiaowen.zhou@concordia.ca}
\subjclass[2000]{Primary: 60G51; Secondary: 60E10, 60J35}
\keywords{Spectrally negative L\'{e}vy process, reflected process, Parisian  ruin, draw-down, draw-down Parisian ruin, potential measure, excursion theory, risk process.}

\begin{abstract}
In this paper we study the draw-down related Parisian ruin problem  for spectrally negative L\'{e}vy risk processes. We first introduce the draw-down Parisian ruin time and solve the corresponding two-sided exit problems via excursion theory. We also  find an expression of the potential measure for the process killed at the draw-down Parisian time. As applications, we obtained new results  for spectrally negative L\'{e}vy risk process with dividend barrier and with Parisian ruin.
\end{abstract}

\maketitle

\section{Introduction }
\setcounter{section}{1}

The concept of Parisian stopping time was first proposed in Chesney and Jeanblanc (1997) for option pricing in mathematical finance. Dassios and Wu (2009a, 2009b) later introduced Parisian ruin time for  linear Brownian motion and the Cramer-Lundberg risk processes to model the ruin problem with implemental delay, where expressions for the Parisian ruin probability were provided. Intuitively, for the risk process the Parisian ruin time is the first time  until when the surplus process has stayed  below level $0$ continuously during  a time period with a pre-determined duration of length $r$.

The Parisian ruin problem has since been studied extensively under the framework of spectrally negative L\'evy processes. By considering the spectrally negative L\'evy processes of bounded and unbounded variation, Czarna and Palmowski (2011) found the respective expressions for the Parisian ruin probability.
Loeffen et al. (2013) re-visited the Parisian ruin probability and provided an expression which is considerably simpler than the one of Czarna and Palmowski (2011), and unifies the results for spectrally negative L\'{e}vy processes of bounded and unbounded variation. In Lkabous et al. (2017), the result of Loeffen et al. (2013) was further extended to the  refracted L\'{e}vy processes. The Parisian ruin related dividend optimization problem was investigated in Czarna and Palmowski (2014), where the barrier dividend strategy turned out to be the optimal strategy.
Works on a variant  of the above model in which the duration $r$ is random can be found in Landriault et al. (2014), Baurdoux et al. (2016) and Frostig et al. (2019). Recent work concerning the Parisian ruin with an ultimate bankruptcy level  can be found in Czarna (2016), Czarna and Renaud (2016) and  Cheung and Wong (2017).

 The results on Parisian ruin are often expressed using the scale functions and the marginal density for the spectrally negative L\'evy process. The approaches in the previous literature on the Parisian ruin for L\'evy risk processes typically involve  arguments of  fluctuation identities if the  underlying L\'evy process has sample paths of bounded variation.
  Approximation and limiting arguments are further needed to handle the case of unbounded variation.

More recently, in Loeffen et al. (2018) a novel approach is adopted by connecting the desired Parisian ruin fluctuation quantity with solution to the  Kolmogorov forward equation for spectrally negative L\'evy process to find  the joint Laplace
transform of the Parisian ruin time and the Parisian ruin position, and an expression of the $q$-potential measure of the process
killed at the Parisian ruin time.

Since the Parisian ruin is defined using excursions of the underlying process, one would expect the excursion theory to play a role in its investigation. But we are not aware of previous study of Parisian ruin problems via excursion theory.

A general draw-down time for a stochastic process is a downward first passage time that depends on the previous supreme of the process. It generalizes the classical ruin time and helps to understand the path dependent relative downward fluctuations from the previous supreme for the underlying process.

 The draw-down time was first studied for diffusions in Lehoczky (1977). Some early work on draw-down time for spectrally negative L\'evy processes can be found in Pistorius (2007). In Avram et al. (2017) the draw-down exit problems were studied for taxed spectrally negative L\'evy processes using  both excursion theory and an approximation approach. More recent fluctuation results concerning the draw-down times for spectrally negative L\'evy processes such as the associated joint distribution, the potential measure and the creeping behaviors were obtained in Li et al. (2017) via  excursion theory. Many ruin time related results for  spectrally negative L\'evy risk processes can be generalized to the associated draw-down time setting, and at the same time, the obtained expressions are in terms of scale functions that remain semi-explicit. We refer to Wang and Zhou (2018) for recent work on draw-down reflected spectrally negative L\'evy processes.

 Given the previous results on both the Parisian ruin probability and the draw-down time, it comes naturally   to introduce the draw-down feature to the Parisian ruin problem for spectrally negative L\'evy risk processes. In this way the Parisian ruins  can be associated to the historical high of the process, which allows to pose more elaborate  Parisian ruin problem  and  helps to better understand the fluctuation  behaviors for Parisian ruin.
 In this paper we are going to implement this idea and generalize the known results on Parisian ruin time  to those concerning the draw-down Parisian ruin time.

 %Given the previous results on the Parisian ruin probability and the draw-down time, it is natural to introduce the draw-down feature to the Parisian ruin %problem for spectrally negative L\'evy risk processes.
 %In this paper we implement this idea and generalize the known results for Parisian ruin time  to those concerning draw-down Parisian ruin time.

 More precisely, for spectrally negative L\'evy risk processes we  find solutions to the two-sided exit problems associated to the draw-down Parisian ruin times. We also find an expression of potential measure associated to the draw-down Parisian ruin time. We also obtain recursive expressions for moments of accumulated time discounted  increments of the running  supreme up to the draw-down Parisian ruin time. As  applications, we recover a previous result and obtain new  results on Parisian ruin for a spectrally negative L\'evy risk process with a constant dividend barrier.

  To show the main results, we adopt the excursion theory approach that we find very handy for the draw-down fluctuation arguments for spectrally negative L\'evy processes.   To this end, we first identify the associated  exit quantity  under the excursion measure for the excursion process of reflected spectrally negative L\'evy process from its running supreme.  Since the draw-down related quantities can be expressed using the excursion process,   the desired results then follow from compensation formulas. A similar approach can be found in Li et al. (2017). To our best knowledge, this paper represents the first attempt of applying the excursion theory in the study of Parisian ruin problems.

The rest of the paper is arranged as follows. After the introduction in Section 1, in Section 2 we briefly review the spectrally negative L\'evy process, the associated scale functions, the draw-down time and several draw-down fluctuation results. The excursion process of the spectrally negative L\'evy process reflected from its previous supreme together with results on the excursion measure related to the Parisian ruin time are introduced in Section 3. The main results and proofs are contained in Section 4. The main results are applied to the spectrally negative L\'evy process with Parisian ruin and dividend barrier is presented  in Section 5 to recover previous results and to obtain new results.

\section{Preliminaries on spectrally negative L\'{e}vy processes and Parisian ruin problem}
\setcounter{section}{2}

We first briefly introduce the spectrally negative L\'{e}vy processes, the associated scale functions and some fluctuation
identities.
Write $X\equiv\{X(t);t\geq0\}$, defined on a probability space with probability  laws $\{\mathbb{P}_{x};x\in(-\infty,\infty)\}$ and natural filtration $\{\mathcal{F}_{t};t\geq0\}$, for a spectrally negative L\'{e}vy process that is not a purely increasing linear drift or the negative of a subordinator. Denote its running supreme process by
\[\bar{X}(t):=\sup\limits_{0\leq s\leq t}X(s), \,\,\, t\geq0.\]

The Laplace exponent of $X$ is given by
\begin{eqnarray}
\psi(\theta):=\ln \mathbb{E}_{x}\left(\mathrm{e}^{\theta (X(1)-x)}\right)=\gamma\theta+\frac{1}{2}\sigma^{2}\theta^{2}-\int_{(0,\infty)}\left(1-\mathrm{e}^{-\theta x}-\theta x\mathbf{1}_{(0,1)}(x)\right)\nu(\mathrm{d}x),\nonumber
\end{eqnarray}
where   the L\'{e}vy measure $\nu$  satisfies  $\int_{(0,\infty)}\left(1\wedge x^{2}\right)\nu(\mathrm{d}x)<\infty$.
It is known that $\psi(\theta)$ is finite for  $\theta\in[0,\infty)$, and it is strictly convex and infinitely differentiable.
As in Bertoin (1996), the $q$-scale functions $\{W^{(q)};q\geq0\}$ of $X$ are defined as follows. For each $q\geq0$, $W^{(q)}:\,[0,\infty)\rightarrow[0,\infty)$ is the unique strictly increasing and continuous function with Laplace transform
\begin{eqnarray}
\int_{0}^{\infty}\mathrm{e}^{-\theta x}W^{(q)}(x)\mathrm{d}x=\frac{1}{\psi(\theta)-q},\quad \mbox{for }\theta>\Phi_{q},\nonumber
\end{eqnarray}
where $\Phi_{q}$ is the largest solution of the equation $\psi(\theta)=q$. Further define $W^{(q)}(x)=0 $ for $x<0$, and write $W$ for the $0$-scale function $W^{(0)}$. Note that $W^{(q)}(0)=0 $ if and only if the process $X$ has sample paths of unbounded variation.

For $p, p+q\geq 0$, $y>0$
and $x\in(-\infty,\infty)$, define two more scale functions as
$$W^{(p,q)}_{y}(x):=W^{(p)}(x)+q\int_{y}^{x}W^{(p+q)}(x-w)W^{(p)}(w)\mathrm{d}w,$$
and
$$Z^{(p)}(x):=1+p\int_{0}^{x}W^{(p)}(w)\mathrm{d}w.$$

For any $x\in\mathbb{R}$ and $\vartheta\geq0$, define an exponential change of measure  for the
spectrally negative L\'{e}vy process by
\begin{eqnarray}
\left.\frac{\mathrm{d}\mathbb{P}_{x}^{\vartheta}}{\mathrm{d}\mathbb{P}_{x}}
\right|_{\mathcal{F}_{t}}=\mathrm{e}^{\vartheta(X(t)-x)-\psi(\vartheta)t}.\nonumber
\end{eqnarray}
Furthermore, under the probability measures $\mathbb{P}_{x}^{\vartheta}$, process $X$ remains a spectrally negative L\'{e}vy
process. From now on we  denote by  $W_{\vartheta}^{(q)}$ and $W_{\vartheta}$  the $q$-scale function and $0$-scale function, respectively,  under the measure $\mathbb{P}_{x}^{\vartheta}$.

For the process $X$, define its first up-crossing time and down-crossing time of level $a\in(-\infty,\infty)$, respectively, by
\begin{eqnarray}
\tau^{+}_{a}:=\inf\{t\geq0: X(t)>a\}\,\,\, \text{and}\,\,\, \tau_{a}^{-}:=\inf\{t\geq0: X(t)<a\}.\nonumber
\end{eqnarray}
It can be found in Kyprianou (2014) that
\begin{eqnarray}\label{t.s.0}
\mathbb{E}_{x}\left(\mathrm{e}^{-q \tau_{a}^{+}}\mathbf{1}_{\{\tau_{a}^{+}<\tau_{0}^{-}\}}\right)
= \frac{W^{(q)}(x)}{W^{(q)}(a)}, \quad x\in(-\infty,a].
\end{eqnarray}
In addition, it follows from Zhou (2007) that
\begin{eqnarray}\label{t.s.limit}
\lim\limits_{x\rightarrow\infty}\frac{W^{(q)\prime}(x)}{W^{(q)}(x)}=\Phi_{q}\,\,\, \text{and} \,\,\, \lim\limits_{y\rightarrow\infty}\frac{W^{(q)}(x+y)}{W^{(q)}(y)}=\mathrm{e}^{\Phi_{q}x}.
\end{eqnarray}

A function $\xi: (-\infty,\infty)\rightarrow (-\infty,\infty)$ is called a draw-down function if  $\xi(x)< x$ for all the values of $x$ that are of concern.
Define  the $\xi$-draw-down time $\tau_{\xi}$ of $X$ as
$$\tau_{\xi}:=\inf\{t\geq0: X(t)<\xi(\bar{X}(t))\}$$
with the convention that $\inf\emptyset:=\infty$. We call $\xi(\bar{X}(\tau_\xi))$ the associated draw-down level.
By Li et al. (2017), we have for $\bar{\xi}(z):=z-\xi(z)$,
\begin{eqnarray}\label{dra.d.t.s.}
\mathbb{E}_{x}\left(\mathrm{e}^{-q\tau_{a}^{+}}\mathbf{1}_{\{\tau_{a}^{+}<\tau_{\xi}\}}\right)
=\exp\left\{-\int_{x}^{a}\frac{W^{(q)\prime}(\bar{\xi}\left(z\right))}
{W^{(q)}(\bar{\xi}\left(z\right))}\mathrm{d}z\right\},\quad x\in(-\infty,a].
\end{eqnarray}

For $r>0$ the Parisian ruin time is defined by
$$\kappa_{r}: =\inf\{t>r: t-g_{t}>r\} \,\,\text{with} \,\, \inf\emptyset:=\infty, $$
where
$$ g_{t}: =\sup\{0\leq s \leq t: X(s) \geq0\} \,\,\text{with} \,\, \sup\emptyset:=0.$$
Given the draw-down function $\xi$, we define the $\xi$-draw-down Parisian ruin time of $X$ as
$$\kappa_{r}^{\xi}: =\inf\{t>r: t-g_{t}^{\xi}>r\} \,\,\text{with} \,\, \inf\emptyset:=\infty,$$
where
$$g_{t}^{\xi}: =\sup\{0\leq s \leq t: X(s) \geq\xi(\bar{X}(s))\}\,\,\text{with} \,\, \sup\emptyset:=0.$$
%In particular, when $\xi(z)=\xi_{0}(z):=z$ we have
%$$\kappa_{r}^{\xi_{0}}: =\inf\{t>r: t-g_{t}^{\xi_{0}}>r\}, \,\,\mbox{ where }\,\, g_{t}^{\xi_{0}}: =\sup\{0\leq s \leq t:\bar{X}(s) -X(s)=0\},$$
%which represents the first time when there is an excursion away from 0 (of the reflected process $\{\bar{X}(t)-X(t);t\geq0\}$) with excursion length greater than $r$.

From Loeffen et al. (2013) and Czarna and Palmowski (2014), we have
\begin{eqnarray}\label{par.ruin.p.}
\mathbb{P}_{x}\left(\kappa_{r}<\infty\right)
=1-\mathbb{E}\left(X(1)\right)
\frac{\int_{0}^{\infty}W(x+z)z\mathbb{P}\left(X(r)\in\mathrm{d}z\right)}
{\int_{0}^{\infty}z\mathbb{P}\left(X(r)\in\mathrm{d}z\right)},\quad x\in(-\infty,\infty),
\nonumber
\end{eqnarray}
and
\begin{eqnarray}\label{two.sid.ex.}
\mathbb{E}_{x}\left(\mathrm{e}^{-q \tau_{a}^{+}}\mathbf{1}_{\{\tau_{a}^{+}<\kappa_{r}\}}\right)
=\frac{\ell_{r}^{(q)}(x)}{\ell_{r}^{(q)}(a)},\quad x\in(-\infty, a],
\end{eqnarray}
where
\begin{eqnarray}\label{}
\ell_{r}^{(q)}(x)\hspace{-0.3cm}&:=&\hspace{-0.3cm}
\int_{0}^{\infty}\mathrm{e}^{-\Phi_{q}z+qr}W^{(q)}(x+z)
\frac{z}{r}\mathbb{P}^{\Phi_{q}}\left(X(r)\in\mathrm{d}z\right)
\nonumber\\
\hspace{-0.3cm}&=&\hspace{-0.3cm}
\int_{0}^{\infty} W^{(q)}(x+z)
\frac{z}{r}\mathbb{P}\left(X(r)\in\mathrm{d}z\right)
.\nonumber
\end{eqnarray}
 Write $\ell_{r}:=\ell_{r}^{(0)}$ for simplicity. Then
\begin{equation}\label{parisian}
\mathbb{P}_{x}\left(\kappa_{r}<\infty\right)
=1-\frac{\mathbb{E}\left(X(1)\right)}
{\int_{0}^{\infty}\frac{z}{r}\mathbb{P}\left(X(r)\in\mathrm{d}z\right)}\ell_{r}(x).\nonumber
\end{equation}

For $b\in(0,\infty)$, let
\begin{eqnarray}\label{Div.pro.}
D(t):=\left(\bar{X}(t)- b\right)\vee0,\quad t\geq0,
\end{eqnarray}
 denote the accumulated amount of dividends paid until time $t$ of the barrier strategy with barrier at level $b$.

In this paper, we are interested in  the following draw-down Parisian ruin time related fluctuation quantities.
\begin{itemize}
\item[(\textbf{i})]
The draw-down Parisian ruin time related two-side exit problem
%$$\mathbb{E}_{x}\left(\mathrm{e}^{-q \tau_{a}^{+}}\mathbf{1}_{\{\tau_{a}^{+}<\kappa_{r}^{\xi}\}}\right),\quad x\in(-\infty,a],\,a\in(-\infty,\infty),$$
%and
$$\mathbb{E}_{x}\left(\mathrm{e}^{-q \tau_{a}^{+}}\mathbf{1}_{\{\tau_{a}^{+}<\kappa_{r}^{\xi}\wedge \tau_{\eta}\}}\right), \quad x\in (-\infty,\infty),\,\,a\in[x,\infty)
,$$
where $\eta$ is another draw-down function such that $\eta(z)<\xi(z)< z$ for all $z\leq a$.
\item[(\textbf{ii})]
The joint Laplace transform involving the draw-down Parisian ruin time, the position of $X$ at the draw-down Parisian ruin time and its running superemum until the draw-down Parisian ruin time
$$
\mathbb{E}_{x}\left(\mathrm{e}^{-q \left(\kappa_{r}^{\xi}-r\right)}\mathrm{e}^{\lambda X(\kappa_{r}^{\xi})-\psi\left(\lambda\right)r}
\varphi(\bar{X}(\kappa_{r}^{\xi}))\mathbf{1}_{\{\kappa_{r}^{\xi}<\tau_{a}^{+}\}}\right)
,\quad x\in(-\infty,a],\,a\in(-\infty,\infty),$$
where $\varphi:\,(-\infty,\infty)\rightarrow(-\infty,\infty)$ is  an arbitrary bounded measurable function.
\item[(\textbf{iii})]
The potential measure of $X$ involving the draw-down Parisian ruin time
$$
\int_{0}^{\infty}\mathrm{e}^{-q \left(t-r\right)}
\mathbb{E}_{x}\left(f(X(t),\bar{X}(t)); \,t<\kappa_{r}^{\xi}\wedge \tau_{a}^{+}\right)
\mathrm{d}t,\quad x\in(-\infty,a],\,a\in(-\infty,\infty),
$$
where $f$ is an arbitrary bounded bivariate function which is differentiable with respect to the first argument.
\item[(\textbf{iv})]
The $k$-th moment of the accumulated  time discounted running supreme up to the draw-down Parisian ruin time
$$V_{k}^{\xi}(x;b):=\mathbb{E}_{x}\left(\left[D_{b}\right]^{k}\right),\quad x\in(-\infty,\infty),\,b\in(0,\infty),$$
with $D_{b}:=D_{\xi,b}:=\int_{0-}^{\kappa^{\xi}_{r}}\mathrm{e}^{-q t}\,\mathrm{d}D(t)$.
Further  let
$V_{k}(x):=V_{k}^{\xi}(x;x),\,\,x\in(-\infty,\infty)$.
For $\xi(x)=(x-b)\vee 0 $ with $b>0$, $D_{b}$ can be interpreted as the accumulated  discounted   dividends paid according to the barrier dividend strategy with barrier at level $b$ until the draw-down Parisian ruin time.
\end{itemize}

We assume  the differentiability of $\ell_{r}^{(q)}$  whenever needed.
In fact, by \eqref{t.s.limit} and the definition of $\ell_{r}^{(q)}$, $\ell_{r}^{(q)}$ inherits the same differentiability from $W^{(q)}$.
It is known that,
when $X$ has sample paths of unbounded variation, or when $X$ has sample paths of bounded variation and the L\'{e}vy measure has no atoms, the scale function $W^{(q)}$ (hence, $\ell_{r}^{(q)}$) is continuously differentiable over $(0, \infty)$ (resp, $(-\infty,\infty)$).
%By Loeffen (2008), if $X$ has a L\'{e}vy measure which has a completely monotone density, then $W^{(q)}$ is twice continuously differentiable over $(0, \infty)$ when $X$ is of unbounded variation.
Moreover, if $X$ has a nontrivial Gaussian component, then $W^{(q)}$ (and hence, $\ell_{r}^{(q)}$) is twice continuously differentiable over $(0, \infty)$ (resp, $(-\infty,\infty)$).
The interested readers are referred to Chan et al. (2011) and Kuznetsov et al. (2012) for more detailed discussions on the smoothness of scale functions.

\section{The excursion process and Parisian related quantities under the excursion measure }
\setcounter{section}{3}
\setcounter{equation}{0}

In this section, we  briefly recall basic  concepts in excursion theory for the reflected process $\{\bar{X}(t)-X(t);t\geq0\}$, and we refer to Bertoin (1996) for more details. We also obtain  new Parisian ruin related results on the excursion measure.

For $x\in(-\infty,\infty)$, the process $\{L(t):= \bar{X}(t)-x, t\geq0\}$ is  a local time at $0$ for
the Markov process $\{\bar{X}(t)-X(t);t\geq0\}$ under $\mathbb{P}_{x}$.
The corresponding inverse local time is defined as
$$L^{-1}(t):=\inf\{s\geq0: L(s)>t\}=\sup\{s\geq0: L(s)\leq t\}.$$
Further, let $L^{-1}(t-):=\lim\limits_{s\uparrow t}L^{-1}(s)$.
Define a Poisson point process $\{(t, \varepsilon_{t}); t\geq0\}$ as
\begin{eqnarray}
\label{ex.pro.ind.by.lo.ti.}
\varepsilon_{t}(s):=X(L^{-1}(t))-X(L^{-1}(t-)+s), \,\,s\in(0,L^{-1}(t)-L^{-1}(t-)],
\end{eqnarray}
whenever the lifetime of $\varepsilon_{t}$ is strictly positive, i.e. $L^{-1}(t)-L^{-1}(t-)>0$.
If  $L^{-1}(t)-L^{-1}(t-)=0 $, define $\varepsilon_{t}:=\Upsilon$ with $\Upsilon$ being an additional isolated point.
It is known  that $\varepsilon$ is a Poisson point process with
characteristic measure $n$
if $\{\bar{X}(t)-X(t);t\geq0\}$ is recurrent; otherwise, $\{\varepsilon_{t}; t\leq L(\infty)\}$ is a Poisson point process stopped at the first excursion of infinite lifetime. Here, $n$ is a $\sigma$-finite measure on the space  $\mathcal{E}$ of excursions,
i.e. the space $\mathcal{E}$  of c\`{a}dl\`{a}g functions $f$ satisfying
$$f:\,(0,\zeta)\rightarrow (0,\infty)\,\,\text{ for some} \,\, \zeta\in(0,\infty] \,\, \text{ and} \,\, f(\zeta)\in (0, \infty) \,\, \text{if}\,\, \zeta<\infty,$$
%\begin{eqnarray}
%&&f:\,(0,\zeta)\rightarrow (0,\infty)\,\quad \mbox{for some } \zeta\in(0,\infty],
%\nonumber\\
%&&f:\,\{\zeta\}\rightarrow (0,\infty)\,\,\,\,\,\,\quad \mbox{if } \zeta<\infty,
%\nonumber
%\end{eqnarray}
where $\zeta\equiv\zeta(f)$ denotes the excursion length or lifetime; see Definition 6.13 of Kyprianou (2006) for the definition of $\mathcal{E}$.
Denote by $\varepsilon(\cdot)$, or $\varepsilon$ for short, a generic excursion
belonging to the space $\mathcal{E}$ of canonical excursions.
The excursion height of a canonical excursion $\varepsilon$ is
denoted by $\bar{\varepsilon}=\sup\limits_{t\in[0,\zeta]}\varepsilon(t)$.

With a little abuse of notation, for $a\in(0,\infty)$ and $t\in[0,\zeta]$, let
$$
g_{t}^{a}(\varepsilon):=
\left\{\begin{array}{ll} \inf\{s\in[0,\zeta]: s\leq t, \varepsilon(w)\geq a \mbox{ for all }w\in[s,t]\}\quad &\mbox{if } \,\varepsilon(t)\geq a,\\
t & \mbox{otherwise}.
\end{array}\right.$$
%\begin{eqnarray}
%&&g_{t}^{a}(\varepsilon):=\inf\{s\in[0,\zeta]: s\leq t, \varepsilon(w)\geq a \mbox{ for all }w\in[s,t]\},\quad \mbox{if } \,\varepsilon(t)\geq a,
%\nonumber\\
%&&g_{t}^{a}(\varepsilon):=t,\,\,\,\,\,\quad\quad\quad\quad\quad\quad\quad\quad\quad\quad\quad\quad\quad\quad\quad\quad\quad\quad\quad\quad \,\mbox{otherwise},
%\nonumber
%\end{eqnarray}
and
$$
d_{t}^{a}(\varepsilon):=
\left\{\begin{array}{ll}\sup\{s\in[0,\zeta]: s\geq t, \varepsilon(w)\geq a \mbox{ for all }w\in[t,s)\}\quad &\mbox{if } \,\varepsilon(t)\geq a,\\
t &\mbox{otherwise}.
\end{array}\right.$$

%\begin{eqnarray}
%&&d_{t}^{a}(\varepsilon):=\sup\{s\in[0,\zeta]: s\geq t, \varepsilon(w)\geq a \mbox{ for all }w\in[t,s)\},\quad \mbox{if } \,\varepsilon(t)\geq a,
%\nonumber\\
%&&d_{t}^{a}(\varepsilon):=t,\,\,\,\,\,\quad\quad\quad\quad\quad\quad\quad\quad\quad\quad\quad\quad\quad\quad\quad\quad\quad\quad\quad\quad %\,\,\,\mbox{otherwise}.
%\nonumber
%\end{eqnarray}
 Write $\zeta_{t}^{a}(\varepsilon):=d_{t}^{a}(\varepsilon)-g_{t}^{a}(\varepsilon)$ for the length of the maximum time interval (containing $t$) when the canonical excursion $\varepsilon$ stays above the level $a$.
Further define
$$\alpha_{a}^{+}(\varepsilon):=\inf\{g_{t}^{a}(\varepsilon): t\in[0,\zeta], \zeta_{t}^{a}(\varepsilon)>r\},$$
with the convention that $\inf\emptyset:=\zeta$. Intuitively, $\alpha_{a}^{+}(\varepsilon)$ is the starting time of the first time interval of length more than $r$ when the excursion path stays continuous above level $a$.

Denote by $\varepsilon_{g}$
the excursion (away from $0$)  with left-end point $g$ for the reflected process $\{\bar{X}(t)-X(t);t\geq0\}$, and $\zeta_{g}$ and $\bar{\varepsilon}_{g}$ denote its lifetime and excursion height, respectively; see Section IV.4 of Bertoin (1996).

\vspace{0.3cm}
The following result gives the excursion measure of the event that there exists a time interval with length at least $r$ during which either the excursion process continuously stays above level $z>0$, or there is an excursion with height strictly greater than $z+y$ for some $y>0$.

\begin{prop}\label{prop3.1}
For any $z,y\in(0,\infty)$, we have
\begin{eqnarray}\label{n.pari.dra.dow.pro.c.0}
n\left(\alpha_{z}^{+}(\varepsilon)<\zeta \mbox{ \emph{or} } \overline{\varepsilon}> z+y\right)
=\frac{W^{\prime}(z)\phi(y,r)+\chi^{\prime}(z,y,r)}{W(z)\phi(y,r)+\chi(z,y,r)},
\end{eqnarray}
where the derivative  of $\chi$ is with respect to the first argument, and
the Laplace transforms of $\phi(y,r)$ and $\chi(x,y,r)$ \emph{(}in $r$\emph{)} are given, respectively, by
\begin{eqnarray}\label{lap.1}
\,\,\int_{0}^{\infty}\mathrm{e}^{-\theta r}\chi(x,y,r)\,\mathrm{d}r
=\frac{1}{\theta}\left(\frac{W_{y}^{(\theta,-\theta)}(x+y)}{W^{(\theta)}(y)}
-\frac{W(x)
Z^{(\theta)}(y)}{W^{(\theta)}(y)}\right)
,\nonumber
\end{eqnarray}
and
\begin{eqnarray}\label{lap.2}
\int_{0}^{\infty}\mathrm{e}^{-\theta r}\phi(y,r)\,\mathrm{d}r
=
\frac{
Z^{(\theta)}(y)}
{\theta W^{(\theta)}(y)}
.\nonumber
\end{eqnarray}
\end{prop}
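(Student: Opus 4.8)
The plan is to compute the two quantities $n(\alpha_z^+(\varepsilon)<\zeta)$ and $n(\overline{\varepsilon}>z+y)$-type contributions jointly by a single excursion-measure decomposition, relying on the Markov property of the excursion under $n$ at the first up-crossing of level $z$. First I would recall from Bertoin (1996) that the excursion process of $\bar X - X$ from its supremum, when restricted below a level, is governed by $W$, and more precisely that the $q=0$ excursion measure satisfies $n(\overline\varepsilon > z) = W'(z)/W(z)$ — this is the classical identity one recovers by letting $y\to 0$ (with $\phi(y,r),\chi(z,y,r)$ degenerating appropriately). So the right-hand side of \eqref{n.pari.dra.dow.pro.c.0} should be read as a perturbation of $W'(z)/W(z)$: the numerator $W'(z)\phi + \chi'$ and denominator $W(z)\phi + \chi$ both reduce to the pure-drawdown case when the Parisian clause is vacuous.

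The key steps, in order. (1) Decompose the event $\{\alpha_z^+(\varepsilon)<\zeta \text{ or }\overline\varepsilon>z+y\}$ according to whether the canonical excursion $\varepsilon$ ever reaches level $z$; on $\{\overline\varepsilon\le z\}$ the event is empty, so only excursions with $\overline\varepsilon>z$ contribute, and these occur at $n$-rate governed by the first passage of $\varepsilon$ to $z$. (2) At the first time $\varepsilon$ hits $z$, apply the strong Markov property under $n$: the post-$z$ piece of the excursion behaves like a spectrally negative L\'evy process started at $z$ (i.e. like $X$ under $\mathbb P_z$, but killed when $\varepsilon$ returns to $0$, which corresponds to $X$ down-crossing $\bar X(L^{-1})$ — equivalently to the reflected excursion ending). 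The event $\{\alpha_z^+<\zeta \text{ or }\overline\varepsilon>z+y\}$ then translates, under $\mathbb P_z$, into: either there is an interval of length $>r$ spent above $z$ before returning to $0$, or the process exceeds $z+y$ before returning to $0$ — this is exactly a Parisian-ruin-before-exit quantity for a spectrally negative L\'evy process on the interval-type domain, whose Laplace transform in $r$ is expressible through $W_y^{(\theta,-\theta)}$, $W^{(\theta)}$ and $Z^{(\theta)}$, which is where the displayed formulas for $\phi$ and $\chi$ come from. (3) Combine: $n(\text{event}) = n(\overline\varepsilon>z)\cdot\mathbb P_z(\text{Parisian-or-overshoot before return})$ plus the contribution from excursions killed by returning to $0$ before the Parisian clock fires; organizing the renewal/geometric structure of repeated up-crossings of $z$ within a single excursion (the excursion may dip back toward $0$ and rise above $z$ again) produces the ratio form, with denominator $W(z)\phi+\chi$ collecting the "survival" probability and numerator $W'(z)\phi+\chi'$ its logarithmic-type derivative. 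I would establish the Laplace-transform identities for $\phi$ and $\chi$ by invoking \eqref{two.sid.ex.} together with the Kolmogorov-forward-equation expression of Loeffen et al. (2018) for Parisian quantities, or alternatively by a direct excursion-within-excursion argument and the two-sided exit identity \eqref{t.s.0}.

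The main obstacle I expect is step (2)–(3): correctly handling the \emph{nested} excursion structure. An excursion $\varepsilon$ of $\bar X - X$ that reaches above $z$ may itself perform several sub-excursions above and below $z$ before its lifetime $\zeta$ ends, and the Parisian clock for "time spent above $z$" only counts the sub-intervals above $z$, measured \emph{within} this single excursion $\varepsilon$; one must carefully re-express $\alpha_z^+(\varepsilon)$ in terms of the path of $X$ after the first up-crossing of $z$ and track the event that the Parisian period of length $r$ is completed before $\varepsilon$ terminates (i.e. before $X$ drops back to the running supremum, which under the shifted law is the event $\tau_0^-$ for the shifted process). Getting the bookkeeping right so that the geometric series of "fail, dip below $z$ but not to $0$, try again" sums to exactly the claimed rational function — and confirming that differentiating in $z$ acts only on the $W(z)$ and $\chi(z,y,r)$ factors as written — is the delicate part; the rest is assembling known scale-function identities. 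Once the ratio structure is in place, the reduction to the stated Laplace transforms of $\phi$ and $\chi$ follows from \eqref{two.sid.ex.} and standard manipulations with $W^{(p,q)}_y$ and $Z^{(p)}$.
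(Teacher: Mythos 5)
There is a genuine gap: what you have written is a plan whose decisive step is exactly the part you flag as ``delicate'' and then leave undone. Your route requires a within-excursion analysis under $n$, and your step (3) already contains a concrete error: the factorization $n(\text{event})=n(\overline{\varepsilon}>z)\cdot\mathbb{P}_{z}(\cdots)$ pretends that the excursion enters $(z,\infty)$ at the point $z$. Since $X$ is spectrally negative, $\varepsilon=\bar{X}-X$ has \emph{upward} jumps, so the first passage of $\varepsilon$ over $z$ generically occurs with an overshoot; to carry out your program you would need the entry (overshoot) law over level $z$ under $n$, and then Parisian-with-lower-barrier exit probabilities for $X$ started strictly below $0$ (with the clock already running), integrated against that law and summed over the renewal structure of repeated re-entries above $z$. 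None of this is supplied, and it is not a technicality: it is precisely where the functions $\chi(\cdot,y,r)$, as opposed to just $W$, enter. Moreover, $\phi$ and $\chi$ are pinned down in the statement only through their Laplace transforms in $r$, and your proposed sources for these transforms, \eqref{two.sid.ex.} and the results of Loeffen et al.\ (2018), involve no lower barrier $-y$; the joint event with $\tau_{-y}^{-}$ is exactly what produces $\phi$ and $\chi$, so those references cannot close the gap.

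For comparison, the paper avoids any decomposition inside a single excursion. It quotes Theorem 1 of Czarna and Renaud (2016), which gives $1-\mathbb{P}_{x}(\kappa_{r}\wedge\tau_{-y}^{-}<\infty)=\mathbb{E}(X(1))\bigl(W(x)+\chi(x,y,r)/\phi(y,r)\bigr)$ with $\phi,\chi$ having precisely the stated Laplace transforms (this is the external input your sketch lacks); the strong Markov property then yields
$\mathbb{P}_{x}\bigl(\tau_{a}^{+}<\kappa_{r}\wedge\tau_{-y}^{-}\bigr)
=\bigl(W(x)\phi(y,r)+\chi(x,y,r)\bigr)/\bigl(W(a)\phi(y,r)+\chi(a,y,r)\bigr)$ as in \eqref{pro.xi.c.}; finally, writing the same probability as the exponential formula for the Poisson point process of excursions indexed by local time, as in \eqref{pari.dra.dow.pro.c.0}, identifies $n\bigl(\alpha_{z}^{+}(\varepsilon)<\zeta \mbox{ or } \overline{\varepsilon}>z+y\bigr)$ as the negative logarithmic derivative in the upper level, which is \eqref{n.pari.dra.dow.pro.c.0}. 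The logarithmic-derivative structure you correctly intuit arises \emph{across} excursions via this product/exponential formula, not from a geometric series within one excursion; if you replace your steps (2)--(3) by the two-sided exit identity with lower barrier $-y$ (either quoted from Czarna--Renaud or re-derived), your argument collapses to the paper's proof.
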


\begin{proof}
It follows from Theorem 1 of Czarna and Renaud (2016) that
\begin{eqnarray}
\label{pro.c.}
1-\mathbb{P}_{x}\left(\kappa_{r}\wedge \tau_{-y}^{-}<\infty\right)
=\mathbb{E}\left(X(1)\right)\left(W(x)+\frac{\chi(x,y,r)}{\phi(y,r)}\right),
\end{eqnarray}
for any fixed positive $y$.
By the strong Markov property we have for $a>x$,
\begin{eqnarray}
\label{}
\mathbb{P}_{x}\left(\kappa_{r}\wedge \tau_{-y}^{-}<\infty\right)
\hspace{-0.3cm}&=&\hspace{-0.3cm}
\mathbb{P}_{x}\left(\tau_{a}^{+}<\kappa_{r}\wedge \tau_{-y}^{-}<\infty\right)
+\mathbb{P}_{x}\left(\kappa_{r}\wedge \tau_{-y}^{-}<\tau_{a}^{+}\right)
\nonumber\\
\hspace{-0.3cm}&=&\hspace{-0.3cm}
\mathbb{P}_{x}\left(\tau_{a}^{+}<\kappa_{r}\wedge \tau_{-y}^{-}\right)
\mathbb{P}_{a}\left(\kappa_{r}\wedge \tau_{-y}^{-}<\infty\right)
+1-\mathbb{P}_{x}\left(\tau_{a}^{+}<\kappa_{r}\wedge \tau_{-y}^{-}\right),
\nonumber
\end{eqnarray}
which together with \eqref{pro.c.} implies
\begin{eqnarray}
\label{pro.xi.c.}
\mathbb{P}_{x}\left(\tau_{a}^{+}<\kappa_{r}\wedge \tau_{-y}^{-}\right)
\hspace{-0.3cm}&=&\hspace{-0.3cm}
\frac{1-\mathbb{P}_{x}\left(\kappa_{r}\wedge \tau_{-y}^{-}<\infty\right)}
{1-\mathbb{P}_{a}\left(\kappa_{r}\wedge \tau_{-y}^{-}<\infty\right)}
\nonumber\\
\hspace{-0.3cm}&=&\hspace{-0.3cm}\frac{W(x)\phi(y,r)+\chi(x,y,r)}
{W(a)\phi(y,r)+\chi(a,y,r)}.
\end{eqnarray}
Because $\{(t, \varepsilon_{t}); t\geq0\}$ defined via \eqref{ex.pro.ind.by.lo.ti.} is a Poisson point process with intensity measure $\mathrm{d}t\times\mathrm{d}n$,
we have
\begin{eqnarray}\label{pari.dra.dow.pro.c.0}
\mathbb{P}_{x}\left(\tau_{a}^{+}<\kappa_{r}\wedge \tau_{-y}^{-}\right)
\hspace{-0.3cm}&=&\hspace{-0.3cm}
\mathbb{E}_{x}\left(
\prod\limits_{t\leq a-x}\mathbf{1}_{\{\alpha_{x+t}^{+}(\varepsilon_{t})= \zeta(\varepsilon_{t}),\,\overline{\varepsilon}_{t}\leq x+t+y\}}\right)
\nonumber\\
\hspace{-0.3cm}&=&\hspace{-0.3cm}
\exp\left(-\int_{0}^{a-x}n\left(\alpha_{x+t}^{+}(\varepsilon)<\zeta \mbox{ or } \overline{\varepsilon}> x+t+y\right)\,\mathrm{d}t\right)
\nonumber\\
\hspace{-0.3cm}&=&\hspace{-0.3cm}
\exp\left(-\int_{x}^{a} n\left(\alpha_{w}^{+}(\varepsilon)<\zeta \mbox{ or } \overline{\varepsilon}> w+y\right)\,\mathrm{d}w\right),
\end{eqnarray}
where $\overline{\varepsilon}_{t}$ denotes the excursion height of $\varepsilon_{t}$. Combining \eqref{pro.xi.c.} and \eqref{pari.dra.dow.pro.c.0} yields \eqref{n.pari.dra.dow.pro.c.0}.
\end{proof}

\vspace{0.3cm}
We next prove  a version of Proposition \ref{prop3.1} for  $y=\infty$.

\begin{cor}\label{rem3.1.n}
For any $x\in(0,\infty)$, we have
\[ n\left(\alpha_{x}^{+}(\varepsilon)<\zeta\right)=\frac{\ell_{r}^{\prime}(x)}{\ell_{r}(x)}.  \]
\end{cor}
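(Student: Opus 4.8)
The plan is to obtain Corollary \ref{rem3.1.n} by letting $y\uparrow\infty$ in the identities underlying Proposition \ref{prop3.1}. Indeed $n(\alpha_{x}^{+}(\varepsilon)<\zeta)$ is precisely the ``$y=\infty$'' analogue of the left-hand side of \eqref{n.pari.dra.dow.pro.c.0}, so it is enough to control the limit of the right-hand side. Rather than passing to the limit in the explicit formula \eqref{n.pari.dra.dow.pro.c.0} directly (which would require one to justify interchanging $\lim_{y\to\infty}$ with a $z$-derivative of $\chi$), I would pass to the limit in the two-sided identity \eqref{pari.dra.dow.pro.c.0}, namely
\begin{equation*}
\mathbb{P}_{x}\left(\tau_{a}^{+}<\kappa_{r}\wedge\tau_{-y}^{-}\right)
=\exp\left(-\int_{x}^{a} n\left(\alpha_{w}^{+}(\varepsilon)<\zeta \text{ or } \overline{\varepsilon}> w+y\right)\mathrm{d}w\right),\qquad 0<x\le a.
\end{equation*}

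First I would take $y\uparrow\infty$ on both sides. On the left, the events $\{\tau_{a}^{+}<\kappa_{r}\wedge\tau_{-y}^{-}\}$ increase (the path is bounded below before time $\tau_{a}^{+}$ on $\{\tau_{a}^{+}<\infty\}$, and $\sup_{y}\tau_{-y}^{-}=\infty$), so $\mathbb{P}_{x}(\tau_{a}^{+}<\kappa_{r}\wedge\tau_{-y}^{-})\uparrow\mathbb{P}_{x}(\tau_{a}^{+}<\kappa_{r})=\ell_{r}(x)/\ell_{r}(a)$ by \eqref{two.sid.ex.} with $q=0$. On the right, fix $w>0$: the set $\{\alpha_{w}^{+}(\varepsilon)<\zeta\}\cup\{\overline{\varepsilon}>w+y\}$ decreases in $y$, and since any finite-lifetime excursion is a bounded c\`adl\`ag path, $\{\overline{\varepsilon}=\infty\}$ consists of infinite-lifetime excursions; as $X$ has no positive jumps, such an excursion drifts to $+\infty$ and therefore stays above level $w$ for an infinite (hence $>r$) stretch of time, so $\{\overline{\varepsilon}=\infty\}\subseteq\{\alpha_{w}^{+}(\varepsilon)<\zeta\}$. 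Hence $\{\alpha_{w}^{+}(\varepsilon)<\zeta\}\cup\{\overline{\varepsilon}>w+y\}\downarrow\{\alpha_{w}^{+}(\varepsilon)<\zeta\}$; since the measure of the set for $y=1$ is finite by Proposition \ref{prop3.1}, continuity of $n$ from above gives $n(\alpha_{w}^{+}(\varepsilon)<\zeta\text{ or }\overline{\varepsilon}>w+y)\downarrow n(\alpha_{w}^{+}(\varepsilon)<\zeta)$, and dominated convergence then yields
\begin{equation*}
\int_{x}^{a} n\left(\alpha_{w}^{+}(\varepsilon)<\zeta\right)\mathrm{d}w=\ln\ell_{r}(a)-\ln\ell_{r}(x),\qquad 0<x\le a.
\end{equation*}

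To conclude I would differentiate this in $a$. The integrand $w\mapsto n(\alpha_{w}^{+}(\varepsilon)<\zeta)$ is non-increasing, because a sojourn of an excursion above a higher level is a fortiori a sojourn above any lower level, so $\{\alpha_{w}^{+}(\varepsilon)<\zeta\}$ shrinks as $w$ grows. Comparing one-sided difference quotients of the strictly positive, assumed differentiable, function $\ln\ell_{r}$ with this monotone integrand forces the left and right derivatives of $\ln\ell_{r}$ at each point to equal the left and right limits of the integrand; differentiability of $\ell_{r}$ makes these coincide, so $w\mapsto n(\alpha_{w}^{+}(\varepsilon)<\zeta)$ is in fact continuous and equals $(\ln\ell_{r})'$, giving $n(\alpha_{x}^{+}(\varepsilon)<\zeta)=\ell_{r}'(x)/\ell_{r}(x)$ for every $x>0$. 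The main obstacle is the passage to the limit on the right-hand side, and specifically the identity $\lim_{y\to\infty}n(\alpha_{w}^{+}(\varepsilon)<\zeta\text{ or }\overline{\varepsilon}>w+y)=n(\alpha_{w}^{+}(\varepsilon)<\zeta)$: this is the one place where the absence of positive jumps of $X$ is genuinely used, namely to guarantee that the (possibly non-negligible) excursions with $\overline{\varepsilon}=\infty$ are already contained in $\{\alpha_{w}^{+}(\varepsilon)<\zeta\}$. Recovering the integrand from its integral is then a routine consequence of monotonicity and the assumed differentiability of $\ell_{r}$.
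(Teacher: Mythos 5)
Your argument is correct, but it reaches the corollary by a genuinely different route than the paper. The paper's proof works at the level of Laplace transforms in $r$: it uses \eqref{lap.1.new}, the limit \eqref{rem3.1.Lap.1.new} obtained from \eqref{t.s.limit}, and Kendall's identity as in \eqref{la.id.02} to identify $\lim_{y\uparrow\infty}\bigl(W(x)\phi(y,r)+\chi(x,y,r)\bigr)=\ell_{r}(x)$ and, by ``the same arguments,'' $\lim_{y\uparrow\infty}\bigl(W'(x)\phi(y,r)+\chi'(x,y,r)\bigr)=\ell_{r}'(x)$, and then lets $y\uparrow\infty$ in the ratio \eqref{n.pari.dra.dow.pro.c.0}. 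You instead bypass any identification of $\lim_{y}\chi$ and $\lim_{y}\chi'$: you pass to the monotone limit on both sides of \eqref{pari.dra.dow.pro.c.0}, using \eqref{two.sid.ex.} with $q=0$ on the probability side and, on the excursion side, the observation that excursions with $\overline{\varepsilon}=\infty$ have infinite lifetime and are $n$-a.e.\ contained in $\{\alpha_{w}^{+}(\varepsilon)<\zeta\}$ (indeed $n(\zeta=\infty)>0$ only when $X$ drifts to $-\infty$, in which case the infinite excursion tends to $+\infty$), so that continuity from above applies; you then recover the pointwise identity from the integrated one $\int_{x}^{a}n(\alpha_{w}^{+}(\varepsilon)<\zeta)\,\mathrm{d}w=\ln\ell_{r}(a)-\ln\ell_{r}(x)$ by exploiting monotonicity of the integrand in $w$ together with the standing differentiability assumption on $\ell_{r}$ (one-sided derivatives of $\ln\ell_{r}$ equal the one-sided limits of the monotone integrand, which must then coincide). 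What the paper's route buys is explicit limit formulas for $\phi$ and $\chi$ that are reused verbatim, at level $q$, in the proof of the corollary to Theorem \ref{thm00}; what your route buys is a softer argument that needs neither Kendall's identity nor continuity of Laplace transforms, and in particular avoids the derivative-limit step for $\chi'$, which the paper treats rather tersely. Two small points worth tightening in your write-up: the domination for the $w$-integral should be justified by the finiteness of $\int_{x}^{a}n(\alpha_{w}^{+}(\varepsilon)<\zeta\ \mbox{or}\ \overline{\varepsilon}>w+1)\,\mathrm{d}w=-\ln\mathbb{P}_{x}(\tau_{a}^{+}<\kappa_{r}\wedge\tau_{-1}^{-})$, which is finite because $\mathbb{P}_{x}(\tau_{a}^{+}<\tau_{0}^{-})=W(x)/W(a)>0$ for $x>0$ (pointwise finiteness of the $y=1$ measure alone does not give integrability); and the inclusion $\{\overline{\varepsilon}=\infty\}\subseteq\{\alpha_{w}^{+}(\varepsilon)<\zeta\}$ rests on transience to $-\infty$ on the event of an infinite-lifetime excursion rather than on spectral negativity per se.
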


\begin{proof}
By definition we have
\begin{eqnarray}\label{lap.1.new}
\,\,\int_{0}^{\infty}\mathrm{e}^{-\theta r}\left(W(x)\phi(y,r)+\chi(x,y,r)\right)\mathrm{d}r
=\frac{W_{y}^{(\theta,-\theta)}(x+y)}{\theta W^{(\theta)}(y)}
.
\end{eqnarray}
The definition of $W^{(\theta,-\theta)}_{y}(x+y)$ together with \eqref{t.s.limit} yields
\begin{eqnarray}
\label{rem3.1.Lap.1.new}
\lim\limits_{y\uparrow\infty}\frac{W^{(\theta,-\theta)}_{y}(x+y)}{\theta W^{(\theta)}(y)}
\hspace{-0.3cm}&=&\hspace{-0.3cm}
\mathrm{e}^{\Phi_{\theta}x}
\left(\frac{1}{\theta}-\int_{0}^{x}W(w)\,
\mathrm{e}^{-\Phi_{\theta}w}\,\mathrm{d}w\right)
=
\int_{0}^{\infty}W(x+w)\,\mathrm{e}^{-\Phi_{\theta} w}\,\mathrm{d}w
,
\end{eqnarray}
which coincides with the Laplace transform (in $r$) of $\ell_{r}(x)$ as follows.
\begin{eqnarray}\label{la.id.02}
\int_{0}^{\infty}\mathrm{e}^{-\theta r}\ell_{r}(x)\,\mathrm{d}r
\hspace{-0.3cm}&=&\hspace{-0.3cm}
\int_{0}^{\infty}\mathrm{e}^{-\theta r}\int_{0}^{\infty}W(x+z)
\frac{z}{r}\mathbb{P}\left(X(r)\in\mathrm{d}z\right)\,\mathrm{d}r
\nonumber\\
\hspace{-0.3cm}&=&\hspace{-0.3cm}
\int_{0}^{\infty}\mathrm{e}^{-\theta r}\int_{0}^{\infty}W(x+z)
\mathbb{P}(\tau_{z}^{+}\in\mathrm{d}r)\mathrm{d}z
\nonumber\\
\hspace{-0.3cm}&=&\hspace{-0.3cm}
\int_{0}^{\infty}W(x+z)
\mathbb{E}(\mathrm{e}^{-\theta \tau_{z}^{+}})\mathrm{d}z
\nonumber\\
\hspace{-0.3cm}&=&\hspace{-0.3cm}
\int_{0}^{\infty}W(x+w)\,\mathrm{e}^{-\Phi_{\theta} w}\,\mathrm{d}w
,
\end{eqnarray}
where we have used the Kendall's identity
\[\frac{z}{r}\mathbb{P}\left(X(r)\in\mathrm{d}z\right)\,\mathrm{d}r
=\mathbb{P}(\tau_{z}^{+}\in\mathrm{d}r)\mathrm{d}z,\,z,r\geq0.\]
%By \eqref{t.s.limit}, we also have
%\begin{eqnarray}
%\label{rem3.1.Lap.2}
%\lim\limits_{y\uparrow\infty}\frac{
%Z^{(\theta)}(y)}
%{\theta W^{(\theta)}(y)}=\frac{1}{\Phi_{\theta}},\nonumber
%\end{eqnarray}
%which coincides with the Laplace transform (in $r$) of %$\,\int_{0}^{\infty}\frac{z}{r}\mathbb{P}\left(X(r)\in\mathrm{d}z\right)$ (see, \eqref{la.id.02} with $W$ %replaced by $1$).
By \eqref{lap.1.new}, \eqref{rem3.1.Lap.1.new}, \eqref{la.id.02} and the continuity of Laplace transforms, we have
%$$\lim\limits_{y\uparrow\infty}\phi(y,r)=\int_{0}^{\infty}\frac{z}{r}\mathbb{P}\left(X(r)\in\mathrm{d}z\right) % \,
%\,\, \text{and} \,\,\,
%\lim\limits_{y\uparrow\infty}\chi(x,y,r)=\ell_{r}(x)-W(x)\int_{0}^{\infty}\frac{z}{r}\mathbb{P}\left(X(r)\in\mathrm{d}z\right).$$
%Hence,
$$\lim\limits_{y\uparrow\infty}\left(W(x)\phi(y,r)+\chi(x,y,r)\right)=\ell_{r}(x).
$$
In fact, the same arguments as above lead to
$$\lim\limits_{y\uparrow\infty}\left(W^{\prime}(x)\phi(y,r)+\chi^{\prime}(x,y,r)\right)=\ell_{r}^{\prime}(x).$$
Therefore,
%we obtain the excursion measure of the event that there exists a time interval with length greater than $r$ during which the excursion process continuously stays above $x$.
\begin{eqnarray}
n\left(\alpha_{x}^{+}(\varepsilon)<\zeta\right)
\hspace{-0.3cm}&=&\hspace{-0.3cm}
\lim\limits_{y\uparrow\infty}n\left(\alpha_{x}^{+}(\varepsilon)<\zeta \mbox{ \emph{or} } \overline{\varepsilon}> x+y\right)
\nonumber\\
\hspace{-0.3cm}&=&\hspace{-0.3cm}
\lim\limits_{y\uparrow\infty}\frac{W^{\prime}(x)\phi(y,r)+\chi^{\prime}(x,y,r)}{W(x)\phi(y,r)+\chi(x,y,r)}
=\frac{\ell_{r}^{\prime}(x)}{\ell_{r}(x)},\nonumber
\end{eqnarray}
which is the desired result.
\end{proof}

\vspace{0.3cm}
The following result gives the joint Laplace transform involving $\alpha_{a}^{+}$ under the excursion measure.

\begin{prop}\label{lemma2}
For any $q,\lambda\in[0,\infty)$ and $a,r\in(0,\infty)$, we have
\begin{eqnarray}\label{n.2}
\hspace{-0.3cm}&&\hspace{-0.3cm}
n\left(\mathrm{e}^{-q \alpha_{a}^{+}(\varepsilon)}\mathrm{e}^{\lambda \left(a-\varepsilon\left(\alpha_{a}^{+}(\varepsilon)+r\right)\right)-
\psi\left(\lambda\right)r}\mathbf{1}_{\{\alpha_{a}^{+}(\varepsilon)< \zeta\}}\right)
\nonumber\\
\hspace{-0.3cm}&=&\hspace{-0.3cm}
\frac{\ell_{r}^{(q)\prime}(a)}{\ell_{r}^{(q)}(a)}
\left(\mathrm{e}^{\lambda a}-\left(\psi(\lambda)-q\right)
\left(\mathrm{e}^{\lambda a}\int_{0}^{a}W^{(q)}(z)\mathrm{e}^{-\lambda z}\mathrm{d}z+\int_{0}^{r}
\mathrm{e}^{-\psi(\lambda) s}\,\ell_{s}^{(q)}(a)\mathrm{d}s\right)\right)
\nonumber\\
\hspace{-0.3cm}&&\hspace{-0.3cm}
-
\lambda \mathrm{e}^{\lambda a}+\left(\psi(\lambda)-q\right)
\left(\lambda \mathrm{e}^{\lambda a}\int_{0}^{a}W^{(q)}(z)\mathrm{e}^{-\lambda z}\mathrm{d}z+W^{(q)}(a)+\int_{0}^{r}
\mathrm{e}^{-\psi(\lambda) s}\,\ell_{s}^{(q)\prime}(a)\mathrm{d}s\right)
.
\end{eqnarray}
\end{prop}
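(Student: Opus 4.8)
\emph{Proof strategy.} The plan is to transfer the excursion-measure quantity in \eqref{n.2} to a process-level Parisian-ruin quantity via a compensation-formula argument, insert the known closed form of the latter from the Parisian-ruin literature, and differentiate in $a$. It is convenient to run $X$ from $0$, so that the local time of $\bar{X}-X$ at $0$ equals $\bar{X}$ and, at local time $w$, the running supremum equals $w$; consequently an excursion $\varepsilon_w$ stays above the level $w$ (in excursion coordinates) exactly when $X$ stays below $0$, the event $\{\alpha_w^+(\varepsilon_w)<\zeta(\varepsilon_w)\}$ is precisely the event that this excursion contains a below-$0$ stretch of $X$ of length exceeding $r$, and on the first such excursion one has $\kappa_r=\tau_w^+ +\alpha_w^+(\varepsilon_w)+r$ and $X(\kappa_r)=w-\varepsilon_w\bigl(\alpha_w^+(\varepsilon_w)+r\bigr)$.

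First I would apply the compensation formula to the Poisson point process $\{(t,\varepsilon_t)\}$, singling out the first ``fatal'' excursion together with the appropriate discount and weight; combined with \eqref{two.sid.ex.} for the survival factor $\mathbb{E}_0(\mathrm{e}^{-q\tau_w^+}\mathbf{1}_{\{\tau_w^+<\kappa_r\}})=\ell_r^{(q)}(0)/\ell_r^{(q)}(w)$ this gives, for every $a>0$,
\[
\mathbb{E}_0\Bigl(\mathrm{e}^{-q\kappa_r}\mathrm{e}^{\lambda X(\kappa_r)-\psi(\lambda)r}\mathbf{1}_{\{\kappa_r<\tau_a^+\}}\Bigr)=\mathrm{e}^{-qr}\,\ell_r^{(q)}(0)\int_0^a\frac{N(w)}{\ell_r^{(q)}(w)}\,\mathrm{d}w,
\]
where $N(w)$ denotes the left-hand side of \eqref{n.2} with $a$ replaced by $w$ (the $\mathrm{e}^{-qr}$ and the splitting $\mathrm{e}^{-q\kappa_r}=\mathrm{e}^{-q\tau_w^+}\mathrm{e}^{-q\alpha_w^+(\varepsilon_w)}\mathrm{e}^{-qr}$ match the $\mathrm{e}^{-q\alpha_w^+(\varepsilon)}$ weight appearing under $n$). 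On the other hand, since the classical Parisian ruin time depends on the path only through the sign of $X$, the strong Markov property at $\tau_a^+$ together with \eqref{two.sid.ex.} yields
\[
\mathbb{E}_0\Bigl(\mathrm{e}^{-q\kappa_r}\mathrm{e}^{\lambda X(\kappa_r)-\psi(\lambda)r}\mathbf{1}_{\{\kappa_r<\tau_a^+\}}\Bigr)=h(0)-\frac{\ell_r^{(q)}(0)}{\ell_r^{(q)}(a)}\,h(a),\qquad h(x):=\mathbb{E}_x\Bigl(\mathrm{e}^{-q\kappa_r}\mathrm{e}^{\lambda X(\kappa_r)-\psi(\lambda)r}\mathbf{1}_{\{\kappa_r<\infty\}}\Bigr).
\]

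Next I would determine $h$. Writing $\mathrm{e}^{-q\kappa_r}\mathrm{e}^{\lambda X(\kappa_r)-\psi(\lambda)r}=\mathrm{e}^{\lambda x-\psi(\lambda)r}\,\mathrm{e}^{\lambda(X(\kappa_r)-x)-\psi(\lambda)\kappa_r}\,\mathrm{e}^{-(q-\psi(\lambda))\kappa_r}$ and applying the change of measure $\mathbb{P}^{\lambda}_x$ associated with the exponential martingale, one gets $h(x)=\mathrm{e}^{\lambda x-\psi(\lambda)r}\,\mathbb{E}^{\lambda}_x\bigl(\mathrm{e}^{-(q-\psi(\lambda))\kappa_r}\mathbf{1}_{\{\kappa_r<\infty\}}\bigr)$, i.e.\ $h$ is expressed through the $(q-\psi(\lambda))$-discounted Parisian ruin probability under $\mathbb{P}^{\lambda}$, available from Loeffen et al.\ (2013), Czarna and Palmowski (2014) and Loeffen et al.\ (2018); alternatively one may quote directly the joint Laplace transform of the Parisian ruin time and position from Loeffen et al.\ (2018), or derive $h$ by conditioning on the last visit of $X$ to $0$ before $\kappa_r$ and using Kendall's identity. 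Using the standard relations $W^{(q-\psi(\lambda))}_{\lambda}(z)=\mathrm{e}^{-\lambda z}W^{(q)}(z)$ and $\ell^{(q-\psi(\lambda))}_{s,\lambda}(x)=\mathrm{e}^{-\lambda x-\psi(\lambda)s}\ell^{(q)}_s(x)$ (immediate from the definitions and $\mathbb{P}^{\lambda}(X(s)\in\mathrm{d}z)=\mathrm{e}^{\lambda z-\psi(\lambda)s}\mathbb{P}(X(s)\in\mathrm{d}z)$), this rearranges to $h(x)=\mathrm{e}^{-qr}A(x)-c\,\ell_r^{(q)}(x)$, where
\[
A(x):=\mathrm{e}^{\lambda x}-\bigl(\psi(\lambda)-q\bigr)\Bigl(\mathrm{e}^{\lambda x}\!\int_0^x W^{(q)}(z)\mathrm{e}^{-\lambda z}\,\mathrm{d}z+\int_0^r\mathrm{e}^{-\psi(\lambda)s}\ell^{(q)}_s(x)\,\mathrm{d}s\Bigr)
\]
and $c=c(q,\lambda,r)$ is a constant.

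Finally I would combine the two expressions for $\mathbb{E}_0(\cdots)$: the $c$-terms cancel, leaving $\int_0^a N(w)/\ell_r^{(q)}(w)\,\mathrm{d}w=A(0)/\ell_r^{(q)}(0)-A(a)/\ell_r^{(q)}(a)$, and differentiating in $a$ gives $N(a)=\frac{\ell_r^{(q)\prime}(a)}{\ell_r^{(q)}(a)}A(a)-A'(a)$, which is exactly \eqref{n.2} once $A'(a)$ is expanded (the isolated $W^{(q)}(a)$ in \eqref{n.2} being $\partial_a\bigl[\mathrm{e}^{\lambda a}\int_0^a W^{(q)}(z)\mathrm{e}^{-\lambda z}\mathrm{d}z\bigr]-\lambda\mathrm{e}^{\lambda a}\int_0^a W^{(q)}(z)\mathrm{e}^{-\lambda z}\mathrm{d}z$). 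The main difficulties I expect are: (i) making the compensation step rigorous --- identifying the straddling excursion at the local time where $X$ first reaches $a$, checking that $\kappa_r$ and $X(\kappa_r)$ sit on the fatal excursion as described, and justifying the differentiation in $a$ (covered by the paper's standing differentiability assumption on $\ell_r^{(q)}$); and (ii) pinning down $h$ in the precise form $\mathrm{e}^{-qr}A(x)-c\,\ell_r^{(q)}(x)$ --- in particular producing the correction term $\int_0^r\mathrm{e}^{-\psi(\lambda)s}\ell^{(q)}_s(x)\,\mathrm{d}s$ (which originates from the last $r$ time units of the fatal below-$0$ stretch) with the correct coefficient, and carrying the change-of-measure bookkeeping through the regime $q-\psi(\lambda)<0$.
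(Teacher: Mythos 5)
Your proposal is correct and follows essentially the same route as the paper: the compensation formula combined with \eqref{two.sid.ex.} turns the two-sided Parisian joint Laplace transform into $\int_x^a \frac{\ell_r^{(q)}(x)}{\ell_r^{(q)}(w)}N(w)\,\mathrm{d}w$ (with $N(w)$ the excursion quantity), the closed form of that transform is imported from Loeffen et al.\ (2018), and differentiation in $a$ delivers \eqref{n.2} exactly as you compute, including the $W^{(q)}(a)$ term from expanding $A'(a)$. The only deviation is your strong-Markov split at $\tau_a^+$ into the one-sided functional $h$, to be computed by exponential tilting with discount rate $q-\psi(\lambda)$ (whose possibly negative sign you rightly flag as a wrinkle); the paper avoids this by quoting the two-sided identity of Theorem 3.1 of Loeffen et al.\ (2018) (its \eqref{fluc.1}), which already has the form $A(x)-\frac{\ell_r^{(q)}(x)}{\ell_r^{(q)}(a)}A(a)$, so the intermediate $h$ and the unknown constant $c$ never appear --- and indeed your own alternative of citing that theorem directly collapses your argument to the paper's.
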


\begin{proof}[Proof:]\,\,\,
Given $q,\lambda\geq0$, $r, a>0$ and $a\geq x$, by Theorem 3.1 in Loeffen et al. (2018) we have
\begin{eqnarray}\label{fluc.1}
\hspace{-0.3cm}&&\hspace{-0.3cm}
\mathbb{E}_{x}\left(\mathrm{e}^{-q \left(\kappa_{r}-r\right)}\mathrm{e}^{\lambda X(\kappa_{r})-\psi\left(\lambda\right)r}\mathbf{1}_{\{\kappa_{r}<\tau_{a}^{+}\}}\right)
\nonumber\\
\hspace{-0.3cm}&=&\hspace{-0.3cm}
\mathrm{e}^{\lambda x}-\left(\psi(\lambda)-q\right)
\left(\mathrm{e}^{\lambda x}\int_{0}^{x}W^{(q)}(z)\mathrm{e}^{-\lambda z}\mathrm{d}z+\int_{0}^{r}
\mathrm{e}^{-\psi(\lambda) s}\,\ell_{s}^{(q)}(x)\mathrm{d}s\right)
\nonumber\\
\hspace{-0.3cm}&&\hspace{-0.3cm}
-\frac{\ell_{r}^{(q)}(x)}{\ell_{r}^{(q)}(a)}
\left(\mathrm{e}^{\lambda a}-\left(\psi(\lambda)-q\right)
\left(\mathrm{e}^{\lambda a}\int_{0}^{a}W^{(q)}(z)\mathrm{e}^{-\lambda z}\mathrm{d}z+\int_{0}^{r}
\mathrm{e}^{-\psi(\lambda) s}\,\ell_{s}^{(q)}(a)\mathrm{d}s\right)\right).
\end{eqnarray}
By \eqref{two.sid.ex.} and the compensation formula, see for example Corollary 4.11  of Bertoin (1996) or Theorem 4.4  of Kyprianou (2014)), one gets
\begin{eqnarray}\label{}
\hspace{-0.3cm}&&\hspace{-0.3cm}
\mathbb{E}_{x}\left(\mathrm{e}^{-q \left(\kappa_{r}-r\right)}\mathrm{e}^{\lambda X(\kappa_{r})-\psi\left(\lambda\right)r}\mathbf{1}_{\{\kappa_{r}<\tau_{a}^{+}\}}\right)
\nonumber\\
\hspace{-0.3cm}&=&\hspace{-0.3cm}
\mathbb{E}_{x}\left(\sum_{g}\mathrm{e}^{-q g}\prod\limits_{h<g}\mathbf{1}_{\{\alpha_{x+L(h)}^{+}(\varepsilon_{h})= \,\zeta_{h},\,x+L(g)\leq a\}}
\mathrm{e}^{-q \alpha_{x+L(g)}^{+}(\varepsilon_{g})}\right.
\nonumber\\
\hspace{-0.3cm}&&\hspace{0.5cm}
\left.
\times\mathrm{e}^{\lambda \left(x+L(g)-\varepsilon_{g}\left(\alpha_{x+L(g)}^{+}(\varepsilon_{g})+r\right)\right)-\psi\left(\lambda\right)r}\mathbf{1}_{\{\alpha_{x+L(g)}^{+}(\varepsilon_{g})< \zeta_{g}\}}\right)
\nonumber\\
\hspace{-0.3cm}&=&\hspace{-0.3cm}
\mathbb{E}_{x}\left(\int_{0}^{\infty}\mathrm{e}^{-q w}\prod\limits_{h<w}\mathbf{1}_{\{\alpha_{x+L(h)}^{+}(\varepsilon_{h})= \,\zeta_{h},\,x+L(w)\leq a\}}
\int_{\mathcal{E}}\mathrm{e}^{-q \alpha_{x+L(w)}^{+}(\varepsilon)}\right.
\nonumber\\
\hspace{-0.3cm}&&\hspace{0.5cm}
\left.
\times\mathrm{e}^{\lambda \left(x+L(w)-\varepsilon\left(\alpha_{x+L(w)}^{+}(\varepsilon)+r\right)\right)-\psi\left(\lambda\right)r}\mathbf{1}_{\{\alpha_{x+L(w)}^{+}(\varepsilon)< \zeta\}}\,n(\,\mathrm{d}\varepsilon)\,\mathrm{d}L(w)\right)
\nonumber\\
\hspace{-0.3cm}&=&\hspace{-0.3cm}
\mathbb{E}_{x}\left(\int_{0}^{a-x}\mathrm{e}^{-q L^{-1}(w-)}
\prod\limits_{h<L^{-1}(w-)}\mathbf{1}_{\{\alpha_{x+L(h)}^{+}(\varepsilon_{h})= \,\zeta_{h}\}}
\right.
\nonumber\\
\hspace{-0.3cm}&&\hspace{0.5cm}
\left.
\times\int_{\mathcal{E}}\mathrm{e}^{-q \alpha_{x+w}^{+}(\varepsilon)}\mathrm{e}^{\lambda \left(x+w-\varepsilon\left(\alpha_{x+w}^{+}(\varepsilon)+r\right)\right)-\psi\left(\lambda\right)r}\mathbf{1}_{\{\alpha_{x+w}^{+}(\varepsilon)< \zeta\}}\,n(\,\mathrm{d}\varepsilon)\,\mathrm{d}w\right)
\nonumber\\
\hspace{-0.3cm}&=&\hspace{-0.3cm}
\int_{x}^{a}\mathbb{E}_{x}\left(\mathrm{e}^{-q \tau_{w}^{+}}\mathbf{1}_{\{\tau_{w}^{+}<\kappa_{r}\}}\right)
\,n\left(\mathrm{e}^{-q \alpha_{w}^{+}(\varepsilon)}\mathrm{e}^{\lambda \left(w-\varepsilon\left(\alpha_{w}^{+}(\varepsilon)+r\right)\right)-\psi\left(\lambda\right)r}\mathbf{1}_{\{\alpha_{w}^{+}(\varepsilon)< \zeta\}}\right)\,\mathrm{d}w
\nonumber\\
\hspace{-0.3cm}&=&\hspace{-0.3cm}
\int_{x}^{a}
\frac{\ell_{r}^{(q)}(x)}{\ell_{r}^{(q)}(w)}
\,n\left(\mathrm{e}^{-q \alpha_{w}^{+}(\varepsilon)}\mathrm{e}^{\lambda \left(w-\varepsilon\left(\alpha_{w}^{+}(\varepsilon)+r\right)\right)-\psi\left(\lambda\right)r}\mathbf{1}_{\{\alpha_{w}^{+}(\varepsilon)< \zeta\}}\right)\,\mathrm{d}w,\nonumber
\end{eqnarray}
where $\varepsilon_{h}\,(h\leq g)$ denotes
the excursion (away from $0$)  with left-end point $h$ for the reflected process $\{\bar{X}(t)-X(t);t\geq0\}$, and $\zeta_{h}$ and $\bar{\varepsilon}_{h}$ denote its lifetime and excursion height, respectively.
Note that by \eqref{fluc.1},
\begin{eqnarray}\label{}
\hspace{-0.3cm}&&\hspace{-0.3cm}
\frac{\ell_{r}^{(q)}(x)}{\ell_{r}^{(q)}(a)}
\,n\left(\mathrm{e}^{-q \alpha_{a}^{+}(\varepsilon)}\mathrm{e}^{\lambda \left(a-\varepsilon\left(\alpha_{a}^{+}(\varepsilon)+r\right)\right)-
\psi\left(\lambda\right)r}\mathbf{1}_{\{\alpha_{a}^{+}(\varepsilon)< \zeta\}}\right)
\nonumber\\
\hspace{-0.3cm}&=&\hspace{-0.3cm}
\frac{\ell_{r}^{(q)}(x)\,\ell_{r}^{(q)\prime}(a)}{\left(\ell_{r}^{(q)}(a)\right)^{2}}
\left(\mathrm{e}^{\lambda a}-\left(\psi(\lambda)-q\right)
\left(\mathrm{e}^{\lambda a}\int_{0}^{a}W^{(q)}(z)\mathrm{e}^{-\lambda z}\mathrm{d}z+\int_{0}^{r}
\mathrm{e}^{-\psi(\lambda) s}\,\ell_{s}^{(q)}(a)\mathrm{d}s\right)\right)
\nonumber\\
\hspace{-0.3cm}&&\hspace{-0.3cm}
-\frac{\ell_{r}^{(q)}(x)}{\ell_{r}^{(q)}(a)}
\left(\lambda \mathrm{e}^{\lambda a}-\left(\psi(\lambda)-q\right)
\left(\lambda \mathrm{e}^{\lambda a}\int_{0}^{a}W^{(q)}(z)\mathrm{e}^{-\lambda z}\mathrm{d}z+W^{(q)}(a)+\int_{0}^{r}
\mathrm{e}^{-\psi(\lambda) s}\,\ell_{s}^{(q)\prime}(a)\mathrm{d}s\right)\right).
\nonumber
\end{eqnarray}
We thus obtain \eqref{n.2}.
\end{proof}

\vspace{0.3cm}
The following result gives an expression of the potential measure of the excursion process until time $\alpha_{a}^{+}$ under  the excursion measure.

\begin{prop}\label{lemma3}
For any $q,\lambda\in[0,\infty)$, $a,r\in(0,\infty)$ and any bounded differentiable function $f$, we have
\begin{eqnarray}\label{n.3}
\hspace{-0.3cm}&&\hspace{-0.3cm}
W^{(q)}(0)\,\mathrm{e}^{q r}
f(a)
+
\,n\left(\int_{0}^{\zeta}\mathrm{e}^{-q (t-r)}
f(a-\varepsilon\left(t\right))
\mathbf{1}_{\{\alpha_{a}^{+}(\varepsilon)> t-r\}}\mathrm{d}t\right)
\nonumber\\
\hspace{-0.3cm}&=&\hspace{-0.3cm}
\frac{\ell_{r}^{(q)\prime}(a)}{\ell_{r}^{(q)}(a)}
\left(\int_{0}^{r}\mathrm{e}^{q(r-s)}\mathbb{E}_{a}\left(f(X(s))\right)\mathrm{d}s
-\int_{0}^{a}W^{(q)}(a-z)\mathbb{E}_{z}\left(f(X(r))\right)\mathrm{d}z\right.
\nonumber\\
\hspace{-0.3cm}&&\hspace{-0.3cm}
\left.-\int_{0}^{r}\mathbb{E}_{}\left(f(X(r-s))\right)\ell_{s}^{(q)}(a)\mathrm{d}s\right)
\nonumber\\
\hspace{-0.3cm}&&\hspace{-0.3cm}
-
\int_{0}^{r}\mathrm{e}^{q(r-s)}\mathbb{E}_{}\left(f'(a+X(s))\right)
\mathrm{d}s
+\int_{0}^{a}W^{(q)\prime}(a-z)\mathbb{E}_{z}\left(f(X(r))\right)\mathrm{d}z
\nonumber\\
\hspace{-0.3cm}&&\hspace{-0.3cm}
+W^{(q)}(0+)\,\mathbb{E}_{a}\left(f(X(r))\right)
+\int_{0}^{r}\mathbb{E}_{}\left(f(X(r-s))\right)\ell_{s}^{(q)\prime}(a)\mathrm{d}s.
\end{eqnarray}
\end{prop}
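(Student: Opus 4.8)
The plan is to follow the route used in the proof of Proposition~\ref{lemma2}: express the excursion-measure functional on the left of \eqref{n.3} as an $a$-derivative of a Parisian-type two-sided quantity, compute that quantity in closed form from the literature, compute it \emph{also} by an excursion/compensation-formula expansion, and match. \textbf{Step 1 (the analytic input).} I would first invoke the $q$-potential measure of $X$ killed at the Parisian ruin time from Loeffen et al.\ (2018); applied to the process additionally killed at $\tau_a^+$ and to a bounded differentiable $f$ of the position alone it reads, for $0\le x\le a$,
\[
G(x,a):=\int_0^\infty \mathrm{e}^{-q(t-r)}\,\mathbb{E}_x\big(f(X(t));\,t<\kappa_r\wedge\tau_a^+\big)\,\mathrm{d}t=H(x)-\frac{\ell_r^{(q)}(x)}{\ell_r^{(q)}(a)}\,H(a),
\]
where
\[
H(x):=\int_0^r \mathrm{e}^{q(r-s)}\mathbb{E}_x\big(f(X(s))\big)\,\mathrm{d}s-\int_0^x W^{(q)}(x-z)\,\mathbb{E}_z\big(f(X(r))\big)\,\mathrm{d}z-\int_0^r\mathbb{E}\big(f(X(r-s))\big)\,\ell_s^{(q)}(x)\,\mathrm{d}s .
\]
(If that theorem is stated without the barrier $a$, one reduces to this form by the strong Markov property exactly as in the computation leading to \eqref{pro.xi.c.}.) Differentiating $H$ under the integral sign — Leibniz' rule applied to $\int_0^aW^{(q)}(a-z)\mathbb{E}_z(f(X(r)))\,\mathrm{d}z$ producing precisely the summand $W^{(q)}(0+)\mathbb{E}_a(f(X(r)))$ — one checks that the right-hand side of \eqref{n.3} equals $\tfrac{\ell_r^{(q)\prime}(a)}{\ell_r^{(q)}(a)}H(a)-H'(a)$; so it suffices to identify the left-hand side of \eqref{n.3} with $\tfrac{\ell_r^{(q)\prime}(a)}{\ell_r^{(q)}(a)}H(a)-H'(a)$.

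\textbf{Step 2 (the excursion expansion).} Next I would recompute $G(x,a)$ through the Poisson point process $\{(t,\varepsilon_t);t\ge0\}$ of excursions of $\bar X-X$ away from $0$, splitting $\int_0^{\kappa_r\wedge\tau_a^+}(\cdot)\,\mathrm{d}t$ into the Lebesgue time the path spends at its running supremum and the time spent inside the excursions below it. On the ladder time set one has $\bar X(t)=x+L(t)$ and the Parisian clock vanishes there (using $x\ge 0$); by the creeping of $X$ at new maxima — equivalently the drift of the inverse local time $L^{-1}$, which equals $W^{(q)}(0+)=W(0+)$ (cf.\ Bertoin 1996 and Kyprianou 2014) — this portion contributes $W^{(q)}(0+)\,\mathrm{e}^{qr}\int_x^a \mathbb{E}_x\big(\mathrm{e}^{-q\tau_w^+}\mathbf 1_{\{\tau_w^+<\kappa_r\}}\big)f(w)\,\mathrm{d}w$. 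For the excursion portion I would apply the compensation formula (Corollary~4.11 of Bertoin 1996 / Theorem~4.4 of Kyprianou 2014): an excursion $\varepsilon$ arising when the supremum is at level $w$ is retained only if no earlier excursion has triggered Parisian ruin, contributing by \eqref{two.sid.ex.} the weight $\mathbb{E}_x(\mathrm{e}^{-q\tau_w^+}\mathbf 1_{\{\tau_w^+<\kappa_r\}})=\ell_r^{(q)}(x)/\ell_r^{(q)}(w)$, and along the current excursion it contributes $\int_0^\zeta \mathrm{e}^{-q(t-r)}f(w-\varepsilon(t))\mathbf 1_{\{\alpha_w^+(\varepsilon)>t-r\}}\,\mathrm{d}t$ — the global discount $\mathrm{e}^{-q(t-r)}$ being split off the factor $\mathrm{e}^{-q\tau_w^+}$, $w-\varepsilon(t)$ being the value of $X$, and the indicator recording that the local Parisian ruin, which would occur at $\alpha_w^+(\varepsilon)+r$, has not yet happened. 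This yields
\[
G(x,a)=\int_x^a \frac{\ell_r^{(q)}(x)}{\ell_r^{(q)}(w)}\left(W^{(q)}(0+)\,\mathrm{e}^{qr}f(w)+n\Big(\int_0^\zeta \mathrm{e}^{-q(t-r)}f(w-\varepsilon(t))\mathbf 1_{\{\alpha_w^+(\varepsilon)>t-r\}}\,\mathrm{d}t\Big)\right)\mathrm{d}w .
\]
Finiteness of the inner $n$-functionals (so that the compensation formula applies) follows from the boundedness of $f$, Corollary~\ref{rem3.1.n}, and $\int(1\wedge\zeta)\,\mathrm{d}n<\infty$; for $q>0$ the discount makes this immediate.

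\textbf{Step 3 (conclude).} Differentiating the expression of Step 2 in $a$ gives $\partial_aG(x,a)=\tfrac{\ell_r^{(q)}(x)}{\ell_r^{(q)}(a)}\big(W^{(q)}(0+)\mathrm{e}^{qr}f(a)+n(\int_0^\zeta\mathrm{e}^{-q(t-r)}f(a-\varepsilon(t))\mathbf 1_{\{\alpha_a^+(\varepsilon)>t-r\}}\,\mathrm{d}t)\big)$, i.e.\ $\ell_r^{(q)}(x)/\ell_r^{(q)}(a)$ times the left-hand side of \eqref{n.3} (recall $W^{(q)}(0)=W^{(q)}(0+)$), while differentiating the closed form of Step 1 gives $\partial_aG(x,a)=\tfrac{\ell_r^{(q)}(x)}{\ell_r^{(q)}(a)}\big(\tfrac{\ell_r^{(q)\prime}(a)}{\ell_r^{(q)}(a)}H(a)-H'(a)\big)$. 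Equating the two and cancelling the common factor $\ell_r^{(q)}(x)/\ell_r^{(q)}(a)$ produces \eqref{n.3}.

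The hard part will be Step~2: correctly isolating the contribution of the time spent at the running supremum and pinning its coefficient as $W^{(q)}(0+)\mathrm{e}^{qr}$ from the ladder/creeping structure, and keeping the Parisian clock together with the $\mathrm{e}^{-q(t-r)}$ discount consistent while the global time is decomposed into ``time to reach the current supremum level'' plus ``time elapsed within the current excursion''. Once this expansion is established, Step~3 is pure differentiation and cancellation.
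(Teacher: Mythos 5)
Your proposal is correct and follows essentially the same route as the paper's proof: compute the potential measure $\int_0^\infty\mathrm{e}^{-q(t-r)}\mathbb{E}_x\left(f(X(t));\,t<\kappa_r\wedge\tau_a^+\right)\mathrm{d}t$ once by splitting off the time spent at the running supremum (coefficient $W^{(q)}(0)\,\mathrm{e}^{qr}$) and expanding the excursion part by the compensation formula weighted by $\ell_r^{(q)}(x)/\ell_r^{(q)}(w)$, and once by Theorem 4.4 of Loeffen et al.\ (2018), then equate. Your Step 1 and Step 3 merely make explicit the differentiation in $a$ (and the Leibniz-rule term $W^{(q)}(0+)\mathbb{E}_a\left(f(X(r))\right)$) that the paper leaves implicit in ``Combining \eqref{21.10} and \eqref{22.10}''.
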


\begin{proof}[Proof:]\,\,\,
Let $e_{q}$ be an exponentially distributed random variable with mean $1/q$ independent of $X$. For $q,\lambda\geq0$ and $r, b>0$ with $x\leq a$, we have
\begin{eqnarray}\label{h1h2.10}
\hspace{-0.3cm}&&\hspace{-0.3cm}
\int_{0}^{\infty}\mathrm{e}^{-q \left(t-r\right)}
\mathbb{E}_{x}\left(f(X(t));\,t<\kappa_{r}\wedge \tau_{a}^{+}\right)
\mathrm{d}t
\nonumber\\
\hspace{-0.3cm}&=&\hspace{-0.3cm}
\mathbb{E}_{x}\left(\int_{0}^{\kappa_{r}\wedge \tau_{a}^{+}}\mathrm{e}^{-q \left(t-r\right)}
f(X(t))\,
\mathrm{d}\left(\int_{0}^{t}\mathbf{1}_{\{X(s)=\bar{X}(s)\}}\mathrm{d}s\right)\right)
\nonumber\\
\hspace{-0.3cm}&&\hspace{-0.3cm}
+
\frac{1}{q}\mathbb{E}_{x}\left(\mathrm{e}^{q r}
f(X(e_{q}))\,\mathbf{1}_{\{e_{q}<\kappa_{r}\wedge \tau_{a}^{+}\}}\mathbf{1}_{\{X(e_{q})<\bar{X}(e_{q})\}}\right)
\nonumber\\
\hspace{-0.3cm}&:=&\hspace{-0.3cm}
h_{1}(x)+h_{2}(x).
\end{eqnarray}
Note that
$$\int_{0}^{t}\mathbf{1}_{\{X(s)=\bar{X}(s)\}}\mathrm{d}s=W^{(q)}(0) \,\bar{X}(t)$$
 and that  $X(t)=\bar{X}(t)$ implies $t=L^{-1}(L(t))$ a.s., the function $h_{1}(x)$ can be further expressed as follows.
\begin{eqnarray}\label{h1.10}
\hspace{-0.3cm}&&\hspace{-0.3cm}
W^{(q)}(0)\,\mathbb{E}_{x}\left(\int_{0}^{\infty}\mathrm{e}^{-q \left(L^{-1}(L(t))-r\right)}
f(x+L(t))
\mathbf{1}_{\{x+L(t)\leq a,\,L^{-1}(L(t))<\kappa_{r}\}}
\mathrm{d}L(t)\right)
\nonumber\\
\hspace{-0.3cm}&=&\hspace{-0.3cm}
W^{(q)}(0)\,\mathbb{E}_{x}\left(\int_{0}^{a-x}\mathrm{e}^{-q \left(L^{-1}(w)-r\right)}
f(x+w)
\mathbf{1}_{\{L^{-1}(w)<\kappa_{r}\}}
\mathrm{d}w\right)
\nonumber\\
\hspace{-0.3cm}&=&\hspace{-0.3cm}
W^{(q)}(0)\,\mathrm{e}^{q r}\int_{0}^{a-x}\mathbb{E}_{x}\left(
\mathrm{e}^{-q L^{-1}(w)}\mathbf{1}_{\{L^{-1}(w)<\kappa_{r}\}}\right)
f(x+w)
\mathrm{d}w
\nonumber\\
\hspace{-0.3cm}&=&\hspace{-0.3cm}
W^{(q)}(0)\,\mathrm{e}^{q r}\int_{x}^{a}
\frac{\ell^{(q)}_{r}(x)}{\ell_{r}^{(q)}(w)}\,
f(w)
\mathrm{d}w,
\end{eqnarray}
where \eqref{two.sid.ex.} is used in the final equality.

To further develop  $h_{2}(x)$, note  that
$$X(e_{q})<\bar{X}(e_{q})$$
if and only if there is an excursion  with left end point $g$  such that
$$e_{q}\in(g,g+\zeta_{g}).$$
Hence, by the compensation formula and the memoryless property of the exponential random variable, $h_{2}(x)$ can be rewritten as
\begin{eqnarray}\label{h2.10}
\hspace{-0.3cm}&&\hspace{-0.3cm}
\frac{1}{q}\mathbb{E}_{x}\left(\sum_{g}\mathrm{e}^{q r}
\prod\limits_{h<g}\mathbf{1}_{\{\alpha_{x+L(h)}^{+}(\varepsilon_{h})= \,\zeta_{h},\,x+L(g)\leq a\}}
\right.
\nonumber\\
\hspace{-0.3cm}&&\hspace{0.5cm}
\left.
\times f(x+L(g)-\varepsilon_{g}\left(e_{q}-g\right))
\mathbf{1}_{\{\alpha_{x+L(g)}^{+}(\varepsilon_{g})>e_{q}-g-r,\,0<e_{q}-g<\zeta_{g}\}}\right)
\nonumber\\
\hspace{-0.3cm}&=&\hspace{-0.3cm}
\frac{1}{q}\mathbb{E}_{x}\left(\sum_{g}\mathrm{e}^{-q (g-r)}
\prod\limits_{h<g}\mathbf{1}_{\{\alpha_{x+L(h)}^{+}(\varepsilon_{h})= \,\zeta_{h},\,x+L(g)\leq a\}}
\right.
\nonumber\\
\hspace{-0.3cm}&&\hspace{0.5cm}
\left.
\times f(x+L(g)-\varepsilon_{g}(e_{q}))
\mathbf{1}_{\{\alpha_{x+L(g)}^{+}(\varepsilon_{g})>e_{q}-r,\,e_{q}<\zeta_{g}\}}\right)
\nonumber\\
\hspace{-0.3cm}&=&\hspace{-0.3cm}
\frac{1}{q}\mathbb{E}_{x}\left(\int_{0}^{\infty}\mathrm{e}^{-q (w-r)}\prod\limits_{h<w}\mathbf{1}_{\{\alpha_{x+L(h)}^{+}(\varepsilon_{h})= \,\zeta_{h},\,x+L(w)\leq a\}}
\right.
\nonumber\\
\hspace{-0.3cm}&&\hspace{0.5cm}
\left.
\times \int_{\mathcal{E}}f(x+L(w)-\varepsilon\left(e_{q}\right))
\mathbf{1}_{\{\alpha_{x+L(w)}^{+}(\varepsilon)> e_{q}-r,\,e_{q}<\zeta\}}n(\,\mathrm{d}\varepsilon)\,\mathrm{d}L(w)\right)
\nonumber\\
\hspace{-0.3cm}&=&\hspace{-0.3cm}
\frac{1}{q}\mathbb{E}_{x}\left(\int_{x}^{a}\mathrm{e}^{-q (\tau_{w}^{+}-r)}\mathbf{1}_{\{\tau_{w}^{+}<\kappa_{r}\}}
\int_{\mathcal{E}}f(w-\varepsilon\left(e_{q}\right))
\mathbf{1}_{\{\alpha_{w}^{+}(\varepsilon)> e_{q}-r,\,e_{q}<\zeta\}}n(\,\mathrm{d}\varepsilon)\,\mathrm{d}w\right)
\nonumber\\
\hspace{-0.3cm}&=&\hspace{-0.3cm}
\int_{x}^{a}\mathbb{E}_{x}\left(\mathrm{e}^{-q \tau_{w}^{+}}\mathbf{1}_{\{\tau_{w}^{+}<\kappa_{r}\}}\right)
\,n\left(\int_{0}^{\zeta}\mathrm{e}^{-q (t-r)}
f(w-\varepsilon\left(t\right))
\mathbf{1}_{\{\alpha_{w}^{+}(\varepsilon)> t-r\}}\mathrm{d}t\right)\,\mathrm{d}w
\nonumber\\
\hspace{-0.3cm}&=&\hspace{-0.3cm}
\int_{x}^{a}
\frac{\ell^{(q)}_{r}(x)}{\ell_{r}^{(q)}(w)}
n\left(\int_{0}^{\zeta}\mathrm{e}^{-q (t-r)}
f(w-\varepsilon\left(t\right))
\mathbf{1}_{\{\alpha_{w}^{+}(\varepsilon)> t-r\}}\mathrm{d}t\right)\mathrm{d}w.
\end{eqnarray}

It follows from \eqref{h1h2.10}, \eqref{h1.10} and \eqref{h2.10} that
\begin{eqnarray}\label{21.10}
\hspace{-0.3cm}&&\hspace{-0.3cm}
\int_{0}^{\infty}\mathrm{e}^{-q \left(t-r\right)}
\mathbb{E}_{x}\left(f(X(t)); \,t<\kappa_{r}\wedge \tau_{a}^{+}\right)
\mathrm{d}t
\nonumber\\
\hspace{-0.3cm}&=&\hspace{-0.3cm}
W^{(q)}(0)\,\mathrm{e}^{q r}\int_{x}^{a}
\frac{\ell^{(q)}_{r}(x)}{\ell_{r}^{(q)}(w)}\,
f(w)
\mathrm{d}w
\nonumber\\
\hspace{-0.3cm}&&\hspace{-0.3cm}
+\int_{x}^{a}\frac{\ell^{(q)}_{r}(x)}{\ell_{r}^{(q)}(w)}
\,n\left(\int_{0}^{\zeta}\mathrm{e}^{-q (t-r)}
f(w-\varepsilon\left(t\right))
\mathbf{1}_{\{\alpha_{w}^{+}(\varepsilon)> t-r\}}\mathrm{d}t\right)\mathrm{d}w.
\end{eqnarray}
Meanwhile, by Theorem 4.4 of Loeffen et al. (2018) we know that
\begin{eqnarray}\label{22.10}
\hspace{-0.3cm}&&\hspace{-0.3cm}
\int_{0}^{\infty}\mathrm{e}^{-q \left(t-r\right)}
\mathbb{E}_{x}\left(f(X(t)); \,t<\kappa_{r}\wedge \tau_{a}^{+}\right)
\mathrm{d}t
\nonumber\\
\hspace{-0.3cm}&=&\hspace{-0.3cm}
\int_{0}^{r}\mathrm{e}^{q(r-s)}\mathbb{E}_{x}\left(f(X(s))\right)\mathrm{d}s
-\int_{0}^{x}W^{(q)}(x-z)\mathbb{E}_{z}\left(f(X(r))\right)\mathrm{d}z
\nonumber\\
\hspace{-0.3cm}&&\hspace{-0.3cm}
-\int_{0}^{r}\mathbb{E}_{}\left(f(X(r-s))\right)\ell_{s}^{(q)}(x)\mathrm{d}s
\nonumber\\
\hspace{-0.3cm}&&\hspace{-0.3cm}
-\frac{\ell_{r}^{(q)}(x)}{\ell_{r}^{(q)}(a)}
\left(\int_{0}^{r}\mathrm{e}^{q(r-s)}\mathbb{E}_{a}\left(f(X(s))\right)\mathrm{d}s
-\int_{0}^{a}W^{(q)}(a-z)\mathbb{E}_{z}\left(f(X(r))\right)\mathrm{d}z\right.
\nonumber\\
\hspace{-0.3cm}&&\hspace{-0.3cm}
\left.-\int_{0}^{r}\mathbb{E}_{}\left(f(X(r-s))\right)\ell_{s}^{(q)}(a)\mathrm{d}s\right).
\end{eqnarray}
Combining \eqref{21.10} and \eqref{22.10} we obtain (\ref{n.3}).
%\begin{eqnarray}\label{}
%\hspace{-0.3cm}&&\hspace{-0.3cm}
%W^{(q)}(0)\,\mathrm{e}^{q r}
%f(a)
%\nonumber\\
%\hspace{-0.3cm}&&\hspace{-0.3cm}
%+
%\,n\left(\int_{0}^{\zeta}\mathrm{e}^{-q (t-r)}
%f(a-\varepsilon\left(t\right))
%\mathbf{1}_{\{\alpha_{a}^{+}(\varepsilon)> t-r\}}\mathrm{d}t\right)
%\nonumber\\
%\hspace{-0.3cm}&=&\hspace{-0.3cm}
%\frac{\ell_{r}^{(q)\prime}(a)}{\ell_{r}^{(q)}(a)}
%\left(\int_{0}^{r}\mathrm{e}^{q(r-s)}\mathbb{E}_{a}\left(f(X(s))\right)\mathrm{d}s
%-\int_{0}^{a}W^{(q)}(a-z)\mathbb{E}_{z}\left(f(X(r))\right)\mathrm{d}z\right.
%\nonumber\\
%\hspace{-0.3cm}&&\hspace{-0.3cm}
%\left.-\int_{0}^{r}\mathbb{E}_{}\left(f(X(r-s))\right)\ell_{s}^{(q)}(a)\mathrm{d}s\right)
%\nonumber\\
%\hspace{-0.3cm}&&\hspace{-0.3cm}
%-
%\int_{0}^{r}\mathrm{e}^{q(r-s)}\mathbb{E}_{}\left(f'(a+X(s))\right)
%\mathrm{d}s
%+\int_{0}^{a}W^{(q)\prime}(a-z)\mathbb{E}_{z}\left(f(X(r))\right)\mathrm{d}z
%\nonumber\\
%\hspace{-0.3cm}&&\hspace{-0.3cm}
%\left.+W^{(q)}(0+)\,\mathbb{E}_{a}\left(f(X(r))\right)
%+\int_{0}^{r}\mathbb{E}_{}\left(f(X(r-s))\right)\ell_{s}^{(q)\prime}(a)\mathrm{d}s\right)
%,\nonumber
%\end{eqnarray}
%which is \eqref{n.3}.
\end{proof}

\vspace{0.2cm}
\begin{rem}\label{rem0}
Replacing $f(x)$ with $f(x,a)$ in Proposition \ref{lemma3}, by similar arguments we have
\begin{eqnarray}\label{n.3''}
\hspace{-0.3cm}&&\hspace{-0.3cm}
W^{(q)}(0)\,\mathrm{e}^{q r}
f(a,a)
+
\,n\left(\int_{0}^{\zeta}\mathrm{e}^{-q (t-r)}
f(a-\varepsilon\left(t\right),a)
\mathbf{1}_{\{\alpha_{a}^{+}(\varepsilon)> t-r\}}\mathrm{d}t\right)
\nonumber\\
\hspace{-0.3cm}&=&\hspace{-0.3cm}
\frac{\ell_{r}^{(q)\prime}(a)}{\ell_{r}^{(q)}(a)}
\left(\int_{0}^{r}\mathrm{e}^{q(r-s)}\mathbb{E}_{a}\left(f(X(s),a)\right)\mathrm{d}s
-\int_{0}^{a}W^{(q)}(a-z)\mathbb{E}_{z}\left(f(X(r),a)\right)\mathrm{d}z\right.
\nonumber\\
\hspace{-0.3cm}&&\hspace{-0.3cm}
\left.-\int_{0}^{r}\mathbb{E}_{}\left(f(X(r-s),a)\right)\ell_{s}^{(q)}(a)\mathrm{d}s\right)
\nonumber\\
\hspace{-0.3cm}&&\hspace{-0.3cm}
-
\int_{0}^{r}\mathrm{e}^{q(r-s)}\mathbb{E}_{}\left(\frac{\partial}{\partial x}f(a+X(s),a)\right)
\mathrm{d}s
+\int_{0}^{a}W^{(q)\prime}(a-z)\mathbb{E}_{z}\left(f(X(r),a)\right)\mathrm{d}z
\nonumber\\
\hspace{-0.3cm}&&\hspace{-0.3cm}
\left.+W^{(q)}(0+)\,\mathbb{E}_{a}\left(f(X(r),a)\right)
+\int_{0}^{r}\mathbb{E}_{}\left(f(X(r-s),a)\right)\ell_{s}^{(q)\prime}(a)\mathrm{d}s\right)
.
\end{eqnarray}
\end{rem}

\vspace{0.2cm}
\begin{rem} \label{rem1}
Letting $f(x):=\mathrm{e}^{\lambda x-\psi\left(\lambda\right)r}$ in Proposition \ref{lemma3}, we have
\begin{eqnarray}\label{n.3'}
\hspace{-0.3cm}&&\hspace{-0.3cm}
n\left(\int_{0}^{\zeta}\mathrm{e}^{-q (t-r)}\mathrm{e}^{\lambda \left(a-\varepsilon\left(t\right)\right)-\psi\left(\lambda\right)r}\mathbf{1}_{\{\alpha_{a}^{+}(\varepsilon)> t-r\}}\,\mathrm{d}t\right)+
W^{(q)}(0)\,\mathrm{e}^{q r}
\mathrm{e}^{\lambda a-\psi\left(\lambda\right)r}
\nonumber\\
\hspace{-0.3cm}&=&\hspace{-0.3cm}
\frac{\ell_{r}^{(q)\prime}(a)}{\ell_{r}^{(q)}(a)}
\left(\frac{\mathrm{e}^{\lambda a}\left(1-\mathrm{e}^{-(\psi(\lambda)-q)r}\right)}{\psi(\lambda)-q}-
\mathrm{e}^{\lambda a}\int_{0}^{a}W^{(q)}(z)\mathrm{e}^{-\lambda z}\mathrm{d}z-\int_{0}^{r}
\mathrm{e}^{-\psi(\lambda) s}\,\ell_{s}^{(q)}(a)\mathrm{d}s\right)
\nonumber\\
\hspace{-0.3cm}&&\hspace{-0.3cm}
-
\frac{\lambda \mathrm{e}^{\lambda a}\left(1-\mathrm{e}^{-(\psi(\lambda)-q)r}\right)}{\psi(\lambda)-q}+
\lambda \mathrm{e}^{\lambda a}\int_{0}^{a}W^{(q)}(z)\mathrm{e}^{-\lambda z}\mathrm{d}z+W^{(q)}(a)+\int_{0}^{r}
\mathrm{e}^{-\psi(\lambda) s}\,\ell_{s}^{(q)\prime}(a)\mathrm{d}s
.\nonumber
\end{eqnarray}
\end{rem}

\vspace{0.3cm}

\section{Main results}
\setcounter{section}{4}
\setcounter{equation}{0}

In this section we present several results concerning the draw-down Parisian ruin.
The following first result solves a draw-down Parisian ruin based two-side exit problem. It generalizes Theorem 1 of Czarna and Renaud (2016).

\vspace{0.3cm}

\begin{thm}\label{thm00}
Given any $a$, let $\eta$ be another draw-down function such that $\eta(z)<\xi(z)< z$ for $z\leq a$.
For any $x\in (-\infty, a)$, we have
\begin{eqnarray}\label{two.sid.par.dra.dow.c}
\hspace{-0.3cm}&&\hspace{-0.3cm}
\mathbb{E}_{x}\left(\mathrm{e}^{-q \tau_{a}^{+}}\mathbf{1}_{\{\tau_{a}^{+}<\kappa_{r}^{\xi}\wedge \tau_{\eta}\}}\right)
\nonumber\\
\hspace{-0.3cm}&=&\hspace{-0.3cm}
\exp\left(-\int_{x}^{a}
\frac{W^{(q)\prime}(\overline{\xi}(w))\phi_{_{\Phi_{q}}}(\xi(w)-\eta(w),r)
+\chi^{(q)\prime}(\overline{\xi}(w),\xi(w)-\eta(w),r)}
{W^{(q)}(\overline{\xi}(w))\phi_{_{\Phi_{q}}}(\xi(w)-\eta(w),r)+\chi^{(q)}(\overline{\xi}(w),\xi(w)-\eta(w),r)}
\,\mathrm{d}w\right),\nonumber
\end{eqnarray}
where  the Laplace transforms of $\phi_{_{\Phi_{q}}}(y,r)$ and $\chi^{(q)}(x,y,r):=\mathrm{e}^{\Phi_{q}x}\chi_{_{\Phi_{q}}}(x,y,r)$ with respect to $r$, are given, respectively, by
\begin{eqnarray}
\label{Lap.4.1}
\hspace{-0.3cm}&&\hspace{-0.3cm}
\int_{0}^{\infty}\mathrm{e}^{-\theta r}\chi^{(q)}(x,y,r)\,\mathrm{d}r
\nonumber\\
\hspace{-0.3cm}&=&\hspace{-0.3cm}\frac{1}{\theta}\left(\frac{W_{y}^{(\theta+q,-\theta)}(x+y)}{W^{(\theta+q)}(y)}
-\frac{W^{(q)}(x)\mathrm{e}^{\Phi_{q}y}
\left(1+\theta\int_{0}^{y}\mathrm{e}^{-\Phi_{q}w}W^{(\theta+q)}(w)\mathrm{d}w\right)}{W^{(\theta+q)}(y)}\right)
,
\end{eqnarray}
and
\begin{eqnarray}
\label{Lap.4.2}
\int_{0}^{\infty}\mathrm{e}^{-\theta r}\phi_{_{\Phi_{q}}}(y,r)\,\mathrm{d}r
=
\frac{\mathrm{e}^{\Phi_{q}y}
\left(1+\theta\int_{0}^{y}\mathrm{e}^{-\Phi_{q}w}W^{(\theta+q)}(w)\mathrm{d}w\right)}
{\theta W^{(\theta+q)}(y)}
,
\end{eqnarray}
where $y\in(0,\infty)$ and  the derivative  of $\chi^{(q)}$ is taken on the first argument, and $\phi_{_{\Phi_{q}}}$ and $\chi_{_{\Phi_{q}}}$ play the roles of $\phi$ and $\chi$ for the process $(X,\mathbb{P}_{x}^{\Phi_{q}})$.
\end{thm}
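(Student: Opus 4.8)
The plan is to mimic the excursion-theoretic computation used in the proof of Proposition \ref{prop3.1}, but now carried out for the exponentially tilted process $(X,\mathbb{P}_x^{\Phi_q})$ and with the additional draw-down floor $\eta$ playing the role of the fixed level $-y$. First I would record the key reduction: by the standard tilting relation $\mathbb{E}_x(\mathrm{e}^{-q\tau_a^+}\mathbf{1}_{\{\tau_a^+<\kappa_r^\xi\wedge\tau_\eta\}}) = \mathbb{E}_x^{\Phi_q}(\mathbf{1}_{\{\tau_a^+<\kappa_r^\xi\wedge\tau_\eta\}})$, since $\psi(\Phi_q)=q$ and the event lies in $\mathcal{F}_{\tau_a^+}$; this replaces the Laplace-transform weight $\mathrm{e}^{-q\tau_a^+}$ by a probability under $\mathbb{P}_x^{\Phi_q}$, for which the scale functions are $W_{\Phi_q}^{(0)}=:W_{\Phi_q}$ etc. Then I would express $\{\tau_a^+<\kappa_r^\xi\wedge\tau_\eta\}$ in terms of the excursion process of the reflected process $\{\bar X(t)-X(t)\}$ away from its supremum: writing $L(t)=\bar X(t)-x$ as local time, the event that $X$ reaches $a$ before the draw-down Parisian ruin time $\kappa_r^\xi$ and before $\tau_\eta$ happens exactly when, for every excursion $\varepsilon_t$ with $t\le a-x$, (i) that excursion does not contain a sub-interval of length $>r$ during which $X$ stays above $\xi(\bar X)$, i.e. (translating by the current supremum level $w=x+t$) it stays above the relative level $\bar\xi(w)=w-\xi(w)$; and (ii) the excursion depth does not exceed $w-\eta(w)$, i.e. $\overline{\varepsilon}_t\le \bar\xi(w)+(\xi(w)-\eta(w))$. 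This is precisely the event whose excursion-measure rate was computed in Proposition \ref{prop3.1}, with $z$ replaced by $\bar\xi(w)$ and $y$ replaced by $\xi(w)-\eta(w)$.

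Next, applying the exponential formula for Poisson point processes (as in equation \eqref{pari.dra.dow.pro.c.0}) under $\mathbb{P}_x^{\Phi_q}$, I get
\[
\mathbb{E}_x^{\Phi_q}\left(\mathbf{1}_{\{\tau_a^+<\kappa_r^\xi\wedge\tau_\eta\}}\right)
=\exp\left(-\int_x^a n_{\Phi_q}\!\left(\alpha_{\bar\xi(w)}^+(\varepsilon)<\zeta \text{ or } \overline{\varepsilon}>\bar\xi(w)+(\xi(w)-\eta(w))\right)\mathrm{d}w\right),
\]
where $n_{\Phi_q}$ is the excursion measure of the reflected process under $\mathbb{P}_x^{\Phi_q}$. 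By Proposition \ref{prop3.1} applied to the tilted process, the integrand equals
\[
\frac{W_{\Phi_q}'(\bar\xi(w))\phi_{\Phi_q}(\xi(w)-\eta(w),r)+\chi_{\Phi_q}'(\bar\xi(w),\xi(w)-\eta(w),r)}{W_{\Phi_q}(\bar\xi(w))\phi_{\Phi_q}(\xi(w)-\eta(w),r)+\chi_{\Phi_q}(\bar\xi(w),\xi(w)-\eta(w),r)},
\]
with $\phi_{\Phi_q},\chi_{\Phi_q}$ the functions of Proposition \ref{prop3.1} for the $\Phi_q$-tilted process, whose Laplace transforms are given by \eqref{lap.1}, \eqref{lap.2} with $W,W^{(\theta)},Z^{(\theta)}$ replaced by $W_{\Phi_q},W_{\Phi_q}^{(\theta)},Z_{\Phi_q}^{(\theta)}$.

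The last step is to re-express everything back in terms of the un-tilted scale functions $W^{(q)}$, using the classical identities $W_{\Phi_q}^{(\theta)}(x)=\mathrm{e}^{-\Phi_q x}W^{(\theta+q)}(x)$ and $Z_{\Phi_q}^{(\theta)}(x)=1+\theta\int_0^x \mathrm{e}^{-\Phi_q w}W^{(\theta+q)}(w)\,\mathrm{d}w$. Setting $\chi^{(q)}(x,y,r):=\mathrm{e}^{\Phi_q x}\chi_{\Phi_q}(x,y,r)$, so that $\chi^{(q)\prime}(x,y,r)=\mathrm{e}^{\Phi_q x}(\chi_{\Phi_q}'(x,y,r)+\Phi_q\chi_{\Phi_q}(x,y,r))$ and similarly $W^{(q)\prime}(\bar\xi(w))=\mathrm{e}^{\Phi_q\bar\xi(w)}(W_{\Phi_q}'(\bar\xi(w))+\Phi_q W_{\Phi_q}(\bar\xi(w)))$, the common factor $\mathrm{e}^{\Phi_q\bar\xi(w)}$ and the extra $\Phi_q$-terms cancel between numerator and denominator of the integrand, leaving exactly the ratio in the statement; substituting the tilting identities into the Laplace transforms \eqref{lap.1}, \eqref{lap.2} yields \eqref{Lap.4.1} and \eqref{Lap.4.2} after a short computation (in particular $Z_{\Phi_q}^{(\theta)}(y)=1+\theta\int_0^y \mathrm{e}^{-\Phi_q w}W^{(\theta+q)}(w)\,\mathrm{d}w$ and $W_{\Phi_q}^{(\theta,-\theta)}(x+y)=\mathrm{e}^{-\Phi_q(x+y)}W_y^{(\theta+q,-\theta)}(x+y)$ give precisely the stated right-hand sides after multiplying through by $\mathrm{e}^{\Phi_q\cdot}$). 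I expect the main obstacle to be the bookkeeping in this final translation step — verifying that the $\mathrm{e}^{\Phi_q}$ prefactors and $\Phi_q$-correction terms cancel consistently in both the numerator/denominator ratio and inside the definition of $\chi^{(q)}$, and checking that the shifted scale function $W_y^{(\theta,-\theta)}$ transforms correctly under the tilt; the probabilistic content is entirely inherited from Proposition \ref{prop3.1} and the Poisson exponential formula, so no genuinely new estimate is needed.
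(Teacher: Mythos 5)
Your overall route is exactly the paper's: tilt to $\mathbb{P}^{\Phi_q}$, describe the event $\{\tau_a^+<\kappa_r^\xi\wedge\tau_\eta\}$ excursion by excursion, apply the exponential formula for the Poisson point process of excursions together with Proposition \ref{prop3.1} for the tilted process, and translate back via $W^{(\theta)}_{\Phi_q}(z)=\mathrm{e}^{-\Phi_q z}W^{(\theta+q)}(z)$, $Z^{(\theta)}_{\Phi_q}(y)=1+\theta\int_0^y\mathrm{e}^{-\Phi_q w}W^{(\theta+q)}(w)\,\mathrm{d}w$; your derivation of \eqref{Lap.4.1}--\eqref{Lap.4.2} along these lines is fine. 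However, there are two concrete errors in your bookkeeping. First, the tilting identity is not $\mathbb{E}_x\bigl(\mathrm{e}^{-q\tau_a^+}\mathbf{1}_{\{\tau_a^+<\kappa_r^\xi\wedge\tau_\eta\}}\bigr)=\mathbb{P}_x^{\Phi_q}\bigl(\tau_a^+<\kappa_r^\xi\wedge\tau_\eta\bigr)$: since the Radon--Nikodym density stopped at $\tau_a^+$ is $\mathrm{e}^{\Phi_q(X(\tau_a^+)-x)-q\tau_a^+}$ and $X(\tau_a^+)=a$ by upward creeping, the correct relation carries the prefactor $\mathrm{e}^{-\Phi_q(a-x)}$, i.e. $\mathbb{E}_x\bigl(\mathrm{e}^{-q\tau_a^+}\mathbf{1}_{\{\cdot\}}\bigr)=\mathrm{e}^{-\Phi_q(a-x)}\,\mathbb{P}_x^{\Phi_q}(\cdot)$. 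Second, your final cancellation claim is false: with $W^{(q)}(z)=\mathrm{e}^{\Phi_q z}W_{\Phi_q}(z)$ and $\chi^{(q)}(z,y,r)=\mathrm{e}^{\Phi_q z}\chi_{\Phi_q}(z,y,r)$ one gets
\begin{equation*}
\frac{W^{(q)\prime}(\bar\xi(w))\,\phi_{\Phi_q}+\chi^{(q)\prime}(\bar\xi(w),\cdot)}{W^{(q)}(\bar\xi(w))\,\phi_{\Phi_q}+\chi^{(q)}(\bar\xi(w),\cdot)}
=\frac{W_{\Phi_q}^{\prime}(\bar\xi(w))\,\phi_{\Phi_q}+\chi_{\Phi_q}^{\prime}(\bar\xi(w),\cdot)}{W_{\Phi_q}(\bar\xi(w))\,\phi_{\Phi_q}+\chi_{\Phi_q}(\bar\xi(w),\cdot)}+\Phi_q ,
\end{equation*}
so only the exponential prefactor cancels between numerator and denominator, while the $\Phi_q$-correction survives as an additive shift.

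These two mistakes happen to offset each other: integrating the $+\Phi_q$ shift over $(x,a)$ contributes $\mathrm{e}^{\Phi_q(a-x)}$ to the exponential, which is exactly what the omitted prefactor $\mathrm{e}^{-\Phi_q(a-x)}$ must absorb — this is precisely how the paper's proof closes the computation. So your final formula agrees with the theorem, but only because two wrong intermediate statements compensate; as written, neither the tilting step nor the translation step is correct, and both need to be repaired (restore the factor $\mathrm{e}^{-\Phi_q(a-x)}$, and record the additive $\Phi_q$ instead of claiming it cancels) for the argument to be a proof.
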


\begin{proof}[Proof:]\,\,\,
By \eqref{n.pari.dra.dow.pro.c.0} and similar argument as in \eqref{pari.dra.dow.pro.c.0} we have
\begin{eqnarray}\label{pari.dra.dow.pro.}
\mathbb{P}_{x}\left(\tau_{a}^{+}<\kappa_{r}^{\xi}\wedge \tau_{\eta}\right)
\hspace{-0.3cm}&=&\hspace{-0.3cm}
\mathbb{E}_{x}\left(
\prod\limits_{t\leq a-x}\mathbf{1}_{\{\alpha_{\overline{\xi}(x+t)}^{+}(\varepsilon_{t})= \zeta(\varepsilon_{t}),\,\overline{\varepsilon}_{t}\leq \overline{\eta}(x+t)\}}\right)
\nonumber\\
\hspace{-0.3cm}&=&\hspace{-0.3cm}
\exp\left(-\int_{x}^{a}n\left(\alpha_{\overline{\xi}(w)}^{+}(\varepsilon)<\zeta \mbox{ or } \overline{\varepsilon}> \overline{\xi}(w)+\xi(w)-\eta(w)\right)\,\mathrm{d}w\right)
\nonumber\\
\hspace{-0.3cm}&=&\hspace{-0.3cm}
\exp\left(-\int_{x}^{a}
\frac{W^{\prime}(\overline{\xi}(w))\phi(\xi(w)-\eta(w),r)
+\chi^{\prime}(\overline{\xi}(w),\xi(w)-\eta(w),r)}
{W(\overline{\xi}(w))\phi(\xi(w)-\eta(w),r)+\chi(\overline{\xi}(w),\xi(w)-\eta(w),r)}
\,\mathrm{d}w\right),
\end{eqnarray}
where $\overline{\eta}(w):=w-\eta(w)$.
By \eqref{pari.dra.dow.pro.} together with a change of measure, one has
\begin{eqnarray}\label{renewal.iden.via.n}
\hspace{-0.3cm}&&\hspace{-0.3cm}
\mathbb{E}_{x}\left(\mathrm{e}^{-q \tau_{a}^{+}}\mathbf{1}_{\{\tau_{a}^{+}<\kappa_{r}^{\xi}\wedge \tau_{\eta}\}}\right)
\nonumber\\
\hspace{-0.3cm}&=&\hspace{-0.3cm}
\mathrm{e}^{-\Phi_{q}\left(a-x\right)}\mathbb{P}_{x}^{\Phi_{q}}\left(\tau_{a}^{+}<\kappa_{r}^{\xi}\wedge \tau_{\eta}\right)
\nonumber\\
\hspace{-0.3cm}&=&\hspace{-0.3cm}
\mathrm{e}^{-\Phi_{q}\left(a-x\right)}
\exp\left(-\int_{x}^{a}n_{_{\Phi_{q}}}\left(\alpha_{\overline{\xi}(w)}^{+}(\varepsilon)<\zeta \mbox{ or }
\overline{\varepsilon}> \overline{\eta}(w)\right)\,\mathrm{d}w\right)
\nonumber\\
\hspace{-0.3cm}&=&\hspace{-0.3cm}
\exp\left(-\Phi_{q}\left(a-x\right)-\int_{x}^{a}
\frac{W_{\Phi_{q}}^{\prime}(\overline{\xi}(w))\phi_{_{\Phi_{q}}}(\xi(w)-\eta(w),r)
+\chi_{_{\Phi_{q}}}^{\prime}(\overline{\xi}(w),\xi(w)-\eta(w),r)}
{W_{\Phi_{q}}(\overline{\xi}(w))\phi_{_{\Phi_{q}}}(\xi(w)-\eta(w),r)+\chi_{_{\Phi_{q}}}(\overline{\xi}(w),\xi(w)-\eta(w),r)}
\,\mathrm{d}w\right)
\nonumber\\
\hspace{-0.3cm}&=&\hspace{-0.3cm}
\exp\left(-\int_{x}^{a}
\frac{W^{(q)\prime}(\overline{\xi}(w))\phi_{_{\Phi_{q}}}(\xi(w)-\eta(w),r)
+\chi^{(q)\prime}(\overline{\xi}(w),\xi(w)-\eta(w),r)}
{W^{(q)}(\overline{\xi}(w))\phi_{_{\Phi_{q}}}(\xi(w)-\eta(w),r)+\chi^{(q)}(\overline{\xi}(w),\xi(w)-\eta(w),r)}
\,\mathrm{d}w\right),\nonumber
\end{eqnarray}
where
\[W_{\Phi_{q}}(x)=\mathrm{e}^{-\Phi_{q}x}W^{(q)}(x), \,\,\, \,\,\, \chi_{_{\Phi_{q}}}(x,y,r)=\mathrm{e}^{-\Phi_{q}x}\chi^{(q)}(x,y,r).\]
and $n_{_{\Phi_{q}}}$ represents the excursion measure under the new probability measure $\mathbb{P}_{x}^{\Phi_{q}}$.
\end{proof}

\vspace{0.3cm}
We next provide a version of Theorem \ref{thm00} for $\eta\equiv-\infty$.

 \begin{cor}\label{}
For $x\in (-\infty, a)$, we have
\begin{eqnarray}\label{newv.tw.si.}
\mbox{\,\,\,\,\,}\mathbb{E}_{x}\left(\mathrm{e}^{-q \tau_{a}^{+}}\mathbf{1}_{\{\tau_{a}^{+}<\kappa_{r}^{\xi}\}}\right)
%=\lim\limits_{c\uparrow\infty}\mathbb{E}_{x}\left(\mathrm{e}^{-q \tau_{a}^{+}}\mathbf{1}_{\{\tau_{a}^{+}<\kappa_{r}^{\xi}\wedge \tau_{-c}^{-}\}}\right)
=\exp\left(-\int_{x}^{a} \frac{\ell^{(q)\prime}_{r}(\overline{\xi}(w))}{\ell_{r}^{(q)}(\overline{\xi}(w))}\,\mathrm{d}w\right).
\end{eqnarray}
%\begin{eqnarray}\label{par.dra.dow.pro.}
%\mathbb{P}_{x}\left(\kappa_{r}^{\xi}<\infty\right)
%=1-\exp\left(-\int_{x}^{\infty} \frac{\ell_{r}^{\prime}(\overline{\xi}(z))}{\ell_{r}(\overline{\xi}(z))}\,\mathrm{d}z\right).
%\nonumber
%\end{eqnarray}
 \end{cor}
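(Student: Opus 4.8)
The plan is to obtain \eqref{newv.tw.si.} as the degenerate case $\eta\equiv-\infty$ of Theorem \ref{thm00}. The cleanest route reuses the excursion-theoretic computation behind that theorem. First I would recall, as in \eqref{renewal.iden.via.n}, that an exponential change of measure together with the fact that $X$ creeps upward (so $X(\tau_{a}^{+})=a$ on $\{\tau_{a}^{+}<\infty\}$) gives
\[
\mathbb{E}_{x}\left(\mathrm{e}^{-q\tau_{a}^{+}}\mathbf{1}_{\{\tau_{a}^{+}<\kappa_{r}^{\xi}\}}\right)=\mathrm{e}^{-\Phi_{q}(a-x)}\,\mathbb{P}_{x}^{\Phi_{q}}\left(\tau_{a}^{+}<\kappa_{r}^{\xi}\right).
\]
Then, repeating \eqref{pari.dra.dow.pro.} under $\mathbb{P}_{x}^{\Phi_{q}}$ but with $\eta\equiv-\infty$, so that $\overline{\eta}\equiv+\infty$ and the excursion-height constraint $\{\overline{\varepsilon}_{t}\le\overline{\eta}(x+t)\}$ becomes vacuous, the exponential formula for the Poisson point process of excursions yields $\mathbb{P}_{x}^{\Phi_{q}}(\tau_{a}^{+}<\kappa_{r}^{\xi})=\exp\big(-\int_{x}^{a}n_{\Phi_{q}}(\alpha_{\overline{\xi}(w)}^{+}(\varepsilon)<\zeta)\,\mathrm{d}w\big)$, where $n_{\Phi_{q}}$ is the excursion measure of the reflected process under $\mathbb{P}^{\Phi_{q}}$.

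Next I would invoke Corollary \ref{rem3.1.n}, applied to the spectrally negative L\'evy process $(X,\mathbb{P}^{\Phi_{q}})$, to identify $n_{\Phi_{q}}(\alpha_{\overline{\xi}(w)}^{+}(\varepsilon)<\zeta)=\ell_{r,\Phi_{q}}'(\overline{\xi}(w))/\ell_{r,\Phi_{q}}(\overline{\xi}(w))$, where $\ell_{r,\Phi_{q}}$ denotes the function $\ell_{r}$ associated with the tilted process, i.e.\ built from its $0$-scale function $W_{\Phi_{q}}$. The remaining bookkeeping step is to translate $\ell_{r,\Phi_{q}}$ back into $\ell_{r}^{(q)}$: since $W_{\Phi_{q}}(z)=\mathrm{e}^{-\Phi_{q}z}W^{(q)}(z)$, the definition of $\ell_{r}$ for the tilted process together with the change-of-measure representation of $\ell_{r}^{(q)}$ recalled in Section 2 gives $\ell_{r,\Phi_{q}}(z)=\mathrm{e}^{-\Phi_{q}z-qr}\ell_{r}^{(q)}(z)$, hence $\ell_{r,\Phi_{q}}'(z)/\ell_{r,\Phi_{q}}(z)=-\Phi_{q}+\ell_{r}^{(q)\prime}(z)/\ell_{r}^{(q)}(z)$. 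Substituting this into the exponent and noting that $-\int_{x}^{a}(-\Phi_{q})\,\mathrm{d}w=\Phi_{q}(a-x)$ cancels exactly the prefactor $\mathrm{e}^{-\Phi_{q}(a-x)}$, one is left with precisely \eqref{newv.tw.si.}.

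An equally valid route, if one prefers to argue at the level of Theorem \ref{thm00} itself, is to let $\eta(z)\downarrow-\infty$ directly: since $\tau_{\eta}\uparrow\infty$ almost surely, monotone convergence identifies $\lim_{\eta\downarrow-\infty}\mathbb{E}_{x}(\mathrm{e}^{-q\tau_{a}^{+}}\mathbf{1}_{\{\tau_{a}^{+}<\kappa_{r}^{\xi}\wedge\tau_{\eta}\}})$ with the left-hand side of \eqref{newv.tw.si.}, while the same limiting computation as in the proof of Corollary \ref{rem3.1.n}, carried out for the $\Phi_{q}$-tilted process, shows that $W^{(q)}(x)\phi_{\Phi_{q}}(y,r)+\chi^{(q)}(x,y,r)\to\mathrm{e}^{-qr}\ell_{r}^{(q)}(x)$ and $W^{(q)\prime}(x)\phi_{\Phi_{q}}(y,r)+\chi^{(q)\prime}(x,y,r)\to\mathrm{e}^{-qr}\ell_{r}^{(q)\prime}(x)$ as $y\uparrow\infty$, so that the integrand of Theorem \ref{thm00} converges to $\ell_{r}^{(q)\prime}(\overline{\xi}(w))/\ell_{r}^{(q)}(\overline{\xi}(w))$. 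I do not expect any conceptual difficulty here; the main obstacle is purely computational, namely keeping the $\mathrm{e}^{-\Phi_{q}(\cdot)}$ and $\mathrm{e}^{-qr}$ factors under control throughout the change-of-measure identification of $\ell_{r,\Phi_{q}}$ (and of the limits of $\chi^{(q)},\phi_{\Phi_{q}}$) so that the exponential prefactor cancels cleanly and no $W^{(q)}$-type or $\ell_{r,\Phi_{q}}$-type remainder is left behind.
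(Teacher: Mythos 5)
Your proposal is correct, and in fact contains the paper's own argument as its second route: the paper proves the corollary exactly by taking $\eta\equiv-c$ and letting $c\uparrow\infty$ in Theorem \ref{thm00}, identifying $\lim_{y\uparrow\infty}\bigl(W^{(q)}(x)\phi_{_{\Phi_{q}}}(y,r)+\chi^{(q)}(x,y,r)\bigr)=\mathrm{e}^{-qr}\ell_{r}^{(q)}(x)$ (and the analogous limit for the derivatives) through the Laplace-transform computation with Kendall's identity and \eqref{t.s.limit}, precisely as you describe. Your first route is a genuinely different, and somewhat leaner, organization of the same ingredients: instead of redoing the $y\uparrow\infty$ limit at the level of the ratio in Theorem \ref{thm00}, you apply the already-established $y=\infty$ statement (Corollary \ref{rem3.1.n}) to the tilted process $(X,\mathbb{P}^{\Phi_{q}})$ and then convert back via the identity $\ell_{r,\Phi_{q}}(z)=\mathrm{e}^{-\Phi_{q}z-qr}\ell_{r}^{(q)}(z)$; this identity is indeed correct and follows directly from $W_{\Phi_{q}}(z)=\mathrm{e}^{-\Phi_{q}z}W^{(q)}(z)$ together with the paper's first displayed representation of $\ell_{r}^{(q)}$ in terms of $\mathbb{P}^{\Phi_{q}}(X(r)\in\mathrm{d}z)$, so the $-\Phi_{q}$ term in the logarithmic derivative cancels the prefactor $\mathrm{e}^{-\Phi_{q}(a-x)}$ exactly as you claim. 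What this buys is that the only limit taken is the one already absorbed into Corollary \ref{rem3.1.n}, at the price of invoking that corollary under the tilted law and redoing the change-of-measure and exponential-formula steps (measurability of $\{\tau_{a}^{+}<\kappa_{r}^{\xi}\}$ in $\mathcal{F}_{\tau_{a}^{+}}$, $X(\tau_{a}^{+})=a$ by absence of positive jumps, and the Poisson point process representation under $\mathbb{P}^{\Phi_{q}}$), all of which are exactly the manipulations already carried out in the proof of Theorem \ref{thm00}, so no new gap arises. The only point worth making explicit in either route is the justification of $\mathbf{1}_{\{\tau_{a}^{+}<\kappa_{r}^{\xi}\wedge\tau_{\eta}\}}\uparrow\mathbf{1}_{\{\tau_{a}^{+}<\kappa_{r}^{\xi}\}}$ (local boundedness of paths gives $\tau_{-c}^{-}>\tau_{a}^{+}$ eventually on $\{\tau_{a}^{+}<\infty\}$), which you note and which matches the paper's implicit use of monotone convergence.
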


\begin{proof}
By definition we have
$$W^{(\theta+q,-\theta)}_{y}(x+y)=W^{(\theta+q)}(x+y)-\theta\int_{0}^{x}W^{(q)}(w)W^{(\theta+q)}(x+y-w)\,\mathrm{d}w,$$
which together with \eqref{t.s.limit} yields
\begin{eqnarray}
\label{rem4.1.lap.0}
\lim\limits_{y\uparrow\infty}\frac{W^{(\theta+q,-\theta)}_{y}(x+y)}{\theta W^{(\theta+q)}(y)}
\hspace{-0.3cm}&=&\hspace{-0.3cm}
\mathrm{e}^{\Phi_{\theta+q}x}
\left(\frac{1}{\theta}-\int_{0}^{x}W^{(q)}(w)\,
\mathrm{e}^{-\Phi_{\theta+q}w}\,\mathrm{d}w\right)
\nonumber\\
\hspace{-0.3cm}&=&\hspace{-0.3cm}
\mathrm{e}^{\Phi_{\theta+q}x}
\left(\frac{1}{\theta}-\int_{0}^{\infty}W^{(q)}(w)\,
\mathrm{e}^{-\Phi_{\theta+q}w}\,\mathrm{d}w
+\int_{x}^{\infty}W^{(q)}(w)\,
\mathrm{e}^{-\Phi_{\theta+q}w}\,\mathrm{d}w\right)
\nonumber\\
\hspace{-0.3cm}&=&\hspace{-0.3cm}
\int_{0}^{\infty}W^{(q)}(x+w)\,\mathrm{e}^{-\Phi_{\theta+q} w}\,\mathrm{d}w
,
\end{eqnarray}
which coincides with the Laplace transform (in $r$) of $\,\mathrm{e}^{-qr}\ell_{r}^{(q)}(x)$ as follows.
\begin{eqnarray}\label{la.id.2}
\int_{0}^{\infty}\mathrm{e}^{-\theta r}\mathrm{e}^{-qr}\ell_{r}^{(q)}(x)\,\mathrm{d}r
\hspace{-0.3cm}&=&\hspace{-0.3cm}
\int_{0}^{\infty}\mathrm{e}^{-(\theta+q)r}\int_{0}^{\infty}W^{(q)}(x+z)
\frac{z}{r}\mathbb{P}\left(X(r)\in\mathrm{d}z\right)\,\mathrm{d}r
\nonumber\\
\hspace{-0.3cm}&=&\hspace{-0.3cm}
\int_{0}^{\infty}\mathrm{e}^{-(\theta+q)r}\int_{0}^{\infty}W^{(q)}(x+z)
\mathbb{P}(\tau_{z}^{+}\in\mathrm{d}r)\mathrm{d}z
\nonumber\\
\hspace{-0.3cm}&=&\hspace{-0.3cm}
\int_{0}^{\infty}W^{(q)}(x+z)
\mathbb{E}(\mathrm{e}^{-(\theta+q) \tau_{z}^{+}})\mathrm{d}z
\nonumber\\
\hspace{-0.3cm}&=&\hspace{-0.3cm}
\int_{0}^{\infty}W^{(q)}(x+w)\,\mathrm{e}^{-\Phi_{\theta+q} w}\,\mathrm{d}w
,
\end{eqnarray}
where we have used the Kendall's identity \[\frac{z}{r}\mathbb{P}\left(X(r)\in\mathrm{d}z\right)\,\mathrm{d}r=\mathbb{P}(\tau_{z}^{+}\in\mathrm{d}r)\mathrm{d}z,\,z,r\geq0.\]
From \eqref{Lap.4.1} and \eqref{Lap.4.2} one knows that
\begin{eqnarray}
\label{rem4.1.lap.1}
\int_{0}^{\infty}\mathrm{e}^{-\theta r}\left(W^{(q)}(x)\phi_{_{\Phi_{q}}}(y,r)+\chi^{(q)}(x,y,r)\right)\mathrm{d}r
=\frac{W_{y}^{(\theta+q,-\theta)}(x+y)}{\theta W^{(\theta+q)}(y)}
.
\end{eqnarray}
Combining \eqref{rem4.1.lap.0}, \eqref{la.id.2} and \eqref{rem4.1.lap.1}, %and \eqref{la.id.3},  %the above arguments and
 one can conclude
%$$\lim\limits_{y\uparrow\infty}\phi_{_{\Phi_{q}}}(y,r)
%=\int_{0}^{\infty}\frac{z}{r}\mathbb{P}^{\Phi_{q}}\left(X(r)\in\mathrm{d}z\right),$$
%and
%$$
%\lim\limits_{y\uparrow\infty}\chi^{(q)}(x,y,r)
%=\mathrm{e}^{-qr}\ell_{r}^{(q)}(x)-W^{(q)}(x)\int_{0}^{\infty}\frac{z}{r}\mathbb{P}^{\Phi_{q}}\left(X(r)\in\mathrm{d}z\right)
%.$$
%Hence,
$$\lim\limits_{y\uparrow\infty}\left(W^{(q)}(x)\phi_{_{\Phi_{q}}}(y,r)+\chi^{(q)}(x,y,r)\right)=\mathrm{e}^{-qr}\ell_{r}^{(q)}(x)
.$$
By the same arguments, we have
$$\lim\limits_{y\uparrow\infty}\left(W^{(q)\prime}(x)\phi_{_{\Phi_{q}}}(y,r)+\chi^{(q)\prime}(x,y,r)\right)
=\mathrm{e}^{-qr}\ell_{r}^{(q)\prime}(x)
.$$
Hence, we have
\begin{eqnarray}
\hspace{-0.3cm}&&\hspace{-0.3cm}
\lim\limits_{c\uparrow\infty}\frac{W^{(q)\prime}(\overline{\xi}(w))\phi_{_{\Phi_{q}}}(c+\xi(w),r)
+\chi^{(q)\prime}(\overline{\xi}(w),c+\xi(w),r)}
{W^{(q)}(\overline{\xi}(w))\phi_{_{\Phi_{q}}}(c+\xi(w),r)+\chi^{(q)}(\overline{\xi}(w),c+\xi(w),r)}
=\frac{\ell_{r}^{(q)\prime}(\overline{\xi}(w))}{\ell_{r}^{(q)}(\overline{\xi}(w))}.
\nonumber
\end{eqnarray}
It then follows easily from Theorem \ref{thm00} that
\begin{eqnarray}%\label{newv.tw.si.}
\mbox{\,\,\,\,\,}\mathbb{E}_{x}\left(\mathrm{e}^{-q \tau_{a}^{+}}\mathbf{1}_{\{\tau_{a}^{+}<\kappa_{r}^{\xi}\}}\right)
=\lim\limits_{c\uparrow\infty}\mathbb{E}_{x}\left(\mathrm{e}^{-q \tau_{a}^{+}}\mathbf{1}_{\{\tau_{a}^{+}<\kappa_{r}^{\xi}\wedge \tau_{-c}^{-}\}}\right)
=\exp\left(-\int_{x}^{a} \frac{\ell^{(q)\prime}_{r}(\overline{\xi}(w))}{\ell_{r}^{(q)}(\overline{\xi}(w))}\,\mathrm{d}w\right),
\nonumber
\end{eqnarray}
which is the desired result.
\end{proof}

\vspace{0.3cm}
\begin{rem}
Letting $\xi\equiv0$ in \eqref{newv.tw.si.}, one recovers (18) of Czarna and Palmowski (2014)
\begin{eqnarray*}\label{}
\mathbb{E}_{x}\left(\mathrm{e}^{-q \tau_{a}^{+}}\mathbf{1}_{\{\tau_{a}^{+}<\kappa_{r}\}}\right)
\hspace{-0.3cm}&=&\hspace{-0.3cm}
\exp\left(-\int_{x}^{a} \frac{\ell^{(q)\prime}_{r}(w)}{\ell_{r}^{(q)}(w)}\,\mathrm{d}w\right)
=\frac{\ell^{(q)}_{r}(x)}{\ell_{r}^{(q)}(a)}.\nonumber
\end{eqnarray*}

Letting $\xi(x)=kx-d$ with $k\in(-\infty,1)$ and $d\in (0,\infty)$ in \eqref{newv.tw.si.}, we have
\begin{eqnarray}
\label{}
\mathbb{E}_{x}\left(\mathrm{e}^{-q\tau_{a}^{+}}
\mathbf{1}_{\{\tau_{a}^{+}<\kappa_{r}^{\xi}\}}\right)
\hspace{-0.3cm}&=&\hspace{-0.3cm}
\left(\frac{\ell_{r}^{(q)}((1-k)x+d)}{ \ell_{r}^{(q)}((1-k)a+d)}\right)^{\frac{1}{1-k}}.\nonumber
\end{eqnarray}

By  \eqref{newv.tw.si.}, one can also obtain the draw-down Parisian ruin probability
\begin{eqnarray}\label{par.dra.dow.pro.}
\mathbb{P}_{x}\left(\kappa_{r}^{\xi}<\infty\right)
=1-\exp\left(-\int_{x}^{\infty} \frac{\ell_{r}^{\prime}(\overline{\xi}(z))}{\ell_{r}(\overline{\xi}(z))}\,\mathrm{d}z\right).
\nonumber
\end{eqnarray}
\end{rem}

\vspace{0.3cm}
The following result presents the joint Laplace transform involving the draw-down Parisian ruin time, the position of $X$ at the draw-down Parisian ruin time and its running supremum until the draw-down Parisian ruin time. It generalizes Theorem 3.1 in Loeffen et al. (2018).

\begin{thm}\label{thm2}
For any  $q,\lambda\in[0,\infty)$, $x\in(-\infty,\infty)$, $a \geq x$ and any bounded measurable function $\varphi:\,(-\infty,\infty)\mapsto (-\infty,\infty)$, we have
\begin{eqnarray}\label{joint.Lap.}
\hspace{-0.3cm}&&\hspace{-0.3cm}
\mathbb{E}_{x}\left(\mathrm{e}^{-q \left(\kappa_{r}^{\xi}-r\right)}\mathrm{e}^{\lambda X(\kappa_{r}^{\xi})-\psi\left(\lambda\right)r}\varphi(\bar{X}(\kappa_{r}^{\xi}))\mathbf{1}_{\{\kappa_{r}^{\xi}<\tau_{a}^{+}\}}\right)
\nonumber\\
\hspace{-0.3cm}&=&\hspace{-0.3cm}
\int_{x}^{a}
\mathrm{e}^{\lambda \xi(w)}\varphi(w)\exp\left(-\int_{x}^{w} \frac{\ell^{(q)\prime}_{r}(\overline{\xi}(z))}{\ell_{r}^{(q)}(\overline{\xi}(z))}\,\mathrm{d}z\right)
\left[
\frac{\ell_{r}^{(q)\prime}(\overline{\xi}(w))}{\ell_{r}^{(q)}(\overline{\xi}(w))}
\left(\mathrm{e}^{\lambda \overline{\xi}(w)}-\left(\psi(\lambda)-q\right)\right.\right.
\nonumber\\
\hspace{-0.3cm}&&\hspace{-0.3cm}
\times
\left.\left.\left(\mathrm{e}^{\lambda \overline{\xi}(w)}\int_{0}^{\overline{\xi}(w)}W^{(q)}(z)\mathrm{e}^{-\lambda z}\mathrm{d}z+\int_{0}^{r}
\mathrm{e}^{-\psi(\lambda) s}\,\ell_{s}^{(q)}(\overline{\xi}(w))\mathrm{d}s\right)\right)\right.
\nonumber\\
\hspace{-0.3cm}&&\hspace{-0.3cm}
\left.-
\lambda \mathrm{e}^{\lambda \overline{\xi}(w)}+\left(\psi(\lambda)-q\right)
\left(\lambda \mathrm{e}^{\lambda \overline{\xi}(w)}\int_{0}^{\overline{\xi}(w)}W^{(q)}(z)\mathrm{e}^{-\lambda z}\mathrm{d}z
\right.\right.
\nonumber\\
\hspace{-0.3cm}&&\hspace{-0.3cm}
\left.\left.
+W^{(q)}(\overline{\xi}(w))+\int_{0}^{r}
\mathrm{e}^{-\psi(\lambda) s}\,\ell_{s}^{(q)\prime}(\overline{\xi}(w))\mathrm{d}s\right)\right]\,\mathrm{d}w.
\end{eqnarray}
\end{thm}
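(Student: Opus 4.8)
The plan is to reduce Theorem \ref{thm2} to the excursion-theoretic identity of Proposition \ref{lemma2} by the same Poisson-point-process/compensation-formula scheme that was used to prove Propositions \ref{lemma2} and \ref{lemma3}. First I would observe that, under $\mathbb{P}_x$, the draw-down Parisian ruin time $\kappa_r^\xi$ occurs inside the first excursion of the reflected process $\{\bar X(t)-X(t)\}$ whose sojourn above the moving level fails the Parisian test; more precisely, writing $g$ for the left-end point of a generic excursion $\varepsilon_g$ and $L(g)$ for the local time at that point, the event $\{\tau_a^+<\kappa_r^\xi\}$ is the event that for every excursion with $x+L(g)\le a$ one has $\alpha^+_{\overline{\xi}(x+L(g))}(\varepsilon_g)=\zeta_g$, exactly as in \eqref{pari.dra.dow.pro.}. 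On $\{\kappa_r^\xi<\tau_a^+\}$ the running supremum $\bar X(\kappa_r^\xi)$ equals $x+L(g)$ for the distinguished excursion $g$, the position $X(\kappa_r^\xi)$ equals $(x+L(g))-\varepsilon_g\bigl(\alpha^+_{\overline{\xi}(x+L(g))}(\varepsilon_g)+r\bigr)$, and $\kappa_r^\xi-r = g + \alpha^+_{\overline{\xi}(x+L(g))}(\varepsilon_g)$.

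Next I would apply the compensation formula (Corollary 4.11 of Bertoin (1996), as cited) to the sum over excursions, splitting $\mathrm{e}^{-q(\kappa_r^\xi-r)}=\mathrm{e}^{-qg}\,\mathrm{e}^{-q(\alpha^+-r)}$ and, crucially, inserting the factor $\mathrm{e}^{\lambda X(\kappa_r^\xi)-\psi(\lambda)r} = \mathrm{e}^{\lambda\xi(w)}\cdot\mathrm{e}^{\lambda(\overline{\xi}(w)-\varepsilon(\alpha^+_{\overline{\xi}(w)}(\varepsilon)+r))-\psi(\lambda)r}$ with $w=x+L(g)$, where I have used $w=\overline{\xi}(w)+\xi(w)$. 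The product over earlier excursions $h<g$ together with the $\mathrm{e}^{-qg}$ weight reassembles, via the substitution $g\mapsto L^{-1}(w-)$ and $\mathbb{E}_x(\mathrm{e}^{-q\tau_w^+}\mathbf{1}_{\{\tau_w^+<\kappa_r^\xi\}})$, into precisely the two-sided-exit quantity computed in \eqref{newv.tw.si.}, namely $\exp\bigl(-\int_x^w \ell_r^{(q)\prime}(\overline{\xi}(z))/\ell_r^{(q)}(\overline{\xi}(z))\,\mathrm{d}z\bigr)$. The remaining integral against $n(\mathrm{d}\varepsilon)$ over the distinguished excursion, at level $\overline{\xi}(w)$, is exactly the excursion-measure Laplace transform evaluated at $a$ replaced by $\overline{\xi}(w)$ in Proposition \ref{lemma2}, which supplies the bracketed expression in \eqref{joint.Lap.}. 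Collecting terms and integrating $\mathrm{d}w=\mathrm{d}L(w)$ over $[x,a]$ gives the claimed formula, the factor $\varphi(w)$ entering because $\bar X(\kappa_r^\xi)=w$ on the relevant event.

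I would carry this out in the order: (1) describe the excursion decomposition of $\kappa_r^\xi$ and express $(\kappa_r^\xi-r, X(\kappa_r^\xi), \bar X(\kappa_r^\xi))$ in terms of $(g, L(g), \varepsilon_g, \alpha^+_{\overline{\xi}(x+L(g))}(\varepsilon_g))$; (2) apply the compensation formula to turn the sum over excursions into $\int_x^a \mathbb{E}_x(\mathrm{e}^{-q\tau_w^+}\mathbf{1}_{\{\tau_w^+<\kappa_r^\xi\}})\, n(\cdots\text{at level }\overline{\xi}(w))\,\mathrm{d}w$, exactly mirroring the display chain in the proof of Proposition \ref{lemma2}; (3) substitute \eqref{newv.tw.si.} for the first factor and Proposition \ref{lemma2} (with $a\rightsquigarrow\overline{\xi}(w)$, and the extra $\mathrm{e}^{\lambda\xi(w)}$ pulled out) for the second; (4) tidy up.

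The main obstacle I anticipate is the bookkeeping in step (1)–(2): making rigorous that on $\{\kappa_r^\xi<\tau_a^+\}$ the Parisian-ruin data are measurable functionals of a \emph{single} tagged excursion of the reflected process and that the running supremum is frozen at $x+L(g)$ during that excursion, so that the compensation formula applies cleanly with the moving level $\overline{\xi}(x+L(w))$ plugged into $\alpha^+$. This is where one must be careful about the definition of $\alpha_a^+(\varepsilon)$ relative to the draw-down function and about the interchange of the product $\prod_{h<g}$ with the conditional expectation; once this is in place, the algebraic identification of the integrand with the right-hand side of \eqref{joint.Lap.} is purely a matter of substituting the already-established Laplace transforms and is routine. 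A secondary technical point is justifying the use of Proposition \ref{lemma2} at the argument $\overline{\xi}(w)$ together with the bounded measurable $\varphi$, but since $\varphi(w)$ only multiplies the $w$-integrand and does not interact with the excursion measure, this reduces to Fubini together with boundedness of $\varphi$.
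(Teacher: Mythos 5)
Your proposal matches the paper's own argument: the paper likewise expresses the event in terms of a single tagged excursion, applies the compensation formula to obtain $\int_x^a \varphi(w)\,\mathbb{E}_x\bigl(\mathrm{e}^{-q\tau_w^+}\mathbf{1}_{\{\tau_w^+<\kappa_r^\xi\}}\bigr)\,n\bigl(\mathrm{e}^{-q\alpha^+_{\overline{\xi}(w)}(\varepsilon)}\mathrm{e}^{\lambda(w-\varepsilon(\alpha^+_{\overline{\xi}(w)}(\varepsilon)+r))-\psi(\lambda)r}\mathbf{1}_{\{\alpha^+_{\overline{\xi}(w)}(\varepsilon)<\zeta\}}\bigr)\,\mathrm{d}w$, and then substitutes \eqref{newv.tw.si.} and Proposition \ref{lemma2} at level $\overline{\xi}(w)$ after pulling out $\mathrm{e}^{\lambda\xi(w)}$, exactly as you describe. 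Your proof is correct and follows essentially the same route.
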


\begin{proof}[Proof:]\,\,\,
By \eqref{newv.tw.si.} and  the compensation formula, we have
\begin{eqnarray}\label{}
\hspace{-0.3cm}&&\hspace{-0.3cm}
\mathbb{E}_{x}\left(\mathrm{e}^{-q \left(\kappa_{r}^{\xi}-r\right)}\mathrm{e}^{\lambda X(\kappa_{r}^{\xi})-\psi\left(\lambda\right)r}\varphi(\bar{X}(\kappa_{r}^{\xi}))\mathbf{1}_{\{\kappa_{r}^{\xi}<\tau_{a}^{+}\}}\right)
\nonumber\\
\hspace{-0.3cm}&=&\hspace{-0.3cm}
\mathbb{E}_{x}\left(\sum_{g}\mathrm{e}^{-q g}\varphi(x+L(g))\prod\limits_{h<g}\mathbf{1}_{\{\alpha_{\overline{\xi}(x+L(h))}^{+}(\varepsilon_{h})= \,\zeta_{h},\,x+L(g)\leq a\}}
\mathrm{e}^{-q \alpha_{\overline{\xi}(x+L(g))}^{+}(\varepsilon_{g})}\right.
\nonumber\\
\hspace{-0.3cm}&&\hspace{0.5cm}
\left.
\times\mathrm{e}^{\lambda \left(x+L(g)-\varepsilon_{g}\left(\alpha_{\overline{\xi}(x+L(g))}^{+}(\varepsilon_{g})+r\right)\right)
-\psi\left(\lambda\right)r}\mathbf{1}_{\{\alpha_{\overline{\xi}(x+L(g))}^{+}(\varepsilon_{g})< \zeta_{g}\}}\right)
\nonumber\\
\hspace{-0.3cm}&=&\hspace{-0.3cm}
\mathbb{E}_{x}\left(\int_{0}^{\infty}\mathrm{e}^{-q w}\varphi(x+L(w))\prod\limits_{h<w}\mathbf{1}_{\{\alpha_{\overline{\xi}(x+L(h))}^{+}(\varepsilon_{h})= \,\zeta_{h},\,x+L(w)\leq a\}}
\int_{\mathcal{E}}\mathrm{e}^{-q \alpha_{\overline{\xi}(x+L(w))}^{+}(\varepsilon)}\right.
\nonumber\\
\hspace{-0.3cm}&&\hspace{0.5cm}
\left.
\times\mathrm{e}^{\lambda \left(x+L(w)-\varepsilon\left(\alpha_{\overline{\xi}(x+L(w))}^{+}(\varepsilon)+r\right)\right)
-\psi\left(\lambda\right)r}\mathbf{1}_{\{\alpha_{\overline{\xi}(x+L(w))}^{+}(\varepsilon)< \zeta\}}\,n(\,\mathrm{d}\varepsilon)\,\mathrm{d}L(w)\right)
\nonumber\\
\hspace{-0.3cm}&=&\hspace{-0.3cm}
\mathbb{E}_{x}\left(\int_{0}^{a-x}\mathrm{e}^{-q L^{-1}(w-)}\varphi(x+w)
\prod\limits_{h<L^{-1}(w-)}\mathbf{1}_{\{\alpha_{\overline{\xi}(x+L(h))}^{+}(\varepsilon_{h})= \,\zeta_{h}\}}
\right.
\nonumber\\
\hspace{-0.3cm}&&\hspace{0.5cm}
\left.
\times\int_{\mathcal{E}}\mathrm{e}^{-q \alpha_{\overline{\xi}(x+w)}^{+}(\varepsilon)}\mathrm{e}^{\lambda \left(x+w-\varepsilon\left(\alpha_{\overline{\xi}(x+w)}^{+}(\varepsilon)+r\right)\right)
-\psi\left(\lambda\right)r}\mathbf{1}_{\{\alpha_{\overline{\xi}(x+w)}^{+}(\varepsilon)< \zeta\}}\,n(\,\mathrm{d}\varepsilon)\,\mathrm{d}w\right)
\nonumber\\
\hspace{-0.3cm}&=&\hspace{-0.3cm}
\int_{x}^{a}\varphi(w)\mathbb{E}_{x}\left(\mathrm{e}^{-q \tau_{w}^{+}}\mathbf{1}_{\{\tau_{w}^{+}<\kappa_{r}^{\xi}\}}\right)
\,n\left(\mathrm{e}^{-q \alpha_{\overline{\xi}(w)}^{+}(\varepsilon)}\mathrm{e}^{\lambda \left(w-\varepsilon\left(\alpha_{\overline{\xi}(w)}^{+}(\varepsilon)+r\right)\right)
-\psi\left(\lambda\right)r}\mathbf{1}_{\{\alpha_{\overline{\xi}(w)}^{+}(\varepsilon)< \zeta\}}\right)\,\mathrm{d}w
\nonumber\\
\hspace{-0.3cm}&=&\hspace{-0.3cm}
\int_{x}^{a}
\mathrm{e}^{\lambda \xi(w)}\varphi(w)\exp\left(-\int_{x}^{w} \frac{\ell^{(q)\prime}_{r}(\overline{\xi}(z))}{\ell_{r}^{(q)}(\overline{\xi}(z))}\,\mathrm{d}z\right)
\nonumber\\
\hspace{-0.3cm}&&\hspace{0.5cm}
\times n\left(\mathrm{e}^{-q \alpha_{\overline{\xi}(w)}^{+}(\varepsilon)}\mathrm{e}^{\lambda \left(\overline{\xi}(w)-\varepsilon\left(\alpha_{\overline{\xi}(w)}^{+}(\varepsilon)+r\right)\right)
-\psi\left(\lambda\right)r}\mathbf{1}_{\{\alpha_{\overline{\xi}(w)}^{+}(\varepsilon)< \zeta\}}\right)\,\mathrm{d}w,\nonumber
\end{eqnarray}
which together with \eqref{n.2} yields \eqref{joint.Lap.}.
\end{proof}

\vspace{0.3cm}
The following result gives
the potential measure of $X$ involving the draw-down Parisian ruin time.
It generalizes Theorem 4.4 in Loeffen et al. (2018).

\begin{thm}\label{thm3}
For any $q,\lambda\geq0$, $r>0$, $a\geq x$ and any bounded bivariate function $f(x,y)$ that is differentiable with respect to $x$, we have
\begin{eqnarray}\label{pot.mea..1'}
\hspace{-0.3cm}&&\hspace{-0.3cm}
\int_{0}^{\infty}\mathrm{e}^{-q \left(t-r\right)}
\mathbb{E}_{x}\left(f(X(t),\bar{X}(t)); \,t<\kappa_{r}^{\xi}\wedge \tau_{a}^{+}\right)
\mathrm{d}t
\nonumber\\
\hspace{-0.3cm}&=&\hspace{-0.3cm}
\int_{x}^{a}
\exp\left(-\int_{x}^{w} \frac{\ell^{(q)\prime}_{r}(\overline{\xi}(z))}{\ell_{r}^{(q)}(\overline{\xi}(z))}\,\mathrm{d}z\right)
\left[\frac{\ell_{r}^{(q)\prime}(\overline{\xi}(w))}{\ell_{r}^{(q)}(\overline{\xi}(w))}
\left(\int_{0}^{r}\mathrm{e}^{q(r-s)}\mathbb{E}_{}\left(f(w+X(s),w)\right)\mathrm{d}s
\right.\right.
\nonumber\\
\hspace{-0.3cm}&&\hspace{-0.3cm}
\left.-\int_{0}^{\overline{\xi}(w)}W^{(q)}(\overline{\xi}(w)-z)
\mathbb{E}_{}\left(f(z+\xi(w)+X(r),w)\right)\mathrm{d}z
\right.
\nonumber\\
\hspace{-0.3cm}&&\hspace{-0.3cm}
\left.-\int_{0}^{r}\mathbb{E}_{}\left(f(\xi(w)+X(r-s),w)\right)\ell_{s}^{(q)}(\overline{\xi}(w))\mathrm{d}s\right)
\nonumber\\
\hspace{-0.3cm}&&\hspace{-0.3cm}
-
\int_{0}^{r}\mathrm{e}^{q(r-s)}\mathbb{E}_{}\left(\frac{\partial}{\partial x}f(w+X(s),w)\right)
\mathrm{d}s
+\int_{0}^{\overline{\xi}(w)}W^{(q)\prime}(\overline{\xi}(w)-z)\mathbb{E}_{}\left(f(z+\xi(w)+X(r),w)\right)\mathrm{d}z
\nonumber\\
\hspace{-0.3cm}&&\hspace{-0.3cm}
\left.\left.+W^{(q)}(0+)\,\mathbb{E}_{}\left(f(w+X(r),w)\right)
+\int_{0}^{r}\mathbb{E}_{}\left(f(\xi(w)+X(r-s),w)\right)\ell_{s}^{(q)\prime}(\overline{\xi}(w))\mathrm{d}s\right)\right]
\,\mathrm{d}w.\nonumber
\end{eqnarray}
\end{thm}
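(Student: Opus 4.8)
The plan is to mimic the proofs of Theorem \ref{thm2} and Proposition \ref{lemma3}, combining the two-sided exit identity \eqref{newv.tw.si.} with the compensation formula for the Poisson point process of excursions and then substituting the excursion-measure potential identity from Remark \ref{rem0}. The target quantity is $\int_0^\infty \mathrm{e}^{-q(t-r)}\mathbb{E}_x(f(X(t),\bar X(t));\,t<\kappa_r^\xi\wedge\tau_a^+)\,\mathrm{d}t$, and the first step is to split the time integral at the epochs when $X$ attains a new maximum, exactly as in \eqref{h1h2.10}: write it as $h_1(x)+h_2(x)$, where $h_1$ collects the contribution of the set $\{X(t)=\bar X(t)\}$ (which has Lebesgue measure $W^{(q)}(0)\,\bar X(t)$) and $h_2$ collects the contribution of times inside an excursion away from the supremum, captured via an independent exponential clock $e_q$.

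Next I would evaluate $h_1$ as in \eqref{h1.10}: on $\{X(t)=\bar X(t)\}$ one has $t=L^{-1}(L(t))$, $\bar X(t)=x+L(t)$, and the draw-down Parisian survival constraint reduces to $\{L^{-1}(w)<\kappa_r^\xi\}$; using the change of variables $w=L(t)$ and the two-sided exit identity \eqref{newv.tw.si.} (replacing $\tau_{a}^{+}<\kappa_r$ there by $\tau_w^+<\kappa_r^\xi$) gives
\[ h_1(x)=W^{(q)}(0)\,\mathrm{e}^{qr}\int_x^a \exp\!\left(-\int_x^w\frac{\ell_r^{(q)\prime}(\overline{\xi}(z))}{\ell_r^{(q)}(\overline{\xi}(z))}\,\mathrm{d}z\right) f(w,w)\,\mathrm{d}w. \]
For $h_2$, the point $e_q$ lies strictly inside some excursion $\varepsilon_g$ with left endpoint $g$; the constraint $t<\kappa_r^\xi$ translates into the excursion-level condition $\alpha_{\overline{\xi}(x+L(g))}^{+}(\varepsilon_g)>e_q-g-r$ together with the running requirement that all earlier excursions satisfy $\alpha^{+}_{\overline{\xi}(\,\cdot\,)}(\varepsilon_h)=\zeta_h$, and on this excursion $X(e_q)=\bar X(e_q)-\varepsilon_g(e_q-g)=x+L(g)-\varepsilon_g(e_q-g)$ while $\bar X(e_q)=x+L(g)$. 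Applying the compensation formula and the memoryless property of $e_q$, then the change of variables from $L(g)$ to $w$ via $\tau_w^+$, and invoking \eqref{newv.tw.si.} once more, one arrives at
\[ h_2(x)=\int_x^a \exp\!\left(-\int_x^w\frac{\ell_r^{(q)\prime}(\overline{\xi}(z))}{\ell_r^{(q)}(\overline{\xi}(z))}\,\mathrm{d}z\right)\, n\!\left(\int_0^\zeta \mathrm{e}^{-q(t-r)} f(w-\varepsilon(t),w)\,\mathbf{1}_{\{\alpha_{\overline{\xi}(w)}^{+}(\varepsilon)>t-r\}}\,\mathrm{d}t\right)\mathrm{d}w. \]
The final step is to substitute for the inner excursion integral using Remark \ref{rem0}, applied at level $a=\overline{\xi}(w)$ with the bivariate integrand $(u,v)\mapsto f(u+\xi(w),v)$ shifted so that $\overline{\xi}(w)-\varepsilon(t)+\xi(w)=w-\varepsilon(t)$ and with second argument frozen at $w$; the term $W^{(q)}(0)\mathrm{e}^{qr}f(\overline{\xi}(w)+\xi(w),w)=W^{(q)}(0)\mathrm{e}^{qr}f(w,w)$ produced by Remark \ref{rem0} cancels exactly the $h_1$ contribution, and what remains is precisely the bracketed expression in \eqref{pot.mea..1'}.

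I expect the main obstacle to be bookkeeping rather than anything conceptual: one must be careful that the shift $\overline{\xi}(w)\mapsto w$ in the first spatial argument is applied consistently (so that $W^{(q)\prime}(\overline{\xi}(w)-z)$ multiplies $f(z+\xi(w)+X(r),w)$, and the exponential clock's $\mathbb{E}$-shifts in Remark \ref{rem0} land on $w+X(s)$ rather than $\overline{\xi}(w)+X(s)$), and that the $\mathrm{e}^{qr}$, $\mathrm{e}^{-\psi(\lambda)r}$-type prefactors track correctly through the memoryless splitting of $e_q$. A secondary technical point is justifying the interchange of the excursion-measure integral with the expectation and the sum over excursions, which is handled exactly as in the proofs of Theorem \ref{thm2} and Proposition \ref{lemma3} by the compensation formula (Corollary 4.11 of Bertoin (1996)); boundedness of $f$ makes all the integrals finite, so no delicate convergence argument is needed. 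The differentiability of $f$ in its first argument is used only through Remark \ref{rem0} to produce the $\partial_x f$ term.
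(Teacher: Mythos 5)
Your proposal is correct and follows essentially the same route as the paper: the same split into the supremum-set contribution $h_1$ (the paper's $I_1$) and the excursion contribution $h_2$ (the paper's $I_2$) via an independent exponential clock, the same evaluation of $h_1$ through the local-time change of variables and the draw-down exit identity \eqref{newv.tw.si.}, the same compensation-formula computation for $h_2$, and the same final substitution of the bivariate excursion identity \eqref{n.3''} of Remark \ref{rem0} at level $\overline{\xi}(w)$ with the spatially shifted integrand. The bookkeeping you flag (the shift $\overline{\xi}(w)+\xi(w)=w$ in the first argument and the $\mathrm{e}^{qr}$ factor through the memoryless splitting) is exactly how the paper combines \eqref{h1h2.1}, \eqref{I1}, \eqref{I2} and \eqref{n.3''}, so no gap remains.
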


\begin{proof}[Proof:]\,\,\,
For $q,\lambda\geq0$ and $r>0$ with $ a\geq x$, we have
\begin{eqnarray}\label{h1h2.1}
\hspace{-0.3cm}&&\hspace{-0.3cm}
\int_{0}^{\infty}\mathrm{e}^{-q \left(t-r\right)}
\mathbb{E}_{x}\left(f(X(t),\bar{X}(t));\,t<\kappa_{r}^{\xi}\wedge \tau_{a}^{+}\right)
\mathrm{d}t
\nonumber\\
\hspace{-0.3cm}&=&\hspace{-0.3cm}
\mathbb{E}_{x}\left(\int_{0}^{\kappa_{r}^{\xi}\wedge \tau_{a}^{+}}\mathrm{e}^{-q \left(t-r\right)}
f(X(t),\bar{X}(t))\,
\mathrm{d}\left(\int_{0}^{t}\mathbf{1}_{\{X(s)=\bar{X}(s)\}}\mathrm{d}s\right)\right)
\nonumber\\
\hspace{-0.3cm}&&\hspace{-0.3cm}
+
\frac{1}{q}\,\mathbb{E}_{x}\left(\mathrm{e}^{qr}f(X(e_{q}),\bar{X}(e_{q}))\,\mathbf{1}_{\{X(e_{q})<\bar{X}(e_{q}), \,e_{q}<\kappa_{r}^{\xi}\wedge \tau_{a}^{+}\}}
\right)
\nonumber\\
\hspace{-0.3cm}&:=&\hspace{-0.3cm}
I_{1}(x)+I_{2}(x).
\end{eqnarray}
Note that $\int_{0}^{t}\mathbf{1}_{\{X(s)=\bar{X}(s)\}}\mathrm{d}s=W^{(q)}(0) \,\bar{X}(t)$ and that  $X(t)=\bar{X}(t)$ implies $t=L^{-1}(L(t))$ a.s. The function $I_{1}(x)$ can be rewritten as
\begin{eqnarray}\label{I1}
\hspace{-0.3cm}&&\hspace{-0.3cm}
W^{(q)}(0)\,\mathbb{E}_{x}\left(\int_{0}^{\infty}\mathrm{e}^{-q \left(L^{-1}(L(t))-r\right)}
f(x+L(t),x+L(t))
\mathbf{1}_{\{x+L(t)\leq a,\,L^{-1}(L(t))<\kappa_{r}^{\xi}\}}
\mathrm{d}L(t)\right)
\nonumber\\
\hspace{-0.3cm}&=&\hspace{-0.3cm}
W^{(q)}(0)\,\mathbb{E}_{x}\left(\int_{0}^{a-x}\mathrm{e}^{-q \left(L^{-1}(w)-r\right)}
f(x+w,x+w)
\mathbf{1}_{\{L^{-1}(w)<\kappa_{r}^{\xi}\}}
\mathrm{d}w\right)
\nonumber\\
\hspace{-0.3cm}&=&\hspace{-0.3cm}
W^{(q)}(0)\,\mathrm{e}^{q r}\int_{0}^{a-x}\mathbb{E}_{x}\left(
\mathrm{e}^{-q L^{-1}(w)}\mathbf{1}_{\{L^{-1}(w)<\kappa_{r}^{\xi}\}}\right)
f(x+w,x+w)
\mathrm{d}w
\nonumber\\
\hspace{-0.3cm}&=&\hspace{-0.3cm}
W^{(q)}(0)\,\mathrm{e}^{q r}\int_{x}^{a}
\exp\left(-\int_{x}^{w} \frac{\ell^{(q)\prime}_{r}(\overline{\xi}(z))}{\ell_{r}^{(q)}(\overline{\xi}(z))}\,\mathrm{d}z\right)\,
f(\overline{\xi}(w)+\xi(w),\overline{\xi}(w)+\xi(w))
\mathrm{d}w,
\end{eqnarray}
where \eqref{two.sid.ex.} is used in the last equality.
Using the compensation formula, the function $I_{2}(x)$ can be rewritten as
\begin{eqnarray}\label{I2}
\hspace{-0.3cm}&&\hspace{-0.3cm}
\frac{1}{q}\mathbb{E}_{x}\left(\sum_{g}\mathrm{e}^{q r}
\prod\limits_{h<g}\mathbf{1}_{\{\alpha_{\overline{\xi}(x+L(h))}^{+}(\varepsilon_{h})= \,\zeta_{h},\,x+L(g)\leq a\}}
\right.
\nonumber\\
\hspace{-0.3cm}&&\hspace{0.5cm}
\left.
\times f(x+L(g)-\varepsilon_{g}\left(e_{q}-g\right),x+L(g))
\mathbf{1}_{\{\alpha_{\overline{\xi}(x+L(g))}^{+}(\varepsilon_{g})>e_{q}-g-r,\,0<e_{q}-g<\zeta_{g}\}}\right)
\nonumber\\
\hspace{-0.3cm}&=&\hspace{-0.3cm}
\frac{1}{q}\mathbb{E}_{x}\left(\sum_{g}\mathrm{e}^{-q (g-r)}
\prod\limits_{h<g}\mathbf{1}_{\{\alpha_{\overline{\xi}(x+L(h))}^{+}(\varepsilon_{h})= \,\zeta_{h},\,x+L(g)\leq a\}}
\right.
\nonumber\\
\hspace{-0.3cm}&&\hspace{0.5cm}
\left.
\times f(x+L(g)-\varepsilon_{g}(e_{q}),x+L(g))
\mathbf{1}_{\{\alpha_{\overline{\xi}(x+L(g))}^{+}(\varepsilon_{g})>e_{q}-r,\,e_{q}<\zeta_{g}\}}\right)
\nonumber\\
\hspace{-0.3cm}&=&\hspace{-0.3cm}
\frac{1}{q}\mathbb{E}_{x}\left(\int_{0}^{\infty}\mathrm{e}^{-q (w-r)}\prod\limits_{h<w}\mathbf{1}_{\{\alpha_{\overline{\xi}(x+L(h))}^{+}(\varepsilon_{h})= \,\zeta_{h},\,x+L(w)\leq a\}}
\right.
\nonumber\\
\hspace{-0.3cm}&&\hspace{0.5cm}
\left.
\times \int_{\mathcal{E}}f(x+L(w)-\varepsilon\left(e_{q}\right),x+L(w))
\mathbf{1}_{\{\alpha_{\overline{\xi}(x+L(w))}^{+}(\varepsilon)> e_{q}-r,\,e_{q}<\zeta\}}n(\,\mathrm{d}\varepsilon)\,\mathrm{d}L(w)\right)
\nonumber\\
\hspace{-0.3cm}&=&\hspace{-0.3cm}
\frac{1}{q}\mathbb{E}_{x}\left(\int_{x}^{a}\mathrm{e}^{-q (\tau_{w}^{+}-r)}\mathbf{1}_{\{\tau_{w}^{+}<\kappa_{r}^{\xi}\}}
\int_{\mathcal{E}}f(w-\varepsilon\left(e_{q}\right),w)
\mathbf{1}_{\{\alpha_{\overline{\xi}(w)}^{+}(\varepsilon)> e_{q}-r,\,e_{q}<\zeta\}}n(\,\mathrm{d}\varepsilon)\,\mathrm{d}w\right)
\nonumber\\
\hspace{-0.3cm}&=&\hspace{-0.3cm}
\int_{x}^{a}
\exp\left(-\int_{x}^{w} \frac{\ell^{(q)\prime}_{r}(\overline{\xi}(z))}{\ell_{r}^{(q)}(\overline{\xi}(z))}\,\mathrm{d}z\right)
\nonumber\\
\hspace{-0.3cm}&&\hspace{0.5cm}
\times n\left(\int_{0}^{\zeta}\mathrm{e}^{-q (t-r)}
f(\overline{\xi}(w)-\varepsilon\left(t\right)+\xi(w),\overline{\xi}(w)+\xi(w))
\mathbf{1}_{\{\alpha_{\overline{\xi}(w)}^{+}(\varepsilon)> t-r\}}\mathrm{d}t\right)\mathrm{d}w.
\end{eqnarray}
Combining \eqref{h1h2.1}, \eqref{I1}, \eqref{I2} and \eqref{n.3''} leads to the desired result.
\end{proof}

\vspace{0.3cm}
We have the following version of Theorem \ref{thm3} when $f$ is independent of $y$.

\begin{cor}\label{cor3.1}
For any $q,\lambda\geq0$, $r>0$, $a\geq x$ and bounded differentiable function $f$, we have
\begin{eqnarray}\label{pot.mea..1}
\hspace{-0.3cm}&&\hspace{-0.3cm}
\int_{0}^{\infty}\mathrm{e}^{-q \left(t-r\right)}
\mathbb{E}_{x}\left(f(X(t)); \,t<\kappa_{r}^{\xi}\wedge \tau_{a}^{+}\right)
\mathrm{d}t
\nonumber\\
\hspace{-0.3cm}&=&\hspace{-0.3cm}
\int_{x}^{a}
\exp\left(-\int_{x}^{w} \frac{\ell^{(q)\prime}_{r}(\overline{\xi}(z))}{\ell_{r}^{(q)}(\overline{\xi}(z))}\,\mathrm{d}z\right)
\left[\frac{\ell_{r}^{(q)\prime}(\overline{\xi}(w))}{\ell_{r}^{(q)}(\overline{\xi}(w))}
\left(\int_{0}^{r}\mathrm{e}^{q(r-s)}\mathbb{E}_{}\left(f(w+X(s))\right)\mathrm{d}s
\right.\right.
\nonumber\\
\hspace{-0.3cm}&&\hspace{-0.3cm}
\left.-\int_{0}^{\overline{\xi}(w)}W^{(q)}(\overline{\xi}(w)-z)\mathbb{E}_{}\left(f(z+\xi(w)+X(r))\right)\mathrm{d}z
\right.
\nonumber\\
\hspace{-0.3cm}&&\hspace{-0.3cm}
\left.-\int_{0}^{r}\mathbb{E}_{}\left(f(\xi(w)+X(r-s))\right)\ell_{s}^{(q)}(\overline{\xi}(w))\mathrm{d}s\right)
\nonumber\\
\hspace{-0.3cm}&&\hspace{-0.3cm}
-
\int_{0}^{r}\mathrm{e}^{q(r-s)}\mathbb{E}_{}\left(f'(w+X(s))\right)
\mathrm{d}s
+\int_{0}^{\overline{\xi}(w)}W^{(q)\prime}(\overline{\xi}(w)-z)\mathbb{E}_{}\left(f(z+\xi(w)+X(r))\right)\mathrm{d}z
\nonumber\\
\hspace{-0.3cm}&&\hspace{-0.3cm}
\left.\left.+W^{(q)}(0+)\,\mathbb{E}_{}\left(f(w+X(r))\right)
+\int_{0}^{r}\mathbb{E}_{}\left(f(\xi(w)+X(r-s))\right)\ell_{s}^{(q)\prime}(\overline{\xi}(w))\mathrm{d}s\right)\right]
\,\mathrm{d}w.\nonumber
\end{eqnarray}
\end{cor}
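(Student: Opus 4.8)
The plan is to read off Corollary \ref{cor3.1} directly from Theorem \ref{thm3} by specializing the bivariate function to one that does not depend on its second argument. So, given a bounded differentiable function $f$ on $(-\infty,\infty)$, I would introduce the bivariate function $\tilde{f}$ defined by $\tilde{f}(x,y):=f(x)$ for all $x,y$.

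First I would verify that $\tilde{f}$ is an admissible input for Theorem \ref{thm3}: it is bounded since $f$ is bounded, and it is differentiable in its first argument with $\frac{\partial}{\partial x}\tilde{f}(x,y)=f'(x)$ since $f$ is differentiable. Applying \eqref{pot.mea..1'} to $\tilde{f}$ then yields an identity whose left-hand side is $\int_{0}^{\infty}\mathrm{e}^{-q(t-r)}\mathbb{E}_{x}(\tilde{f}(X(t),\bar{X}(t));\,t<\kappa_{r}^{\xi}\wedge\tau_{a}^{+})\mathrm{d}t=\int_{0}^{\infty}\mathrm{e}^{-q(t-r)}\mathbb{E}_{x}(f(X(t));\,t<\kappa_{r}^{\xi}\wedge\tau_{a}^{+})\mathrm{d}t$, the quantity of interest.

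It then remains to simplify the right-hand side. Every term of the form $\mathbb{E}(\tilde{f}(\,\cdot\,,w))$ appearing in \eqref{pot.mea..1'} collapses to $\mathbb{E}(f(\,\cdot\,))$ evaluated at the first coordinate, and the term $\mathbb{E}(\frac{\partial}{\partial x}\tilde{f}(w+X(s),w))$ becomes $\mathbb{E}(f'(w+X(s)))$; after these replacements \eqref{pot.mea..1'} reads, line for line, as \eqref{pot.mea..1}. I do not expect any real obstacle in this argument, as the corollary is a pure specialization; the only point deserving mention is the elementary fact that a bounded differentiable univariate $f$ induces, through $\tilde{f}(x,y):=f(x)$, a bounded bivariate function that is differentiable in $x$, so that Theorem \ref{thm3} is legitimately applicable. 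If one preferred to bypass the bivariate Remark \ref{rem0} altogether, one could alternatively re-run the compensation-formula computation used in the proof of Theorem \ref{thm3}, invoking the univariate Proposition \ref{lemma3}, i.e. \eqref{n.3}, in place of \eqref{n.3''}; this merely retraces the same steps and produces \eqref{pot.mea..1} directly, offering nothing beyond the specialization just described.
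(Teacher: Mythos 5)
Your proposal is correct and coincides with how the paper itself obtains Corollary \ref{cor3.1}: the paper states it as the version of Theorem \ref{thm3} for $f$ independent of the second argument, i.e.\ exactly the specialization $\tilde{f}(x,y):=f(x)$ that you spell out (with the derivative term $\frac{\partial}{\partial x}\tilde{f}=f'$). No gap to report.
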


\vspace{0.3cm}
The following result gives the Laplace transform of the potential measure of $X$ killed upon up-crossing $a\,(\geq x)$ or draw-down Parisian ruin.

\begin{cor}\label{cor3.2}
For any $q,\lambda\geq0$, $r>0$ and $a \geq x$, we have
\begin{eqnarray}\label{Lap.pot.mea.}
\hspace{-0.3cm}&&\hspace{-0.3cm}
\mathbb{E}_{x}\left(\int_{0}^{\kappa_{r}^{\xi}\wedge \tau_{a}^{+}}\mathrm{e}^{-q \left(t-r\right)}\mathrm{e}^{\lambda X(t)-\psi\left(\lambda\right)r}
\mathrm{d}t
\right)
\nonumber\\
\hspace{-0.3cm}&=&\hspace{-0.3cm}
\int_{x}^{a}\mathrm{e}^{\lambda \xi(w)}
\exp\left(-\int_{x}^{w} \frac{\ell^{(q)\prime}_{r}(\overline{\xi}(z))}{\ell_{r}^{(q)}(\overline{\xi}(z))}\,\mathrm{d}z\right)
\left[\frac{\ell_{r}^{(q)\prime}(\overline{\xi}(w))}{\ell_{r}^{(q)}(\overline{\xi}(w))}\right.
\nonumber\\
\hspace{-0.3cm}&&\hspace{-0.3cm}
\left.
\times
\left(\frac{\mathrm{e}^{\lambda \overline{\xi}(w)}\left(1-\mathrm{e}^{-(\psi(\lambda)-q)r}\right)}{\psi(\lambda)-q}-
\mathrm{e}^{\lambda \overline{\xi}(w)}\int_{0}^{\overline{\xi}(w)}W^{(q)}(z)\mathrm{e}^{-\lambda z}\mathrm{d}z-\int_{0}^{r}
\mathrm{e}^{-\psi(\lambda) s}\,\ell_{s}^{(q)}(\overline{\xi}(w))\mathrm{d}s\right)
\right.
\nonumber\\
\hspace{-0.3cm}&&\hspace{-0.3cm}
\left.
-
\frac{\lambda \mathrm{e}^{\lambda \overline{\xi}(w)}\left(1-\mathrm{e}^{-(\psi(\lambda)-q)r}\right)}{\psi(\lambda)-q}+
\lambda \mathrm{e}^{\lambda \overline{\xi}(w)}\int_{0}^{\overline{\xi}(w)}W^{(q)}(z)\mathrm{e}^{-\lambda z}\mathrm{d}z
\right.
\nonumber\\
\hspace{-0.3cm}&&\hspace{-0.3cm}
\left.
+W^{(q)}(\overline{\xi}(w))+\int_{0}^{r}
\mathrm{e}^{-\psi(\lambda) s}\,\ell_{s}^{(q)\prime}(\overline{\xi}(w))\mathrm{d}s
\right]\,\mathrm{d}w
.\nonumber
\end{eqnarray}
\end{cor}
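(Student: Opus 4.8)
The plan is to obtain the asserted identity by specializing Corollary~\ref{cor3.1} to the function $f(x)=\mathrm{e}^{\lambda x-\psi(\lambda)r}$ (with $r$ regarded as a fixed parameter) and simplifying. First I would note that, since the integrand is nonnegative, Tonelli's theorem gives
\[
\mathbb{E}_{x}\left(\int_{0}^{\kappa_{r}^{\xi}\wedge\tau_{a}^{+}}\mathrm{e}^{-q(t-r)}\mathrm{e}^{\lambda X(t)-\psi(\lambda)r}\,\mathrm{d}t\right)=\int_{0}^{\infty}\mathrm{e}^{-q(t-r)}\mathbb{E}_{x}\left(f(X(t));\,t<\kappa_{r}^{\xi}\wedge\tau_{a}^{+}\right)\mathrm{d}t,
\]
so the left-hand side above coincides with the left-hand side of Corollary~\ref{cor3.1} for this choice of $f$.

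Next I would evaluate the expectations on the right-hand side of Corollary~\ref{cor3.1} using $\mathbb{E}(\mathrm{e}^{\lambda X(t)})=\mathrm{e}^{\psi(\lambda)t}$, which is immediate from the definition of the Laplace exponent (take $x=0$). This yields $\mathbb{E}(f(w+X(s)))=\mathrm{e}^{\lambda w}\mathrm{e}^{-\psi(\lambda)(r-s)}$, $\mathbb{E}(f'(w+X(s)))=\lambda\mathrm{e}^{\lambda w}\mathrm{e}^{-\psi(\lambda)(r-s)}$, $\mathbb{E}(f(z+\xi(w)+X(r)))=\mathrm{e}^{\lambda(z+\xi(w))}$, $\mathbb{E}(f(\xi(w)+X(r-s)))=\mathrm{e}^{\lambda\xi(w)}\mathrm{e}^{-\psi(\lambda)s}$ and $\mathbb{E}(f(w+X(r)))=\mathrm{e}^{\lambda w}$. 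In particular $\int_{0}^{r}\mathrm{e}^{q(r-s)}\mathbb{E}(f(w+X(s)))\,\mathrm{d}s=\mathrm{e}^{\lambda w}\int_{0}^{r}\mathrm{e}^{-(\psi(\lambda)-q)u}\,\mathrm{d}u=\mathrm{e}^{\lambda w}\,\frac{1-\mathrm{e}^{-(\psi(\lambda)-q)r}}{\psi(\lambda)-q}$, with the usual convention that the ratio is read as $r\,\mathrm{e}^{\lambda w}$ when $\psi(\lambda)=q$, and similarly for the term containing $f'$.

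Finally I would perform the change of variable $z\mapsto\overline{\xi}(w)-z$ in the two integrals against $W^{(q)}(\overline{\xi}(w)-z)$ and $W^{(q)\prime}(\overline{\xi}(w)-z)$; using $\mathbb{E}(f(z+\xi(w)+X(r)))=\mathrm{e}^{\lambda(z+\xi(w))}$ together with $w=\xi(w)+\overline{\xi}(w)$, these become $\mathrm{e}^{\lambda w}\int_{0}^{\overline{\xi}(w)}W^{(q)}(z)\mathrm{e}^{-\lambda z}\,\mathrm{d}z$ and $\mathrm{e}^{\lambda w}\int_{0}^{\overline{\xi}(w)}W^{(q)\prime}(z)\mathrm{e}^{-\lambda z}\,\mathrm{d}z$ respectively. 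A single integration by parts, $\int_{0}^{\overline{\xi}(w)}W^{(q)\prime}(z)\mathrm{e}^{-\lambda z}\,\mathrm{d}z=\mathrm{e}^{-\lambda\overline{\xi}(w)}W^{(q)}(\overline{\xi}(w))-W^{(q)}(0+)+\lambda\int_{0}^{\overline{\xi}(w)}W^{(q)}(z)\mathrm{e}^{-\lambda z}\,\mathrm{d}z$, then lets the $W^{(q)\prime}$-term combine with $W^{(q)}(0+)\,\mathbb{E}(f(w+X(r)))$ to give exactly $\mathrm{e}^{\lambda\xi(w)}\big(W^{(q)}(\overline{\xi}(w))+\lambda\mathrm{e}^{\lambda\overline{\xi}(w)}\int_{0}^{\overline{\xi}(w)}W^{(q)}(z)\mathrm{e}^{-\lambda z}\,\mathrm{d}z\big)$. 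Pulling the common factor $\mathrm{e}^{\lambda\xi(w)}$ out of the bracket and collecting the remaining terms converts the right-hand side of Corollary~\ref{cor3.1} into the asserted expression. I do not expect a genuine obstacle here: the computation is routine, and the only points calling for attention are the integration by parts that absorbs the $W^{(q)\prime}$ and $W^{(q)}(0+)$ contributions and the $\psi(\lambda)=q$ limiting convention. As an alternative one could bypass Corollary~\ref{cor3.1} and rerun the compensation-formula computation of Theorem~\ref{thm3} directly, plugging in the excursion quantity furnished by Remark~\ref{rem1}.
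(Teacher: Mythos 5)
Your proposal is correct and takes essentially the same route as the paper, whose proof consists precisely of substituting $f(x)=\mathrm{e}^{\lambda x-\psi(\lambda)r}$ into Theorem \ref{thm3} (equivalently, its single-variable version Corollary \ref{cor3.1}), with the compensation formula plus Remark \ref{rem1} mentioned as the alternative you also note. Your explicit evaluation of the expectations via $\mathbb{E}(\mathrm{e}^{\lambda X(t)})=\mathrm{e}^{\psi(\lambda)t}$, the change of variable, and the integration by parts that absorbs the $W^{(q)\prime}$ and $W^{(q)}(0+)$ terms simply carries out the simplification the paper leaves to the reader, and it checks out.
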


\begin{proof}
Letting $f(x):=\mathrm{e}^{\lambda x-\psi\left(\lambda\right)r}$ in Theorem \ref{thm3}, or using the compensation formula together with Remark \ref{rem1}, one can get the desired result.
\end{proof}

\vspace{0.3cm}
Recall the definition of $V_k^{\xi}\left( x;b \right)$ at the end of Section 2.
The following result generalizes (20) in Czarna and Palmowski (2014), and Propositions 1 and 2 in Renaud and Zhou (2007).
%gives the solution for the $k$-th moment of the accumulated discounted dividend payout until the draw-down Parisian ruin time %$\kappa^{\xi}_{r}$.

\begin{thm}\label{thm3.4}
For any $q\geq0$ and $k\geq1$, we have
 \begin{eqnarray} \label{momentsofD1}
&&{V_k^{\xi}}\left( x;b \right)
= \int_{b}^{\infty}kV_{k-1}(z)\exp \left( { - \int_x^z {\frac{{{\ell_{r}^{\left( kq \right)\prime}} \left( {\bar \xi \left( w \right)} \right)}}{{{\ell_{r}^{\left( {kq } \right)}}\left( {\bar \xi \left( w \right)} \right)}}} \mathrm{d}w} \right)\mathrm{d}z,\quad x \in \left( {-\infty,b} \right],
\nonumber
%&&{V_k^{\xi}}\left( x;b \right)
%= \sum\limits_{i=0}^{k}C_{k}^{i}(x-b)^{i}\int_{x}^{\infty}(k-i)V_{k-i-1}(z)\exp \left( { - \int_x^z {\frac{{{\ell_{r}^{\left( (k-i)q \right)\prime}} \left( {\bar \xi \left( w \right)} \right)}}{{{\ell_{r}^{\left( {(k-i)q } \right)}}\left( {\bar \xi \left( w \right)} \right)}}} \mathrm{d}w} \right)\mathrm{d}z,\quad x \in \left( b,\infty \right),\nonumber
\end{eqnarray}
where
\begin{align}
V_k\left( x \right) = \int_{x}^{\infty}kV_{k-1}(z)\exp \left( { - \int_x^z {\frac{{{\ell_{r}^{\left( kq \right)\prime}} \left( {\bar \xi \left( w \right)} \right)}}{{{\ell_{r}^{\left( {kq } \right)}}\left( {\bar \xi \left( w \right)} \right)}}} \mathrm{d}w} \right)\mathrm{d}z,\quad x\in(-\infty,\infty),\nonumber
\end{align}
with $V_0\left( x \right)\equiv1$.
\end{thm}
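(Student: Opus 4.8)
The plan is to prove both displayed identities by induction on $k$, starting from the trivial case $k=0$ where $V_0^\xi(x;b)=\mathbb E_x\big([D_b]^0\big)=1$. The two ingredients I will use repeatedly are: (a) the process $D(t)=(\bar X(t)-b)\vee0$ is continuous, nondecreasing, and increases only at times $t$ with $X(t)=\bar X(t)>b$, together with the fact that $\bar X(t)-x$ is exactly the local time $L$ of $\bar X-X$ at $0$, so that $\mathrm dD(t)=\mathbf 1_{\{\bar X(t)>b\}}\,\mathrm dL(t)$; and (b) the two-sided exit identity \eqref{newv.tw.si.} read with $q$ replaced by $kq$, i.e. $\mathbb E_x\big(\mathrm e^{-kq\tau_z^+}\mathbf 1_{\{\tau_z^+<\kappa_r^\xi\}}\big)=\exp\big(-\int_x^z \ell_r^{(kq)\prime}(\overline\xi(w))/\ell_r^{(kq)}(\overline\xi(w))\,\mathrm dw\big)$.

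First I would reduce the statement with a general barrier $b$ to the case $b=x$. When $x\leq b$ no dividends accrue before the first passage time $\tau_b^+$, and on $\{\tau_b^+<\kappa_r^\xi\}$ one has $X(\tau_b^+)=\bar X(\tau_b^+)=b$, so that after the shift by $\tau_b^+$ the running supremum and the draw-down Parisian clock both restart; hence $D_b=\mathrm e^{-q\tau_b^+}\widetilde D$ with $\widetilde D$ distributed as $D_b$ under $\mathbb P_b$ and independent of $\mathcal F_{\tau_b^+}$, while $D_b=0$ on $\{\tau_b^+\geq\kappa_r^\xi\}$. Since $\{\tau_b^+<\kappa_r^\xi\}\in\mathcal F_{\tau_b^+}$, the strong Markov property and \eqref{newv.tw.si.} give $V_k^\xi(x;b)=\mathbb E_x\big(\mathrm e^{-kq\tau_b^+}\mathbf 1_{\{\tau_b^+<\kappa_r^\xi\}}\big)V_k(b)=\exp\big(-\int_x^b \ell_r^{(kq)\prime}(\overline\xi(w))/\ell_r^{(kq)}(\overline\xi(w))\,\mathrm dw\big)V_k(b)$; using $\int_x^b+\int_b^z=\int_x^z$ this reduces everything to the formula for $V_k(x)=V_k^\xi(x;x)$.

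For $b=x$ I would write $D_x=\int_{0-}^{\kappa_r^\xi}\mathrm e^{-qt}\,\mathrm dL(t)$ and, for $t\geq0$, $D_x^{(t)}=\int_t^{\kappa_r^\xi}\mathrm e^{-qs}\,\mathrm dL(s)$, so that $D_x^{(0)}=D_x$, $D_x^{(\kappa_r^\xi)}=0$, and — since $L$ is continuous — $t\mapsto D_x^{(t)}$ is continuous of finite variation with $\mathrm d\big[(D_x^{(t)})^k\big]=-k(D_x^{(t)})^{k-1}\mathrm e^{-qt}\,\mathrm dL(t)$. Integrating over $[0,\kappa_r^\xi]$ yields the key identity $[D_x]^k=k\int_{0-}^{\kappa_r^\xi}(D_x^{(t)})^{k-1}\mathrm e^{-qt}\,\mathrm dL(t)$. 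Taking $\mathbb E_x$, I would insert the conditional expectation under the $\mathrm dL$-integral: on the support of $\mathrm dL$ one has $X(t)=\bar X(t)$, so by the strong Markov property the post-$t$ path is a fresh copy started from $\bar X(t)$ whose running supremum restarts there and for which the Parisian clock restarts, whence $\mathbb E_x\big((D_x^{(t)})^{k-1}\mid\mathcal F_t\big)=\mathrm e^{-(k-1)qt}V_{k-1}(\bar X(t))$ on $\{t<\kappa_r^\xi\}$, and therefore $V_k(x)=k\,\mathbb E_x\int_{0-}^{\kappa_r^\xi}\mathrm e^{-kqt}V_{k-1}(\bar X(t))\,\mathrm dL(t)$. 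Then I would change variables $z=x+L(t)=\bar X(t)$ — the time carrying local-time level $z$ being $\tau_z^+$, the measure $\mathrm dL(t)$ becoming $\mathrm dz$, and $\{t<\kappa_r^\xi\}$ becoming $\{\tau_z^+<\kappa_r^\xi\}$ up to a $\mathrm dz$-null set — and use Fubini together with \eqref{newv.tw.si.} (with $q\mapsto kq$) to arrive at $V_k(x)=\int_x^\infty kV_{k-1}(z)\exp\big(-\int_x^z \ell_r^{(kq)\prime}(\overline\xi(w))/\ell_r^{(kq)}(\overline\xi(w))\,\mathrm dw\big)\,\mathrm dz$, which is the asserted formula for $V_k$; substituting this into the reduction of the previous paragraph gives the formula for $V_k^\xi(x;b)$, and the base case $V_0\equiv1$ completes the induction.

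The step I expect to be the main obstacle is the interchange carried out in the third paragraph, namely the identity $\mathbb E_x\int_{0-}^{\kappa_r^\xi}H(t)\,\mathrm dL(t)=\mathbb E_x\int_{0-}^{\kappa_r^\xi}\mathbb E_x(H(t)\mid\mathcal F_t)\,\mathrm dL(t)$ for the non-adapted integrand $H(t)=(D_x^{(t)})^{k-1}\mathrm e^{-qt}$, since $\mathrm dL$ is supported on a random set of times that is Lebesgue-null in the unbounded-variation case. I would justify it exactly as in the excursion-theoretic compensation arguments used earlier (compare the proofs of Proposition \ref{lemma3} and Theorem \ref{thm2}): pass to the inverse local time, decompose the integral into the contributions of the excursion intervals indexed by their left endpoints, and apply the compensation formula for the Poisson point process $(t,\varepsilon_t)$; at each left endpoint $g$ the process sits at $x+L(g)=\bar X(g)$, which is precisely what produces the factor $V_{k-1}(\bar X(t))$ after compensation. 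A minor additional bookkeeping point — tracking the value $\bar X(\kappa_r^\xi)$ in the change of variables — affects only a $\mathrm dz$-null set and does not alter the final expression.
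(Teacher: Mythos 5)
Your proposal is correct, but it proves the theorem by a genuinely different route than the paper. The paper's proof is an infinitesimal/ODE argument in the spirit of Renaud and Zhou (2007): it first establishes the two $o(\epsilon)$ estimates \eqref{expression1oflemma2}--\eqref{expression2oflemma2}, expands $\mathbb{E}_b\big([D_b]^k\mathbf{1}_{\{\tau_{b+\epsilon}^+<\kappa_r^\xi\}}\big)$ by the strong Markov property and the binomial theorem, keeps only the $j=i\in\{0,1\}$ terms, lets $\epsilon\to0$ to obtain the linear ODE $0=V_k'(b)+kV_{k-1}(b)-V_k(b)\,\ell_r^{(kq)\prime}(\bar\xi(b))/\ell_r^{(kq)}(\bar\xi(b))$, and solves it by variation of constants with the boundary condition $V_k(\infty)=0$; the passage from $V_k(b)$ to $V_k^\xi(x;b)$ via strong Markov at $\tau_b^+$ and \eqref{newv.tw.si.} with $q\mapsto kq$ is the same in both arguments. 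You instead obtain $V_k$ directly from the pathwise identity $[D_x]^k=k\int_0^{\kappa_r^\xi}(D_x^{(t)})^{k-1}\mathrm e^{-qt}\,\mathrm dL(t)$, project the non-adapted integrand, and integrate against the two-sided exit identity; this is exactly the device the paper itself uses only for $k=1$ in Remark \ref{cor3.6}, and your induction upgrades it to all $k$. What your route buys: no $\epsilon$-estimates, no implicit differentiability of $V_k$, no boundary condition for an ODE, and all interchanges can be done with nonnegative integrands (Tonelli). What it costs is the projection step you flagged; note that the compensation formula for the excursion point process is not quite the right tool there, since $\mathrm dL$ is carried by the complement of the excursion intervals — the clean justification is the one you also mention: time-change by the inverse local time, so the relevant times become the stopping times $L^{-1}(u)=\tau_{x+u}^+$, apply the strong Markov property at these (on $\{\tau_{x+u}^+<\kappa_r^\xi\}$ the supremum and the Parisian clock both restart), and use Fubini; this is the same manipulation the paper performs for the $h_1$ and $I_1$ terms in Proposition \ref{lemma3} and Theorem \ref{thm3}, or one can invoke the optional projection theorem together with the section theorem. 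With that step made explicit, your argument is a complete and somewhat more transparent proof of Theorem \ref{thm3.4}.
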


\begin{proof}
For $\epsilon>0$ and integer $n\geq1$, we have
\begin{equation} \label{expression1oflemma2}
{\mathbb{E}_b}\left( {{{\left( {\int_0^{\tau _{b +  \epsilon }^ + } {{\mathrm{e}^{ - q s}}D\left( s \right)\mathrm{d}s} } \right)}^n}{{\mathbf{1}}_{\{ {\tau _{b +  \epsilon }^ +  < {\kappa^{\xi}_{r} }} \}}}} \right) = o\left( { \epsilon } \right),
\end{equation}
and
  \begin{equation} \label{expression2oflemma2}
{\mathbb{E}_b}\left( {{{\left( {\int_0^{{\kappa^{\xi}_{r} }} {{\mathrm{e}^{ - q s}}\mathrm{d}D\left( s \right)} } \right)}^n}{{\mathbf{1}}_{\{ {{\kappa^{\xi}_{r} } < \tau _{b +\epsilon }^ + } \}}}} \right) = o\left(\epsilon \right).
\end{equation}
Actually, $X\left( {\tau _{b + \epsilon }^ + } \right) = b + \epsilon $ implies $ D(s)\leq  \epsilon$ for all $s\in[0,{\tau _{b + \epsilon }^ + }]$. Hence, the left hand side of \eqref{expression1oflemma2} is less than
\begin{align}
 &\ { \epsilon^n }{\mathbb{E}_b}\left[ {{{\left( {\int_0^{\tau _{b +  \epsilon }^ + } {{\mathrm{e}^{ - q s}}\mathrm{d}s} } \right)}^n}{{\mathbf{1}}_{\left\{ {\tau _{b +  \epsilon }^ +  < {\kappa^{\xi}_{r} }} \right\}}}} \right]
 \nonumber \\
\leq &
\frac{\epsilon^{n}}{q^{n}}\left( {{\mathbb{E}_b}\left[ {{{\mathbf{1}}_{\left\{ {\tau _{b + \epsilon}^ +  < {\kappa^{\xi}_{r} }} \right\}}}} \right] - {\mathbb{E}_b}\left[ {{\mathrm{e}^{ - q \tau _{b + \epsilon}^ + }}{{\mathbf{1}}_{\left\{ {\tau _{b + \epsilon}^ +  < {\kappa^{\xi}_{r} }} \right\}}}} \right]} \right)
 \nonumber \\
=& \frac{\epsilon^{n}}{q^{n} }\left( {\exp \left( { - \int_b^{b +  \epsilon } {\frac{
{\ell_{r}^\prime \left( {\bar \xi \left( w \right)} \right)}}{{\ell_{r}\left( {\bar \xi \left( w \right)} \right)}}} \mathrm{d}w} \right) - \exp \left( { - \int_b^{b +  \epsilon } {\frac{{{\ell_{r}^{\left( q  \right)\prime}} \left( {\bar \xi \left( w \right)} \right)}}{{{\ell_{r}^{\left( q  \right)}}\left( {\bar \xi \left( w \right)} \right)}}} \mathrm{d}w} \right)} \right) = o\left( { \epsilon } \right).\nonumber
\end{align}
which gives (\ref{expression1oflemma2}). By integration by parts, the left hand side of \eqref{expression2oflemma2} can be rewritten as
\begin{align}
&\ {\mathbb{E}_b}\left[ {{{\left( {{\mathrm{e}^{ - q {\kappa^{\xi}_{r} }}}D( {{\kappa^{\xi}_{r} }} ) + q \int_0^{{\kappa^{\xi}_{r} }} {{\mathrm{e}^{ - q s}}D\left( s \right)\mathrm{d}s} } \right)}^{n}}{\mathbf{1}_{\{ {{\kappa^{\xi}_{r} } < \tau _{b + \epsilon }^ + } \}}}} \right]\
 \nonumber \\
  \le &\ {\mathbb{E}_b}\left[ {{{\left( {{\epsilon \mathrm{e}^{ - q {\kappa^{\xi}_{r} }}}  + \epsilon \int_0^{{\kappa^{\xi}_{r} }} { {q\mathrm{e}^{ - q s}} \mathrm{d}s} } \right)}^{n}}{\mathbf{1}_{\{ {{\kappa^{\xi}_{r} } < \tau _{b +  \epsilon }^ + } \}}}} \right]\
\nonumber \\
  =&\ {{\epsilon ^{n}}\left( {1 - \exp \left( { - \int_b^{b + \epsilon} {\frac{{\ell_{r}^\prime \left( {\bar \xi \left( w \right)} \right)}}{{\ell_{r}\left( {\bar \xi \left( w \right)} \right)}}} \mathrm{d}w} \right)} \right)}
 = o\left( { \epsilon } \right),\nonumber
\end{align}
which gives \eqref{expression2oflemma2}.
By \eqref{expression2oflemma2}, one has
\begin{align*}
{V_k}\left( b \right) = {\mathbb{E}_b}\left( {\left[D_{b}\right]^{k}{\mathbf{1}_{\{ {{\tau _{b +\epsilon }^{+}<\kappa^{\xi}_{r} } } \}}}} \right)   + o\left( { \epsilon } \right)
.
\end{align*}
Using the strong Markov property
%  at time $\tau _{b +  \epsilon }^ + $
and the Binomial Theorem,
one can rewrite ${\mathbb{E}_b}\left( {\left[D_{b}\right]^{k}{\mathbf{1}_{\{ \tau _{b +\epsilon }^{+}<\kappa^{\xi}_{r}\}}}} \right)$ as
\begin{align}
&\ {\mathbb{E}_b}\left[  {{{\sum\limits_{i = 0}^k {C_k^i\left( {\int_0^{\tau _{b +  \epsilon }^ + } {{\mathrm{e}^{ - q s}}\mathrm{d}D\left( s \right)} } \right)} }^i}{{\left( {\int_{\tau _{b +  \epsilon }^ + }^{{\kappa^{\xi}_{r} }} {{\mathrm{e}^{ - q s}}\mathrm{d}D\left( s \right)} } \right)}^{k - i}}{{\mathbf{1}}_{\{ {\tau _{b +  \epsilon }^ +  < {\kappa^{\xi}_{r} }} \}}}} \right]
 \nonumber \\
 =&\ {\mathbb{E}_b}\left[ {{{\sum\limits_{i = 0}^k {C_k^i\left( {\int_0^{\tau _{b +  \epsilon }^ + } {{\mathrm{e}^{ - q s}}\mathrm{d}D\left( s \right)} } \right)} }^i}{{\mathbf{1}}_{\left\{ {\tau _{b +  \epsilon }^ +  < {\kappa^{\xi}_{r} }} \right\}}}{{\left( {{\mathrm{e}^{ - q \tau _{b +  \epsilon }^ + }}} \right)}^{k - i}}{V_{k - i}}( b+\epsilon )} \right]
  \nonumber \\
 =&\ \sum\limits_{i = 0}^k {C_k^i} V_{k - i}( b+\epsilon )\mathbb{E}_b\left[ \left( \int_0^{\tau _{b +  \epsilon }^ + } \mathrm{e}^{ - q s}\mathrm{d}D\left( s \right) \right)^i \mathrm{e}^{ - (k - i)q \tau _{b +  \epsilon }^ +}  \mathbf{1}_{\{ {\tau _{b +  \epsilon }^ +  < {\kappa^{\xi}_{r} }} \}} \right]
  \nonumber \\
 =&\ \sum\limits_{i = 0}^k {C_k^i} V_{k - i}( b+\epsilon ){\mathbb{E}_b}\left[ {{\mathrm{e}^{ - (k - i)q \tau _{b +  \epsilon }^ +}}\sum\limits_{j = 0}^i {C_i^j
 \epsilon^{j} {\mathrm{e}^{ -j q \tau _{b +  \epsilon }^ + }}{{\left( {q \int_0^{\tau _{b +  \epsilon }^ + } {{\mathrm{e}^{ - q s}}D\left( s \right)\mathrm{d}s} } \right)}^{i - j}}} {{\mathbf{1}}_{\{ {\tau _{b + \epsilon }^ +  < {\kappa^{\xi}_{r} }} \}}}} \right]
 \nonumber \\
 =&\ \sum\limits_{i = 0}^k {C_k^i} V_{k - i}( b+\epsilon )\sum\limits_{j = 0}^i {C_i^j} {\epsilon ^j}{\mathbb{E}_b}\left[ {{\mathrm{e}^{ - q \left( {k - i + j} \right)\tau _{b + \epsilon}^ + }}{q ^{i - j}}{{\left( {\int_0^{\tau _{b + \epsilon}^ + } {{\mathrm{e}^{ - q s}}D\left( s \right)\mathrm{d}s} } \right)}^{i - j}}{{\mathbf{1}}_{\{ {\tau _{b + \epsilon}^ +  < {\kappa^{\xi}_{r} }} \}}}} \right],
\label{eqn:pass:-:3}
\end{align}
where the following identity
\begin{align}
\mathbb{E}_b\left(\left.\left(\int_{\tau _{b +  \epsilon }^ + }^{{\kappa^{\xi}_{r} }} {{\mathrm{e}^{ - q s}}\mathrm{d}D\left( s \right)}\right)^{i}\right|\mathcal{F}_{\tau _{b +  \epsilon }^ +}\right)
=&\mathbb{E}_b\left(\left.\left(\int_{\tau _{b +  \epsilon }^ + }^{{\kappa^{\xi}_{r} }} {{\mathrm{e}^{ - q s}}
\mathrm{d}\left( (\bar{X}(s)-(b+\epsilon))\vee 0 \right)}\right)^{i}\right|\mathcal{F}_{\tau _{b +  \epsilon }^ +}\right)
\nonumber \\
 =& \mathrm{e}^{ - iq \tau _{b +  \epsilon }^ + }\, V_{i}( b+\epsilon ),\quad i\geq0,\nonumber
\end{align}
is used for the first equation of \eqref{eqn:pass:-:3}.
Due to \eqref{expression1oflemma2} and \eqref{expression2oflemma2}, one can keep only those summands with $j=i=1$ or $j=i=0$ in (\ref{eqn:pass:-:3}), and then
\begin{align}
V_k\left( b \right)=&  {V_k}\left( b+\epsilon \right){\mathbb{E}_b}\left[ {{\mathrm{e}^{ - kq \tau _{b +  \epsilon }^ + }}{{\mathbf{1}}_{\left\{ {\tau _{b +  \epsilon }^ +  < {\kappa^{\xi}_{r} }} \right\}}}} \right] + k{V_{k - 1}}\left( b+\epsilon \right){\mathbb{E}_b}\left[ { \epsilon {\mathrm{e}^{ - kq \tau _{b +  \epsilon }^ + }}{{\mathbf{1}}_{\left\{ {\tau _{b +  \epsilon }^ +  < {\kappa^{\xi}_{r} }} \right\}}}} \right]+ o\left( { \epsilon } \right)
  \nonumber \\
 =& \left( {{V_k}\left( b+\epsilon \right) + k\epsilon{V_{k - 1}}\left( b+\epsilon \right)} \right)
 \exp \left( { - \int_b^{b +  \epsilon } {\frac{{{\ell_{r}^{\left( {kq } \right)\prime}} \left( {\bar \xi \left( w \right)} \right)}}{{{\ell_{r}^{\left( {kq } \right)}}\left( {\bar \xi \left( w \right)} \right)}}\mathrm{d}w} } \right) + o\left( { \epsilon } \right),\nonumber
\end{align}
which can be rearranged as
\begin{align}\label{eqn:pass:-:4}
0=&\left(\frac{V_k\left( b+\epsilon \right)-V_k\left( b \right)}{\epsilon}
+
 k V_{k - 1}\left( b+\epsilon \right)\right)
 \exp \left( { - \int_b^{b +  \epsilon } {\frac{{{\ell_{r}^{\left( {kq } \right)\prime}} \left( {\bar \xi \left( w \right)} \right)}}{{{\ell_{r}^{\left( {kq } \right)}}\left( {\bar \xi \left( w \right)} \right)}}\mathrm{d}w} } \right)
 \nonumber\\
 &+V_k\left( b\right)\frac{-1+\exp \left( { - \int_b^{b +  \epsilon } {\frac{{{\ell_{r}^{\left( {kq } \right)\prime}} \left( {\bar \xi \left( w \right)} \right)}}{{{\ell_{r}^{\left( {kq } \right)}}\left( {\bar \xi \left( w \right)} \right)}}\mathrm{d}w} } \right) }{\epsilon}
 + o( 1 ).
\end{align}
Letting $\epsilon\rightarrow0$ in \eqref{eqn:pass:-:4} we get
\begin{align}\label{diff.eq.}
0=V_k^{\prime}\left( b \right)
+
 k V_{k - 1}\left( b\right)
-V_k\left( b\right)
\frac{{{\ell_{r}^{\left( {kq } \right)\prime}} \left( {\bar \xi \left( b \right)} \right)}}{{{\ell_{r}^{\left( {kq } \right)}}\left( {\bar \xi \left( b \right)} \right)}}.
\end{align}
By the standard method of variation of constant, one can obtain the solution of \eqref{diff.eq.} with boundary condition ${V_k}\left(\infty\right) = 0$ and $V_{0}(x)=1$  as

\begin{align}
V_k\left( b \right) = \int_{b}^{\infty}kV_{k-1}(z)\exp \left( { - \int_b^z {\frac{{{\ell_{r}^{\left( kq \right)\prime}} \left( {\bar \xi \left( w \right)} \right)}}{{{\ell_{r}^{\left( {kq } \right)}}\left( {\bar \xi \left( w \right)} \right)}}} \mathrm{d}w} \right)\mathrm{d}z.\nonumber
\end{align}

For $x \in \left(-\infty, b\right]$ and $k\geq1$, we have
 \begin{align}
V_k^{\xi}\left(x;b \right)
 =&\mathbb{E}_{x}\left[\left(\int_{\tau _{b}^{+}}^{\kappa^{\xi}_{r}}\mathrm{e}^{-q s}\,\mathrm{d}\left(\bar{X}(s)-b\right)\right)^{k}\mathbf{1}_{\{\tau _{b}^{+}<\kappa^{\xi}_{r} \}}\right]
 \nonumber\\
 =&\mathbb{E}_{x}\left(
 \mathrm{e}^{-k q \tau _{b}^{+}}\mathbf{1}_{\{\tau _{b}^{+}<\kappa^{\xi}_{r} \}}\right)V_{k}(b)
  \nonumber\\
 =&\exp \left( { - \int_x^b {\frac{{{\ell_{r}^{\left( kq \right)\prime}} \left( {\bar \xi \left( w \right)} \right)}}{{{\ell_{r}^{\left( {kq } \right)}}\left( {\bar \xi \left( w \right)} \right)}}} \mathrm{d}w} \right)
 \int_{b}^{\infty}kV_{k-1}(z)\exp \left( { - \int_b^z {\frac{{{\ell_{r}^{\left( kq \right)\prime}} \left( {\bar \xi \left( w \right)} \right)}}{{{\ell_{r}^{\left( {kq } \right)}}\left( {\bar \xi \left( w \right)} \right)}}} \mathrm{d}w} \right)\mathrm{d}z.
  \nonumber
   \end{align}
%For $x \in \left(b,\infty \right)$ and $k\geq1$, recalling that $(\bar{X}(t)-b)\vee0=x-b+(\bar{X}(t)-x)\vee0$ under $\mathbb{P}_{x}$, we have
% \begin{align}
%V_k^{\xi}\left(x;b \right)
% =&\mathbb{E}_{x}\left[\left(x-b+\int_{0}^{\kappa^{\xi}_{r}}\mathrm{e}^{-q s}\,\mathrm{d}\left(\bar{X}(s)-x\right)\right)^{k}\right]
% \nonumber\\
% =&\sum\limits_{i=0}^{k}C_{k}^{i}(x-b)^{i}V_{k-i}(x)
%  \nonumber\\
% =&\sum\limits_{i=0}^{k}C_{k}^{i}(x-b)^{i}\int_{x}^{\infty}(k-i)V_{k-i-1}(x)\exp \left( { - \int_x^z {\frac{{{\ell_{r}^{\left( (k-i)q \right)\prime}} \left( {\bar \xi \left( w \right)} \right)}}{{{\ell_{r}^{\left( {(k-i)q } \right)}}\left( {\bar \xi \left( w \right)} \right)}}} \mathrm{d}w} \right)\mathrm{d}z,
%  \nonumber
%   \end{align}
%   with $V_{0}(x)=1$.
The proof is complete.
\end{proof}

\begin{rem}
In the above proof, we borrow a Binomial argument from Renaud and Zhou (2007).
 But our Parisian draw-down time related arguments are more involved because we need to keep track of the running supreme process of $X$. In addition,  a differential equation argument was employed.
\end{rem}

\begin{rem}\label{cor3.6}

For $k=1$ we present an alternative more transparent argument.
Given $a>b $, by \eqref{newv.tw.si.} we have
\begin{eqnarray}\label{expected discounted dividend identity1''}
\hspace{-0.3cm}\hspace{-0.3cm}
V_{1}(b)
\hspace{-0.3cm}&=&\hspace{-0.3cm}\mathbb{E}_{b} \left(\int_{0}^{\infty}\mathbf{1}_{\{L(t)<L(\tau_{a}^{+}\wedge \kappa^{\xi}_{r})\}}\mathrm{e}^{-q t}\,\mathrm{d}L(t)\right)
%\nonumber\\
%\hspace{-0.3cm}&=&\hspace{-0.3cm}
%\mathbb{E}_{b}\int_{0}^{\infty}\mathbf{1}_{\{y<L(\tau_{a}^{+}\wedge \kappa^{\xi}_{r})\}}\mathrm{e}^{-q %L^{-1}(y)}\,\mathrm{d}y
\nonumber\\
%\hspace{-0.3cm}&=&\hspace{-0.3cm}\int_{0}^{\infty}E_{a}\left(\mathrm{e}^{-q L^{-1}(y)}\mathbf{1}_{\{y<L(T_{b}^{+}\wedge \tau_{\xi}^{a})\}}\right)\,\mathrm{d}y
%\nonumber\\
%\hspace{-0.3cm}&=&\hspace{-0.3cm}\int_{0}^{\infty}E_{a}\left(\mathrm{e}^{-q L^{-1}(y)}\mathbf{1}_{\{y<L(T_{b}^{+})\wedge L(\tau_{\xi}^{a})\}}\right)\,\mathrm{d}y
%\nonumber\\
\hspace{-0.3cm}&=&\hspace{-0.3cm}\int_{0}^{\infty}\mathbb{E}_{b}\left(\mathrm{e}^{-q L^{-1}(y)}\mathbf{1}_{\{y<L(\tau_{a}^{+})=a-b, \,y<L(\kappa^{\xi}_{r})\}}\right)\,\mathrm{d}y
\nonumber\\
\hspace{-0.3cm}&=&\hspace{-0.3cm}\int_{0}^{a-b}\mathbb{E}_{b}\left(\mathrm{e}^{-q L^{-1}(y)}\mathbf{1}_{\{L^{-1}(y)<\kappa^{\xi}_{r}\}}\right)\,\mathrm{d}y
\nonumber\\
\hspace{-0.3cm}&=&\hspace{-0.3cm}\int_{b}^{a}\mathbb{E}_{b}\left(\mathrm{e}^{-q \tau_{z}^{+}}\mathbf{1}_{\{\tau_{z}^{+}<\kappa^{\xi}_{r}\}}\right)\,\mathrm{d}z
\nonumber\\
\hspace{-0.3cm}&=&\hspace{-0.3cm}\int_{b}^{a}
\exp\left(-\int_{b}^{z} \frac{\ell^{(q)\prime}_{r}(\overline{\xi}(w))}{\ell_{r}^{(q)}(\overline{\xi}(w))}\,\mathrm{d}w\right)
\,\mathrm{d}z
.
\end{eqnarray}
Letting $a\rightarrow\infty$ in \eqref{expected discounted dividend identity1''}, we obtain the desired result.
%combining \eqref{expected discounted dividend identity1}, \eqref{expected discounted dividend identity1'} we finish the proof.
\end{rem}

\vspace{0.3cm}
\section{Application of the draw-down Parisian ruin results to spectrally negative  L\'{e}vy process reflected at its past supreme}
\setcounter{section}{5}\setcounter{equation}{0}

\vspace{0.3cm}

 Recall the dividend process $D$  defined  in \eqref{Div.pro.}. Let the corresponding risk process with dividends deducted according to the barrier strategy with barrier level $b$, be defined as
$$Y(t):=X(t)-D(t), \quad t\geq0.$$
For fixed $b\in(0,\infty)$, if we choose the general draw-down function $\xi$ such that
$$\xi(z):=\xi_{b}(z)=\left(z- b\right)\vee0,\quad z\in(-\infty,\infty),$$
then we have
$$\kappa_{r}^{\xi}: =\inf\{t>r: t-g_{t}^{\xi}>r\}, \,\,\mbox{ where }\,\, g_{t}^{\xi}: =\sup\{0\leq s \leq t: Y(s)\geq0\},$$
i.e., $\kappa_{r}^{\xi}=\kappa_{r}^{\xi_{b}}$ degenerates to the classical Parisian ruin time for the risk process $Y$.
In addition, by the definition of $\tau_{\xi}$ we have
$$\tau_{\xi_{b}}:=\inf\{t\geq0: Y(t)\leq0\},$$
which is the ruin time for the risk process $Y$.

\vspace{0.3cm}
The following result gives the potential measure of $Y$ upon the up-crossing time of level $a$ or the Parisian ruin time of $Y$.

\begin{cor}
For $b\in(0,\infty)$, $\xi=\xi_{b}$, $q,\lambda\geq0$, $r>0$, $a\geq x$ and bounded differentiable function $f$, we have
\begin{eqnarray}\label{pot.mea..1'''}
\hspace{-0.3cm}&&\hspace{-0.3cm}
\int_{0}^{\infty}\mathrm{e}^{-q \left(t-r\right)}
\mathbb{E}_{x}\left(f\left(Y(t)\right); \,t<\kappa_{r}^{\xi_{b}}\wedge \tau_{a}^{+}\right)
\mathrm{d}t
\nonumber\\
\hspace{-0.3cm}&=&\hspace{-0.3cm}
\int_{x}^{a}
\exp\left(-\int_{x}^{w} \frac{\ell^{(q)\prime}_{r}(z\wedge b)}{\ell_{r}^{(q)}(z\wedge b)}\,\mathrm{d}z\right)
\left[\frac{\ell_{r}^{(q)\prime}(w\wedge b)}{\ell_{r}^{(q)}(w\wedge b)}
\left(\int_{0}^{r}\mathrm{e}^{q(r-s)}\mathbb{E}_{}\left(f(w\wedge b+X(s))\right)\mathrm{d}s
\right.\right.
\nonumber\\
\hspace{-0.3cm}&&\hspace{-0.3cm}
\left.-\int_{0}^{w\wedge b}W^{(q)}(w\wedge b-z)
\mathbb{E}_{}\left(f(z+X(r))\right)\mathrm{d}z
\right.
\nonumber\\
\hspace{-0.3cm}&&\hspace{-0.3cm}
\left.-\int_{0}^{r}\mathbb{E}_{}\left(f(X(r-s))\right)\ell_{s}^{(q)}(w\wedge b)\mathrm{d}s\right)
\nonumber\\
\hspace{-0.3cm}&&\hspace{-0.3cm}
-
\int_{0}^{r}\mathrm{e}^{q(r-s)}\mathbb{E}_{}\left(f^{\prime}(w\wedge b+X(s))\right)
\mathrm{d}s
+\int_{0}^{w\wedge b}W^{(q)\prime}(w\wedge b-z)\mathbb{E}_{}\left(f(z+X(r))\right)\mathrm{d}z
\nonumber\\
\hspace{-0.3cm}&&\hspace{-0.3cm}
\left.\left.+W^{(q)}(0+)\,\mathbb{E}_{}\left(f(w\wedge b+X(r))\right)
+\int_{0}^{r}\mathbb{E}_{}\left(f(X(r-s))\right)\ell_{s}^{(q)\prime}(w\wedge b)\mathrm{d}s\right)\right]
\,\mathrm{d}w.\nonumber
\end{eqnarray}
\end{cor}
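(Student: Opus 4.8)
The plan is to derive this corollary directly from Theorem \ref{thm3} by a suitable choice of the bivariate test function. Fix $b\in(0,\infty)$ and take $\xi=\xi_{b}$, so that $\xi_{b}(w)=(w-b)\vee 0$ and hence $\overline{\xi}(w)=w-\xi_{b}(w)=w\wedge b$. Since $D(t)=(\bar{X}(t)-b)\vee 0$ we have $Y(t)=X(t)-D(t)=X(t)-(\bar{X}(t)-b)\vee 0$, so that $f(Y(t))=\tilde f(X(t),\bar{X}(t))$ with $\tilde f(x,y):=f(x-(y-b)\vee 0)$. Because $f$ is bounded and continuously differentiable, $\tilde f$ is a bounded bivariate function, jointly continuous and differentiable in its first argument with $\tfrac{\partial}{\partial x}\tilde f(x,y)=f'(x-(y-b)\vee 0)$; hence $\tilde f$ is an admissible test function in Theorem \ref{thm3}. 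Note that Corollary \ref{cor3.1} is not enough here, since $Y(t)$ genuinely depends on the pair $(X(t),\bar{X}(t))$ through the dividend process, so the bivariate version is really needed.

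With this choice the left-hand side of Theorem \ref{thm3}, applied to $\tilde f$ and $\xi=\xi_{b}$, is exactly $\int_{0}^{\infty}\mathrm{e}^{-q(t-r)}\mathbb{E}_{x}(f(Y(t));\,t<\kappa_{r}^{\xi_{b}}\wedge\tau_{a}^{+})\,\mathrm{d}t$, the quantity we want (and $\kappa_{r}^{\xi_{b}}$ is the Parisian ruin time of $Y$ as recorded above). So it remains to simplify the right-hand side. I would substitute $\overline{\xi}(w)=w\wedge b$ and $\xi(w)=(w-b)\vee 0$ throughout, and evaluate $\tilde f$ at the arguments occurring in Theorem \ref{thm3}. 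The key cancellation is that whenever a term carries the shift $\xi(w)$ inside $\tilde f$ it is exactly undone by the $-(y-b)\vee 0$ in the definition of $\tilde f$ at $y=w$; concretely $\tilde f(z+\xi(w)+X(r),w)=f(z+X(r))$ and $\tilde f(\xi(w)+X(r-s),w)=f(X(r-s))$, while $\tilde f(w+X(s),w)=f(w\wedge b+X(s))$, $\tfrac{\partial}{\partial x}\tilde f(w+X(s),w)=f'(w\wedge b+X(s))$ and $\tilde f(w+X(r),w)=f(w\wedge b+X(r))$. The prefactor $\exp(-\int_{x}^{w}\ell_{r}^{(q)\prime}(\overline{\xi}(z))/\ell_{r}^{(q)}(\overline{\xi}(z))\,\mathrm{d}z)$ becomes $\exp(-\int_{x}^{w}\ell_{r}^{(q)\prime}(z\wedge b)/\ell_{r}^{(q)}(z\wedge b)\,\mathrm{d}z)$, and the convolution integrals $\int_{0}^{\overline{\xi}(w)}W^{(q)}(\overline{\xi}(w)-z)(\cdots)\,\mathrm{d}z$ and $\int_{0}^{\overline{\xi}(w)}W^{(q)\prime}(\overline{\xi}(w)-z)(\cdots)\,\mathrm{d}z$ turn into integrals over $[0,w\wedge b]$. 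Assembling the pieces reproduces the displayed identity.

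I do not expect a genuine obstacle here: the analytic content is entirely contained in Theorem \ref{thm3}, and what remains is the choice $\tilde f(x,y)=f(x-(y-b)\vee 0)$ together with the routine bookkeeping above. The only points deserving a line of justification are that $\tilde f$ satisfies the hypotheses of Theorem \ref{thm3} (boundedness is clear; differentiability in the first variable follows from $f\in C^{1}$, with the stated partial derivative) and the term-by-term substitution of $\overline{\xi}(w)=w\wedge b$ and $\xi(w)=(w-b)\vee 0$, which is slightly lengthy but mechanical.
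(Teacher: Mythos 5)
Your proposal is correct and follows exactly the paper's own route: the paper proves this corollary in one line by substituting $\xi=\xi_{b}$ and the bivariate function $f(x-(y-b)\vee 0)$ into Theorem \ref{thm3}, which is precisely your choice of $\tilde f$, and your bookkeeping with $\overline{\xi}(w)=w\wedge b$, $\xi_{b}(w)=(w-b)\vee 0$ and the resulting cancellations is accurate. Nothing is missing; you have simply written out the substitutions the paper leaves implicit.
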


\begin{proof}
By replacing $\xi$ and $f(x,y)$ respectively with $\xi_{b}$ and $f(x-(y-b)\vee0)$ in Theorem \ref{thm3}, one obtains the desired result.
\end{proof}

\vspace{0.3cm}
The following result gives the joint Laplace transform involving the Parisian ruin time of $Y$.

\begin{cor}\label{}
For $q,\lambda\in[0,\infty)$, $a \in(-\infty,\infty)$ and $x\in(-\infty,a)$, we have
\begin{eqnarray}\label{}
\hspace{-0.3cm}&&\hspace{-0.3cm}
\mathbb{E}_{x}\left(\mathrm{e}^{-q \left(\kappa_{r}^{\xi_{b}}-r\right)}\mathrm{e}^{\lambda Y(\kappa_{r}^{\xi_{b}})}\mathbf{1}_{\{\kappa_{r}^{\xi_{b}}<\tau_{a}^{+}\}}\right)
\nonumber\\
\hspace{-0.3cm}&=&\hspace{-0.3cm}
\mathrm{e}^{\psi\left(\lambda\right)r} \int_{x}^{a}
\exp\left(-\int_{x}^{w} \frac{\ell^{(q)\prime}_{r}(z\wedge b)}{\ell_{r}^{(q)}(z\wedge b)}\,\mathrm{d}z\right)
\left[
\frac{\ell_{r}^{(q)\prime}(w\wedge b)}{\ell_{r}^{(q)}(w\wedge b)}
\left(\mathrm{e}^{\lambda (w\wedge b)}-\left(\psi(\lambda)-q\right)\right.\right.
\nonumber\\
\hspace{-0.3cm}&&\hspace{-0.3cm}
\times
\left.\left.\left(\mathrm{e}^{\lambda (w\wedge b)}\int_{0}^{w\wedge b}W^{(q)}(z)\mathrm{e}^{-\lambda z}\mathrm{d}z+\int_{0}^{r}
\mathrm{e}^{-\psi(\lambda) s}\,\ell_{s}^{(q)}(w\wedge b)\mathrm{d}s\right)\right)\right.
\nonumber\\
\hspace{-0.3cm}&&\hspace{-0.3cm}
\left.-
\lambda \mathrm{e}^{\lambda (w\wedge b)}+\left(\psi(\lambda)-q\right)
\left(\lambda \mathrm{e}^{\lambda (w\wedge b)}\int_{0}^{w\wedge b}W^{(q)}(z)\mathrm{e}^{-\lambda z}\mathrm{d}z
\right.\right.
\nonumber\\
\hspace{-0.3cm}&&\hspace{-0.3cm}
\left.\left.
+W^{(q)}(w\wedge b)+\int_{0}^{r}
\mathrm{e}^{-\psi(\lambda) s}\,\ell_{s}^{(q)\prime}(w\wedge b)\mathrm{d}s\right)\right]\,\mathrm{d}w.\nonumber
\end{eqnarray}
\end{cor}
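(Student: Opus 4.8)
The plan is to obtain this corollary as a direct specialization of Theorem \ref{thm2}, taking the draw-down function to be $\xi=\xi_{b}$ and choosing the bounded measurable test function in \eqref{joint.Lap.} appropriately. The starting point is a pathwise identity at the draw-down Parisian ruin time: by \eqref{Div.pro.} and the definition of $\xi_{b}$ we have $D(t)=\left(\bar{X}(t)-b\right)\vee 0=\xi_{b}(\bar{X}(t))$ for every $t\geq 0$, hence $Y(t)=X(t)-\xi_{b}(\bar{X}(t))$ for all $t$, and in particular $Y(\kappa_{r}^{\xi_{b}})=X(\kappa_{r}^{\xi_{b}})-\xi_{b}(\bar{X}(\kappa_{r}^{\xi_{b}}))$. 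Therefore $\mathrm{e}^{\lambda Y(\kappa_{r}^{\xi_{b}})}=\mathrm{e}^{\lambda X(\kappa_{r}^{\xi_{b}})}\,\mathrm{e}^{-\lambda\xi_{b}(\bar{X}(\kappa_{r}^{\xi_{b}}))}$.

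First I would insert the compensating factor $\mathrm{e}^{\psi(\lambda)r}\mathrm{e}^{-\psi(\lambda)r}$ and rewrite the quantity of interest as
\[
\mathbb{E}_{x}\left(\mathrm{e}^{-q(\kappa_{r}^{\xi_{b}}-r)}\mathrm{e}^{\lambda Y(\kappa_{r}^{\xi_{b}})}\mathbf{1}_{\{\kappa_{r}^{\xi_{b}}<\tau_{a}^{+}\}}\right)
=\mathrm{e}^{\psi(\lambda)r}\,\mathbb{E}_{x}\left(\mathrm{e}^{-q(\kappa_{r}^{\xi_{b}}-r)}\mathrm{e}^{\lambda X(\kappa_{r}^{\xi_{b}})-\psi(\lambda)r}\,\varphi(\bar{X}(\kappa_{r}^{\xi_{b}}))\,\mathbf{1}_{\{\kappa_{r}^{\xi_{b}}<\tau_{a}^{+}\}}\right),
\]
with $\varphi(w):=\mathrm{e}^{-\lambda\xi_{b}(w)}=\mathrm{e}^{-\lambda((w-b)\vee 0)}$. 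Since $\lambda\geq 0$, the function $\varphi$ is bounded by $1$ and measurable, hence admissible in Theorem \ref{thm2}. The only (very minor) point to check here is that $\xi_{b}$ is a genuine draw-down function on the relevant range, i.e.\ $\xi_{b}(z)=(z-b)\vee 0<z$ for $z>0$, which is clear.

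Next I would substitute this choice of $\xi$ and $\varphi$ into the right-hand side of \eqref{joint.Lap.}. Since $\overline{\xi_{b}}(z)=z-(z-b)\vee 0=z\wedge b$, every occurrence of $\overline{\xi}(\cdot)$ in \eqref{joint.Lap.} becomes $\cdot\wedge b$; and since $\mathrm{e}^{\lambda\xi_{b}(w)}\varphi(w)=\mathrm{e}^{\lambda((w-b)\vee 0)}\mathrm{e}^{-\lambda((w-b)\vee 0)}=1$, the prefactor $\mathrm{e}^{\lambda\xi(w)}\varphi(w)$ inside the integral drops out. Multiplying the resulting expression by the front factor $\mathrm{e}^{\psi(\lambda)r}$ produces exactly the formula claimed in the corollary.

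There is essentially no obstacle: the argument reduces entirely to the elementary pathwise identity $D(\kappa_{r}^{\xi_{b}})=\xi_{b}(\bar{X}(\kappa_{r}^{\xi_{b}}))$, the bookkeeping of the single exponential factor $\mathrm{e}^{\psi(\lambda)r}$, and the verification that $\varphi$ satisfies the boundedness and measurability hypotheses of Theorem \ref{thm2}. Accordingly, the proof can be written in a few lines, simply invoking Theorem \ref{thm2} with $\xi=\xi_{b}$ and $\varphi(\cdot)=\mathrm{e}^{-\lambda\xi_{b}(\cdot)}$.
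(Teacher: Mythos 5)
Your proposal is correct and is essentially the paper's own proof: the paper likewise obtains the corollary by specializing Theorem \ref{thm2} with $\xi=\xi_{b}$ and $\varphi(w)=\mathrm{e}^{-\lambda \xi_{b}(w)+\psi(\lambda)r}$, the only cosmetic difference being that you pull the constant factor $\mathrm{e}^{\psi(\lambda)r}$ out front instead of absorbing it into $\varphi$. Your pathwise identity $Y(\kappa_{r}^{\xi_{b}})=X(\kappa_{r}^{\xi_{b}})-\xi_{b}(\bar{X}(\kappa_{r}^{\xi_{b}}))$ and the simplifications $\overline{\xi_{b}}(w)=w\wedge b$, $\mathrm{e}^{\lambda\xi_{b}(w)}\varphi(w)=1$ are exactly the bookkeeping the paper leaves implicit.
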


\begin{proof}
Letting $\xi(w)=\xi_{b}(w)$ and $\varphi(w)=\mathrm{e}^{-\lambda \xi_{b}(w)+\psi(\lambda)r}$ in Theorem \ref{thm2}, one arrives at the desired result.
\end{proof}

\vspace{0.3cm}
The following result on the $k$-th moment of the discounted total dividends paid according to the barrier strategy with barrier $b$ until the Parisian ruin time for $Y$ is a direct consequence of Theorem \ref{thm3.4}. In particular, the corresponding result with $k=1$ recovers identity (20) in  Czarna and Palmowski (2014).

\begin{cor}\label{cor4.2}
For $b\in(0,\infty)$, $\xi=\xi_{b}$, $q\geq0$ and $k\geq1$, we have
 \begin{eqnarray} \label{momentsofD1}
&&{V_k^{\xi_{b}}}\left( x;b \right)
= k!\,\frac{{\ell_{r}^{\left( {kq } \right)}}\left(x\right)}{{\ell_{r}^{\left( {kq } \right)}}\left(b\right)}\prod\limits_{i=1}^{k}\frac{{{\ell_{r}^{\left( {iq } \right)}}\left( b \right)}}{{{\ell_{r}^{\left( iq \right)\prime}} \left( b \right)}},\quad x \in \left( {-\infty,b} \right],
\nonumber
%&&{V_k^{\xi_{b}}}\left( x;b \right)
%= \sum\limits_{i=0}^{k}C_{k}^{i}(x-b)^{i}
%(k-i)!\,\prod\limits_{j=1}^{k-i}\frac{{{\ell_{r}^{\left( {jq } \right)}}\left( b \right)}}{{{\ell_{r}^{\left( jq \right)\prime}} \left( b \right)}},\quad x \in \left( b,\infty \right),\nonumber
\end{eqnarray}
and
\begin{eqnarray}
V_k\left( x \right)
\hspace{-0.3cm}&=&\hspace{-0.3cm}
{\ell_{r}^{\left( {kq } \right)}}\left(x\right)
\left(
\int_{x}^{b}\frac{kV_{k-1}(z)}{{\ell_{r}^{\left( {kq } \right)}}\left(z\right)}\mathrm{d}z
+\frac{k!}{{\ell_{r}^{\left( kq \right)\prime}} \left(b\right)}\prod\limits_{i=1}^{k-1}\frac{{{\ell_{r}^{\left( {iq } \right)}}\left( b \right)}}{{{\ell_{r}^{\left( iq \right)\prime}} \left( b \right)}}\right)
,\quad x\in(-\infty,b],
\nonumber
%V_k\left( x \right)\hspace{-0.3cm}& =&\hspace{-0.3cm} k!\,\prod\limits_{i=1}^{k}\frac{{{\ell_{r}^{\left( {iq } \right)}}\left( b \right)}}{{{\ell_{r}^{\left( iq \right)\prime}} \left( b \right)}},\quad x\in[b,\infty).\nonumber
\end{eqnarray}
with $V_0\left( x \right)\equiv1$.
In particular, for  $k=1$ we have
\begin{eqnarray}
V_{1}^{\xi_{b}}(x;b)
\hspace{-0.3cm}&=&\hspace{-0.3cm}
\frac{\ell_{r}^{(q)}(x)}{\ell^{(q)\prime}_{r}(b)},\quad x\in(-\infty,b].
\nonumber
%V_{1}^{\xi_{b}}(x;b)\hspace{-0.3cm}&=&\hspace{-0.3cm}
%x-b
%+\frac{\ell_{r}^{(q)}(b)}{\ell^{(q)\prime}_{r}(b)},\quad x\in(b,\infty).\nonumber
\end{eqnarray}
\end{cor}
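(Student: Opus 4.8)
The plan is to derive Corollary \ref{cor4.2} as a direct specialization of Theorem \ref{thm3.4} to the draw-down function $\xi=\xi_b$, where $\xi_b(z)=(z-b)\vee 0$, so that $\overline{\xi}_b(z)=z-\xi_b(z)=z\wedge b$. First I would substitute $\overline{\xi}(w)=w\wedge b$ into the general formula of Theorem \ref{thm3.4} and observe that for $w\le b$ the integrand $\ell_r^{(kq)\prime}(w\wedge b)/\ell_r^{(kq)}(w\wedge b)=\ell_r^{(kq)\prime}(w)/\ell_r^{(kq)}(w)$ behaves as before, whereas for $w>b$ the ratio is the \emph{constant} $\ell_r^{(kq)\prime}(b)/\ell_r^{(kq)}(b)$. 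Splitting the exponent $\int_b^z$ accordingly, for $z\ge b$ we get
\[
\exp\left(-\int_b^z\frac{\ell_r^{(kq)\prime}(w\wedge b)}{\ell_r^{(kq)}(w\wedge b)}\,\mathrm{d}w\right)
=\exp\left(-(z-b)\,\frac{\ell_r^{(kq)\prime}(b)}{\ell_r^{(kq)}(b)}\right),
\]
so the $z$-integral in $V_k(b)=\int_b^\infty kV_{k-1}(z)\exp(\cdots)\,\mathrm{d}z$ becomes an exponential-weighted integral that telescopes once $V_{k-1}$ is known.

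The key step is then an induction on $k$. For $k=1$, $V_0\equiv 1$, so
\[
V_1(b)=\int_b^\infty \exp\left(-(z-b)\,\frac{\ell_r^{(q)\prime}(b)}{\ell_r^{(q)}(b)}\right)\mathrm{d}z
=\frac{\ell_r^{(q)}(b)}{\ell_r^{(q)\prime}(b)},
\]
and for $x\le b$, Theorem \ref{thm3.4} (or \eqref{newv.tw.si.} with discount $q$ replaced by $kq=q$) gives the multiplicative factor $\exp(-\int_x^b \ell_r^{(q)\prime}(\overline\xi(w))/\ell_r^{(q)}(\overline\xi(w))\,\mathrm{d}w)=\ell_r^{(q)}(x)/\ell_r^{(q)}(b)$, yielding $V_1^{\xi_b}(x;b)=\ell_r^{(q)}(x)/\ell_r^{(q)\prime}(b)$. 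For the inductive step I would assume the claimed product formula for $V_{k-1}(b)$, namely $V_{k-1}(b)=(k-1)!\prod_{i=1}^{k-1}\ell_r^{(iq)}(b)/\ell_r^{(iq)\prime}(b)$ (the $x=b$ case of the $(k-1)$-formula), substitute into $V_k(b)=\int_b^\infty kV_{k-1}(z)\exp(-(z-b)\ell_r^{(kq)\prime}(b)/\ell_r^{(kq)}(b))\,\mathrm{d}z$; since $V_{k-1}(z)$ for $z\ge b$ is \emph{constant} equal to $V_{k-1}(b)$ (because $\overline\xi_b(z)=b$ for all $z\ge b$, the defining integral formula for $V_{k-1}$ evaluated at $z\ge b$ collapses — this needs a short check using the same exponential computation at level $k-1$), the integral is $kV_{k-1}(b)\cdot\ell_r^{(kq)}(b)/\ell_r^{(kq)\prime}(b)$, which gives exactly $k!\prod_{i=1}^k \ell_r^{(iq)}(b)/\ell_r^{(iq)\prime}(b)$.

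Finally, for $x\le b$ I would again use the multiplicative structure from Theorem \ref{thm3.4}: $V_k^{\xi_b}(x;b)=\exp(-\int_x^b\ell_r^{(kq)\prime}(w\wedge b)/\ell_r^{(kq)}(w\wedge b)\,\mathrm{d}w)\cdot V_k(b)=(\ell_r^{(kq)}(x)/\ell_r^{(kq)}(b))\,V_k(b)$, producing the stated closed form. For the general expression of $V_k(x)$ on $x\in(-\infty,b]$ I would combine the interior integral $\int_x^b$ (where $V_{k-1}$ is genuinely varying) with the boundary contribution at $b$, writing $V_k(x)=\ell_r^{(kq)}(x)\bigl(\int_x^b kV_{k-1}(z)/\ell_r^{(kq)}(z)\,\mathrm{d}z + V_k(b)/\ell_r^{(kq)}(b)\bigr)$ and inserting the product formula for $V_k(b)$. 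The main obstacle I anticipate is the bookkeeping around the ``constant beyond $b$'' phenomenon — verifying carefully that $\overline\xi_b\equiv b$ on $[b,\infty)$ forces both $V_{k-1}(z)$ and the relevant exponential weights to be flat there, so that the improper integral over $[b,\infty)$ converges and telescopes cleanly rather than requiring a separate limiting argument; once that is pinned down, the rest is routine substitution and induction.
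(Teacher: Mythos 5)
Your proposal is correct and follows essentially the same route as the paper: specialize Theorem \ref{thm3.4} to $\xi=\xi_b$ (so $\bar\xi(w)=w\wedge b$), split the $z$-integral at $b$, evaluate the constant-rate exponential tail on $[b,\infty)$, and identify $\exp\bigl(-\int_x^b \ell_r^{(kq)\prime}/\ell_r^{(kq)}\bigr)=\ell_r^{(kq)}(x)/\ell_r^{(kq)}(b)$. The only difference is that you make explicit the induction on $k$ and the check that $V_{k-1}$ is constant on $[b,\infty)$, which the paper's two-line computation uses implicitly; your version is a slightly more detailed write-up of the same argument.
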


\begin{proof}
%For $x\in\left(b,\infty\right)$, by Theorem \ref{thm3.4} and recalling that $V_0\left( x \right)\equiv1$ we have
%\begin{align}
%V_1\left( x \right) =& \int_{x}^{\infty}\exp \left( { - \int_x^z {\frac{{{\ell_{r}^{\left( q \right)\prime}} \left( w\wedge b \right)}}{{{\ell_{r}^{\left( {q } \right)}}\left(w\wedge b \right)}}} \mathrm{d}w} \right)\mathrm{d}z
%=1!\,\frac{{{\ell_{r}^{\left( {q } \right)}}\left( b \right)}}{{{\ell_{r}^{\left( q \right)\prime}} \left( b \right)}}
%,\quad x\in(b,\infty),\nonumber
%\end{align}
%which together with the expression for $V_2$ in Theorem \ref{thm3.4} yields
%$$V_{2}(x)=\int_{x}^{\infty}2V_{1}(z)\exp \left( { - \int_x^z {\frac{{{\ell_{r}^{\left( 2q \right)\prime}} \left( w\wedge b \right)}}{{{\ell_{r}^{\left( {2q } \right)}}\left(w\wedge b \right)}}} \mathrm{d}w} \right)\mathrm{d}z
%=2!\,\prod\limits_{i=1}^{2}\frac{{{\ell_{r}^{\left( {iq } \right)}}\left( b \right)}}{{{\ell_{r}^{\left( iq \right)\prime}} \left( b \right)}},\quad x\in(b,\infty).$$
%Recursively using the same argument as above, we can obtain
%\begin{align}
%V_k\left( x \right) = k!\,\prod\limits_{i=1}^{k}\frac{{{\ell_{r}^{\left( {iq } \right)}}\left( b \right)}}{{{\ell_{r}^{\left( iq \right)\prime}} \left( b \right)}},\quad x\in(b,\infty), \,k\geq1.
%\nonumber
%\end{align}
For $x\in(-\infty,b]$, we have
\begin{align}
V_k\left( x \right) =& \int_{x}^{b}kV_{k-1}(z)\exp \left( { - \int_x^z {\frac{{{\ell_{r}^{\left( kq \right)\prime}} \left( w \right)}}{{{\ell_{r}^{\left( {kq } \right)}}\left( w \right)}}} \mathrm{d}w} \right)\mathrm{d}z
\nonumber\\
&+\exp \left( { - \int_x^b {\frac{{{\ell_{r}^{\left( kq \right)\prime}} \left( w \right)}}{{{\ell_{r}^{\left( {kq } \right)}}\left( w \right)}}} \mathrm{d}w} \right)
\int_{b}^{\infty}kV_{k-1}(z)\exp \left( { - \int_b^z {\frac{{{\ell_{r}^{\left( kq \right)\prime}} \left( b \right)}}{{{\ell_{r}^{\left( {kq } \right)}}\left( b \right)}}} \mathrm{d}w} \right)\mathrm{d}z
\nonumber\\
=&{\ell_{r}^{\left( {kq } \right)}}\left(x\right)
\left(
\int_{x}^{b}\frac{kV_{k-1}(z)}{{\ell_{r}^{\left( {kq } \right)}}\left(z\right)}\mathrm{d}z
+\frac{k!}{{\ell_{r}^{\left( kq \right)\prime}} \left(b\right)}\prod\limits_{i=1}^{k-1}\frac{{{\ell_{r}^{\left( {iq } \right)}}\left( b \right)}}{{{\ell_{r}^{\left( iq \right)\prime}} \left( b \right)}}\right)
,\quad x\in(-\infty,b].\nonumber
\end{align}
The proof is thus complete.
\end{proof}

\vspace{0.3cm}
Put
$$D_{\tau_{\xi_{b}}}:=\int_{0}^{\tau_{\xi_{b}}}\mathrm{e}^{-qt}\mathrm{d}D(t),$$
represents the present value of the accumulated dividends paid until the time of ruin for $Y$. In addition, for each $k\geq1$, we also introduce the $k$-th moment of $D_{\tau_{\xi_{b}}}$ as
$$U_{k}(x;b):=\mathbb{E}_{x}\left(\left[D_{\tau_{\xi_{b}}}\right]^{k}\right).$$

The following result recovers Proposition 1 and Proposition 2 in Renaud and Zhou (2007).

\begin{cor}\label{cor4.3}
For $b\in(0,\infty)$, $q\geq0$ and $k\geq1$, we have
 \begin{eqnarray} \label{momentsofD1}
&&U_{k}(x;b)
= k!\,\frac{{W^{\left( {kq } \right)}}\left(x\right)}{{W^{\left( {kq } \right)}}\left(b\right)}\prod\limits_{i=1}^{k}\frac{{{W^{\left( {iq } \right)}}\left( b \right)}}{{{W^{\left( iq \right)\prime}} \left( b \right)}},\quad x \in \left( {-\infty,b} \right].
\nonumber
%&&U_{k}(x;b)
%= \sum\limits_{i=0}^{k}C_{k}^{i}(x-b)^{i}
%(k-i)!\,\prod\limits_{j=1}^{k-i}\frac{{{W^{\left( {jq } \right)}}\left( b \right)}}{{{W^{\left( jq \right)\prime}} \left( b \right)}},\quad x \in \left( b,\infty \right).\nonumber
\end{eqnarray}
In particular, when $k=1$ we have
\begin{eqnarray}
U_{1}(x;b)
\hspace{-0.3cm}&=&\hspace{-0.3cm}
\frac{W^{(q)}(x)}{W^{(q)\prime}(b)},\quad x\in(-\infty,b].
\nonumber
%U_{1}(x;b)\hspace{-0.3cm}&=&\hspace{-0.3cm}
%x-b
%+\frac{W^{(q)}(b)}{W^{(q)\prime}(b)},\quad x\in(b,\infty).\nonumber
\end{eqnarray}
\end{cor}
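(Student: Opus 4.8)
The plan is to deduce Corollary~\ref{cor4.3} from Corollary~\ref{cor4.2} by letting the Parisian delay $r$ decrease to $0$: the ruin time $\tau_{\xi_b}$ of $Y$ is the $r\downarrow0$ limit of the Parisian ruin time $\kappa_r^{\xi_b}$, and after a common normalization the kernels $\ell_r^{(p)}$ converge to the scale functions $W^{(p)}$ as $r\downarrow0$. Concretely, two ingredients are needed: a probabilistic convergence $V_k^{\xi_b}(x;b)\to U_k(x;b)$, and the small-$r$ asymptotics of the $\ell_r^{(p)}$'s.

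First I would record the probabilistic limit. The map $r\mapsto\kappa_r^{\xi_b}$ is nondecreasing and satisfies $\kappa_r^{\xi_b}\ge\tau_{\xi_b}$; moreover $\kappa_r^{\xi_b}\downarrow\tau_{\xi_b}$ as $r\downarrow0$, since just after $\tau_{\xi_b}$ the running supremum $\bar X$ stays frozen, so locally $\{t>\tau_{\xi_b}:X(t)<\xi_b(\bar X(t))\}$ is the set where $X$ lies below a fixed level, namely a union of excursion intervals of positive length accumulating at $\tau_{\xi_b}$; hence for every $\varepsilon>0$ one of these intervals has length exceeding $r$ once $r$ is small, forcing $\kappa_r^{\xi_b}<\tau_{\xi_b}+\varepsilon$. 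As $D(t)=(\bar X(t)-b)\vee0$ is continuous and nondecreasing, it follows that $\int_{0-}^{\kappa_r^{\xi_b}}\mathrm{e}^{-qt}\,\mathrm{d}D(t)$ decreases to $\int_{0-}^{\tau_{\xi_b}}\mathrm{e}^{-qt}\,\mathrm{d}D(t)=D_{\tau_{\xi_b}}$. Since Corollary~\ref{cor4.2} shows that for a fixed $r_0>0$ the $k$-th moment of $\int_{0-}^{\kappa_{r_0}^{\xi_b}}\mathrm{e}^{-qt}\,\mathrm{d}D(t)$ is finite, providing a majorant for this monotone family, monotone convergence gives $V_k^{\xi_b}(x;b)\downarrow U_k(x;b)$ for $x\in(-\infty,b]$.

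Next I would pin down the small-$r$ behaviour of the kernels. Put $c(r):=\frac{1}{r}\int_{(0,\infty)}z\,\mathbb{P}(X(r)\in\mathrm{d}z)=\mathbb{E}(X(r)^{+})/r$, so that $\ell_r^{(p)}(x)=c(r)\int_{(0,\infty)}W^{(p)}(x+z)\,\mu_r(\mathrm{d}z)$, where $\mu_r$ is the size-biased law of $X(r)$ on $(0,\infty)$; differentiating under the integral sign gives the same identity for $\ell_r^{(p)\prime}$ with $W^{(p)\prime}$ in place of $W^{(p)}$. One verifies that $\mu_r\Rightarrow\delta_0$ as $r\downarrow0$: in the bounded-variation case $X(r)^{+}\le\delta r$ (with $\delta$ the drift) so the support itself collapses to $0$, while in the unbounded-variation case $\mu_r\big((\varepsilon,\infty)\big)=\mathbb{E}(X(r)^{+};X(r)>\varepsilon)/\mathbb{E}(X(r)^{+})\to0$ because the family $\{X(r)^{+}\}_{r\le1}$ is uniformly integrable (a spectrally negative L\'evy process has no positive jumps, so $X(r)^{+}\le\bar X(1)\in L^{1}$) while $\mathbb{E}(X(r)^{+})/r\to\infty$. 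By the continuity and local boundedness of $W^{(p)}$ and $W^{(p)\prime}$ on $(0,\infty)$ (the smoothness recalled at the end of Section~2) this yields $\ell_r^{(p)}(y)/c(r)\to W^{(p)}(y)$ and $\ell_r^{(p)\prime}(y)/c(r)\to W^{(p)\prime}(y)$ for every $p\ge0$ and $y>0$, and $\ell_r^{(p)}(x)/c(r)\to0=W^{(p)}(x)$ for $x<0$.

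It then remains to pass to the limit in the identity of Corollary~\ref{cor4.2}. The normalizing factor $c(r)$ is independent of the index, hence cancels in each of the ratios $\ell_r^{(kq)}(x)/\ell_r^{(kq)}(b)$ and $\ell_r^{(iq)}(b)/\ell_r^{(iq)\prime}(b)$, so letting $r\downarrow0$ we obtain
\[
\lim_{r\downarrow0}V_k^{\xi_b}(x;b)=k!\,\frac{W^{(kq)}(x)}{W^{(kq)}(b)}\prod_{i=1}^{k}\frac{W^{(iq)}(b)}{W^{(iq)\prime}(b)} .
\]
Combined with the first ingredient, which identifies the left-hand side as $U_k(x;b)$, this is the claimed formula, and the case $k=1$ reads $U_1(x;b)=W^{(q)}(x)/W^{(q)\prime}(b)$. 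I expect the one genuinely delicate point to be the weak convergence $\mu_r\Rightarrow\delta_0$, i.e.\ the lower bound $\mathbb{E}(X(r)^{+})\gg r$ in the unbounded-variation regime. An alternative that avoids all limiting arguments is to repeat verbatim the $o(\varepsilon)$/binomial/differential-equation scheme of the proof of Theorem~\ref{thm3.4} with $\tau_{\xi_b}$ replacing $\kappa_r^{\xi_b}$ and the elementary draw-down exit identity~\eqref{dra.d.t.s.} replacing~\eqref{newv.tw.si.}; since $\overline{\xi_b}(w)=w\wedge b$ equals $b$ for $w\ge b$, this produces the analogue of the recursion~\eqref{diff.eq.} with $W^{(kq)}$ in place of $\ell_r^{(kq)}$, whose solution under the same boundary conditions as in Theorem~\ref{thm3.4} gives the stated expression.
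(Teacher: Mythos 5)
Your skeleton is the paper's: identify $U_k(x;b)$ as $\lim_{r\downarrow0}V_k^{\xi_b}(x;b)$ via $\kappa_r^{\xi_b}\downarrow\tau_{\xi_b}$, then let $r\downarrow0$ in Corollary \ref{cor4.2}. The first ingredient is fine (the paper asserts the same a.s.\ limit, and your monotone/dominated convergence step is routine). The gap is in your second ingredient, the small-$r$ asymptotics of $\ell_r^{(p)}$. In the unbounded-variation regime your proof of $\mu_r\Rightarrow\delta_0$ does not close: uniform integrability only gives that the numerator $\mathbb{E}(X(r)^+;X(r)>\varepsilon)$ tends to $0$, but the denominator $\mathbb{E}(X(r)^+)$ tends to $0$ as well, so nothing follows about the ratio. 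To repair it you would need a quantitative tail bound, e.g.\ $\mathbb{E}(X(r)^+;X(r)>\varepsilon)\le \|\bar X(1)\|_2\,\mathbb{P}(\tau_\varepsilon^+\le r)^{1/2}$ with $\mathbb{P}(\tau_\varepsilon^+\le r)\le \mathrm{e}^{qr-\Phi_q\varepsilon}$ for all $q>0$ (superpolynomially small in $r$), together with a polynomial-in-$r$ lower bound on $\mathbb{E}(X(r)^+)$. That lower bound is exactly your asserted $\mathbb{E}(X(r)^+)/r\to\infty$, which you flag as delicate but do not prove, and which does not follow from anything quoted in the paper (when $\psi'(0+)\le 0$ the trivial bound $\mathbb{E}(X(r)^+)\ge\mathbb{E}X(r)$ gives nothing). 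Similar estimates are also needed to dominate the exponentially growing integrand $W^{(p)}(y+z)$ when passing from $\mu_r\Rightarrow\delta_0$ to $\ell_r^{(p)}(y)/c(r)\to W^{(p)}(y)$. So the main route, as written, is incomplete precisely at its load-bearing step.

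The paper sidesteps all of this: it never needs asymptotics of $\ell_r^{(p)}$ itself, only of the two ratios appearing in Corollary \ref{cor4.2}, and it obtains these probabilistically. By \eqref{newv.tw.si.} with $\xi=\xi_b$ one has $\ell_r^{(q)}(x)/\ell_r^{(q)}(b)=\mathbb{E}_x\big(\mathrm{e}^{-q\tau_b^+}\mathbf{1}_{\{\tau_b^+<\kappa_r^{\xi_b}\}}\big)$, so $\kappa_r^{\xi_b}\downarrow\tau_{\xi_b}$, bounded convergence and \eqref{t.s.0} give $\ell_r^{(q)}(x)/\ell_r^{(q)}(b)\to W^{(q)}(x)/W^{(q)}(b)$; applying the same identity with an upper level $a>b$, where it reads $\frac{\ell_r^{(q)}(x)}{\ell_r^{(q)}(b)}\exp\big(-\frac{\ell_r^{(q)\prime}(b)}{\ell_r^{(q)}(b)}(a-b)\big)$, and comparing with the draw-down identity \eqref{dra.d.t.s.}, whose right-hand side is $\frac{W^{(q)}(x)}{W^{(q)}(b)}\exp\big(-\frac{W^{(q)\prime}(b)}{W^{(q)}(b)}(a-b)\big)$, yields $\ell_r^{(q)\prime}(b)/\ell_r^{(q)}(b)\to W^{(q)\prime}(b)/W^{(q)}(b)$. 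Substituting these two limits into Corollary \ref{cor4.2} finishes the proof with no size-biasing or Tauberian input. Either adopt this argument for your second ingredient, or carry out in full your closing alternative (rerunning the scheme of Theorem \ref{thm3.4} with $\tau_{\xi_b}$ and \eqref{dra.d.t.s.} in place of $\kappa_r^{\xi}$ and \eqref{newv.tw.si.}), which would also work but is currently only a one-sentence outline.
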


\begin{proof}
Note that $\tau_{\xi_{b}}=\lim\limits_{r\rightarrow0}\kappa_{r}^{\xi_{b}}$ with probability 1. Note from \eqref{newv.tw.si.} that
\begin{eqnarray}\label{1}
\mathbb{E}_{x}\left(\mathrm{e}^{-q \tau_{b}^{+}}\mathbf{1}_{\{\tau_{b}^{+}<\kappa_{r}^{\xi_{b}}\}}\right)
=\exp\left(-\int_{x}^{b} \frac{\ell^{(q)\prime}_{r}(w)}{\ell_{r}^{(q)}(w)}\,\mathrm{d}w\right)
= \frac{\ell^{(q)}_{r}(x)}{\ell_{r}^{(q)}(b)},\quad x\in(-\infty,b],
\end{eqnarray}
and
\begin{eqnarray}\label{1'}
\mathbb{E}_{x}\left(\mathrm{e}^{-q \tau_{a}^{+}}\mathbf{1}_{\{\tau_{a}^{+}<\kappa_{r}^{\xi_{b}}\}}\right)
\hspace{-0.3cm}&=&\hspace{-0.3cm}\exp\left(-\int_{x}^{b} \frac{\ell^{(q)\prime}_{r}(w)}{\ell_{r}^{(q)}(w)}\,\mathrm{d}w\right)
\exp\left(-\int_{b}^{a} \frac{\ell^{(q)\prime}_{r}(b)}{\ell_{r}^{(q)}(b)}\,\mathrm{d}w\right)
\nonumber\\
\hspace{-0.3cm}&=&\hspace{-0.3cm}
\frac{\ell^{(q)}_{r}(x)}{\ell_{r}^{(q)}(b)}
\,\mathrm{e}^{-\frac{\ell^{(q)\prime}_{r}(b)}{\ell_{r}^{(q)}(b)}(a-b)},\quad x\in(-\infty,b], a\in(b,\infty).
\end{eqnarray}
By \eqref{1} together with \eqref{t.s.0}, we have
\begin{eqnarray}\label{2}
\lim\limits_{r\rightarrow0}\frac{\ell^{(q)}_{r}(x)}{\ell_{r}^{(q)}(b)}
\hspace{-0.3cm}&=&\hspace{-0.3cm}
\lim\limits_{r\rightarrow0}\mathbb{E}_{x}\left(\mathrm{e}^{-q \tau_{b}^{+}}\mathbf{1}_{\{\tau_{b}^{+}<\kappa_{r}^{\xi_{b}}\}}\right)
\nonumber\\
\hspace{-0.3cm}&=&\hspace{-0.3cm}\mathbb{E}_{x}\left(\mathrm{e}^{-q \tau_{b}^{+}}\mathbf{1}_{\{\tau_{b}^{+}<\tau_{\xi_{b}}\}}\right)
=\mathbb{E}_{x}\left(\mathrm{e}^{-q \tau_{b}^{+}}\mathbf{1}_{\{\tau_{b}^{+}<\tau_{0}^{-}\}}\right)
= \frac{W^{(q)}(x)}{W^{(q)}(b)}, \quad x\in(-\infty,b].
\end{eqnarray}
By \eqref{dra.d.t.s.} we have
\begin{eqnarray}\label{3}
\mathbb{E}_{x}\left(\mathrm{e}^{-q \tau_{a}^{+}}\mathbf{1}_{\{\tau_{a}^{+}<\tau_{\xi_{b}}\}}\right)
\hspace{-0.3cm}&=&\hspace{-0.3cm}
\exp\left(-\int_{x}^{a} \frac{W^{(q)\prime}(\bar{\xi}_{b}(w))}{W^{(q)}(\bar{\xi}_{b}(w))}\,\mathrm{d}w\right)
\nonumber\\
\hspace{-0.3cm}&=&\hspace{-0.3cm}
\frac{W^{(q)}(x)}{W^{(q)}(b)}
\,\mathrm{e}^{-\frac{W^{(q)\prime}(b)}{W^{(q)}(b)}(a-b)},\quad x\in(-\infty,b], a\in(b,\infty).
\end{eqnarray}
Combining \eqref{1'}, \eqref{2} and \eqref{3}
we further arrive at
\begin{eqnarray}\label{2'}
\lim\limits_{r\rightarrow0}\frac{\ell^{(q)\prime}_{r}(b)}{\ell_{r}^{(q)}(b)}
=\frac{W^{(q)\prime}(b)}{W^{(q)}(b)}.
\end{eqnarray}
The desired results follow from a combination of \eqref{2}, \eqref{2'} and Corollary \ref{cor4.2}.
 \end{proof}

\vspace{0.3cm}
If we choose $\xi$ such that $\xi\equiv0$, then the draw-down Parisian ruin time $\kappa_{r}^{\xi}$ of $X$ degenerates to the Parisian ruin time $\kappa_{r}$ of $X$. The following result gives a generalized version of potential measure for the Process $X$ killed upon up-crossing level $a\,(\geq x)$ or the Parisian ruin time of $X$.

\begin{cor}\label{cor4.4}
For $\xi\equiv0$, $q,\lambda\geq0$, $r>0$, $a\geq x$ and bounded bivariate function $f(x,y)$ which is differentiable with respect to $x$, we have
\begin{eqnarray}\label{pot.mea..1''}
\hspace{-0.3cm}&&\hspace{-0.3cm}
\int_{0}^{\infty}\mathrm{e}^{-q \left(t-r\right)}
\mathbb{E}_{x}\left(f(X(t),\bar{X}(t)); \,t<\kappa_{r}\wedge \tau_{a}^{+}\right)
\mathrm{d}t
\nonumber\\
\hspace{-0.3cm}&=&\hspace{-0.3cm}
\int_{x}^{a}
\frac{\ell^{(q)}_{r}(x)}{\ell_{r}^{(q)}(w)}
\left[\frac{\ell_{r}^{(q)\prime}(w)}{\ell_{r}^{(q)}(w)}
\left(\int_{0}^{r}\mathrm{e}^{q(r-s)}\mathbb{E}_{}\left(f(w+X(s),w)\right)\mathrm{d}s
\right.\right.
\nonumber\\
\hspace{-0.3cm}&&\hspace{-0.3cm}
\left.-\int_{0}^{w}W^{(q)}(w-z)
\mathbb{E}_{}\left(f(z+X(r),w)\right)\mathrm{d}z
\right.
\nonumber\\
\hspace{-0.3cm}&&\hspace{-0.3cm}
\left.-\int_{0}^{r}\mathbb{E}_{}\left(f(X(r-s),w)\right)\ell_{s}^{(q)}(w)\mathrm{d}s\right)
\nonumber\\
\hspace{-0.3cm}&&\hspace{-0.3cm}
-
\int_{0}^{r}\mathrm{e}^{q(r-s)}\mathbb{E}_{}\left(\frac{\partial}{\partial x}f(w+X(s),w)\right)
\mathrm{d}s
+\int_{0}^{w}W^{(q)\prime}(w-z)\mathbb{E}_{}\left(f(z+X(r),w)\right)\mathrm{d}z
\nonumber\\
\hspace{-0.3cm}&&\hspace{-0.3cm}
\left.\left.+W^{(q)}(0+)\,\mathbb{E}_{}\left(f(w+X(r),w)\right)
+\int_{0}^{r}\mathbb{E}_{}\left(f(X(r-s),w)\right)\ell_{s}^{(q)\prime}(w)\mathrm{d}s\right)\right]
\,\mathrm{d}w.\nonumber
\end{eqnarray}
\end{cor}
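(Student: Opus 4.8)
The plan is to derive Corollary~\ref{cor4.4} directly from Theorem~\ref{thm3} by specializing to the trivial draw-down function $\xi\equiv0$. First I would record the elementary identities that hold in this case: $\overline{\xi}(w)=w-\xi(w)=w$ and $\xi(w)=0$ for every $w$, and, as already observed just before the statement, the $\xi$-draw-down Parisian ruin time $\kappa_r^{\xi}$ then coincides with the classical Parisian ruin time $\kappa_r$. Hence the left-hand side of Theorem~\ref{thm3} for this choice of $\xi$ is precisely the bivariate potential measure appearing on the left-hand side of the displayed identity in Corollary~\ref{cor4.4}.

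The only genuine simplification to carry out is that of the exponential prefactor. Since $\ell_r^{(q)}$ is continuously differentiable and non-vanishing on the interval of integration (the differentiability being inherited from $W^{(q)}$ as recalled in Section~2), the function $\log\ell_r^{(q)}$ is an antiderivative of $\ell_r^{(q)\prime}/\ell_r^{(q)}$, so that
\[
\exp\!\left(-\int_x^w \frac{\ell_r^{(q)\prime}(\overline{\xi}(z))}{\ell_r^{(q)}(\overline{\xi}(z))}\,\mathrm{d}z\right)
=\exp\!\left(-\int_x^w \frac{\ell_r^{(q)\prime}(z)}{\ell_r^{(q)}(z)}\,\mathrm{d}z\right)
=\frac{\ell_r^{(q)}(x)}{\ell_r^{(q)}(w)}.
\]
I would then substitute $\overline{\xi}(w)=w$, $\xi(w)=0$, and this last ratio into the right-hand side of the formula of Theorem~\ref{thm3}: every occurrence of $\overline{\xi}(w)$ becomes $w$, every $z+\xi(w)$ inside an expectation collapses to $z$, the upper limits $\overline{\xi}(w)$ of the $W^{(q)}$-convolution integrals become $w$, and the exponential factor is replaced by $\ell_r^{(q)}(x)/\ell_r^{(q)}(w)$. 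Collecting the resulting terms gives exactly the displayed identity of Corollary~\ref{cor4.4}.

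There is no substantial obstacle here beyond careful bookkeeping; this is a genuine corollary rather than a new result. The one point that deserves a sentence of justification is the validity of the antiderivative computation above, i.e.\ that $\ell_r^{(q)}$ is $C^1$ and strictly positive on $[\min(x,w),\max(x,w)]$ for the values of concern — these properties are inherited from the corresponding properties of $W^{(q)}$ discussed in Section~2 and are already implicit in the hypotheses under which Theorem~\ref{thm3} was proved.
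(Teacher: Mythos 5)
Your proposal is correct and follows exactly the paper's route: the paper also obtains Corollary~\ref{cor4.4} as a direct specialization of Theorem~\ref{thm3} to $\xi\equiv0$, so that $\overline{\xi}(w)=w$, $\xi(w)=0$, $\kappa_r^{\xi}=\kappa_r$, and the exponential prefactor collapses to $\ell_r^{(q)}(x)/\ell_r^{(q)}(w)$. Your added justification of this last step via the logarithmic derivative of $\ell_r^{(q)}$ is exactly the bookkeeping the paper leaves implicit.
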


\begin{proof}
It is a direct consequence of Theorem \ref{thm3}.
\end{proof}

The following result gives the solution for the $k$-th moment of the accumulated discounted dividend payout until the Parisian ruin time $\kappa_{r}$ for $X$.

\begin{cor}\label{cor4.5}
For $\xi\equiv0$, $q\geq0$ and $k\geq1$, we have,
 \begin{eqnarray} \label{momentsofD1}
&&{V_k^{\xi}}\left( x;b \right)
= {{\ell_{r}^{\left( kq \right)}} \left( {x} \right)}\int_{b}^{\infty}\frac{kV_{k-1}(z)}{{{\ell_{r}^{\left( {kq } \right)}}\left( {z} \right)}}\mathrm{d}z,\quad x \in \left( {-\infty,b} \right],
\nonumber
%&&{V_k^{\xi}}\left( x;b \right)
%= \sum\limits_{i=0}^{k}C_{k}^{i}(x-b)^{i}{{\ell_{r}^{\left( (k-i)q \right)}} \left( {x} \right)}\int_{x}^{\infty}
%\frac{(k-i)V_{k-i-1}(z)}{{{\ell_{r}^{\left( (k-i)q \right)}} \left( {z} \right)}}
%\mathrm{d}z,\quad x \in \left( b,\infty \right),\nonumber
\end{eqnarray}
where
\begin{align}
V_k\left( x \right) ={{\ell_{r}^{\left( kq \right)}} \left( {x} \right)} \int_{x}^{\infty}\frac{kV_{k-1}(z)}{{{\ell_{r}^{\left( {kq } \right)}}\left( {z} \right)}}\mathrm{d}z,\quad x\in(-\infty,\infty),\nonumber
\end{align}
with $V_0\left( x \right)\equiv1$.
\end{cor}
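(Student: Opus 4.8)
The plan is simply to specialize Theorem \ref{thm3.4} to the degenerate draw-down function $\xi\equiv0$. The first observation is that when $\xi\equiv0$ one has $\overline{\xi}(w)=w-\xi(w)=w$ for every $w$, so in both displayed formulas of Theorem \ref{thm3.4} the integrand inside the exponential reduces to $\ell_{r}^{(kq)\prime}(w)/\ell_{r}^{(kq)}(w)$, i.e.\ the logarithmic derivative of $w\mapsto\ell_{r}^{(kq)}(w)$.

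Next I would evaluate the exponential factor explicitly. Since $\ell_{r}^{(kq)}$ is strictly positive and, by the differentiability it inherits from $W^{(kq)}$ via \eqref{t.s.limit}, absolutely continuous, for $x\leq z$ one has
\begin{equation}
\exp\left(-\int_{x}^{z}\frac{\ell_{r}^{(kq)\prime}(w)}{\ell_{r}^{(kq)}(w)}\,\mathrm{d}w\right)
=\exp\left(-\bigl[\ln\ell_{r}^{(kq)}(w)\bigr]_{w=x}^{w=z}\right)
=\frac{\ell_{r}^{(kq)}(x)}{\ell_{r}^{(kq)}(z)}.
\end{equation}
Substituting this into the formula of Theorem \ref{thm3.4} and pulling the factor $\ell_{r}^{(kq)}(x)$, which is free of the integration variable $z$, out of the integral gives
\begin{equation}
V_{k}^{\xi}(x;b)=\ell_{r}^{(kq)}(x)\int_{b}^{\infty}\frac{kV_{k-1}(z)}{\ell_{r}^{(kq)}(z)}\,\mathrm{d}z,\qquad x\in(-\infty,b],
\end{equation}
and, taking $b=x$, the companion identity for $V_{k}(x)=V_{k}^{\xi}(x;x)$. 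The normalization $V_{0}\equiv1$ is just the convention $\left[D_{b}\right]^{0}=1$, and together with the above recursion it determines every $V_{k}$ inductively.

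Since each step is an algebraic rearrangement of an identity already proved in Theorem \ref{thm3.4}, there is no genuine obstacle here. The only points deserving a remark are that $\ell_{r}^{(kq)}$ is positive and regular enough for the logarithmic-derivative computation above to be legitimate, and that the recursion bottoms out at the explicit base case $V_{0}\equiv1$, so the displayed formulas are well defined for all $k\geq1$.
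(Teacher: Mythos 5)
Your proposal is correct and follows exactly the paper's route: the paper proves this corollary by the same direct specialization of Theorem \ref{thm3.4} to $\xi\equiv0$, with the exponential of the logarithmic-derivative integral collapsing to the ratio $\ell_{r}^{(kq)}(x)/\ell_{r}^{(kq)}(z)$ just as you compute. Your write-up merely makes explicit the routine step the paper leaves implicit, so there is nothing to add.
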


\begin{proof}
The desired result is a direct application of Theorem \ref{thm3.4} by letting $\xi\equiv0$.
\end{proof}

\bigskip

{\bf Acknowledgement}
Wenyuan Wang thanks Concordia University where the work on this paper was  completed during his visit.

\end{document}